\numberwithin{equation}{section}
\def\@cite#1#2{{\m@th\upshape\bfseries%
[{#1\if@tempswa{\m@th\upshape\mdseries, #2}\fi}]}}
\theoremstyle{plain}
\newtheorem{theorem}{Theorem}[section]
\newtheorem{corollary}[theorem]{Corollary}
\newtheorem{proposition}[theorem]{Proposition}
\theoremstyle{definition}
\newtheorem{definition}[theorem]{Definition}
\newtheorem{example}[theorem]{Example}
\newtheorem{examples}[theorem]{Examples}
\newtheorem{remark}[theorem]{Remark}
\newtheorem{question}[theorem]{Question}
\newtheorem*{acknow}{Acknowledgements}
\theoremstyle{remark}
  \newcommand{\A}{{\mathcal{A}}}
  \newcommand{\B}{{\mathcal{B}}}
  \newcommand{\F}{{\mathcal{F}}}
  \newcommand{\I}{{\mathcal{I}}}
  \newcommand{\J}{{\mathcal{J}}}
  \newcommand{\K}{{\mathcal{K}}}
\renewcommand{\L}{{\mathcal{L}}}
  \newcommand{\N}{{\mathcal{N}}}
\renewcommand{\O}{{\mathcal{O}}}
\renewcommand{\S}{{\mathcal{S}}}
  \newcommand{\T}{{\mathcal{T}}}
\def\al{\alpha}
\def\be{\beta}
\def\ga{\gamma}
\def\de{\delta}
\def\ka{\kappa}
\def\la{\lambda}
\def\La{\Lambda}
\def\Om{\Omega}
\def\si{\sigma}
\def\Si{\Sigma}
\newcommand{\bC}{\mathbb{C}}
\newcommand{\bF}{\mathbb{F}}
\newcommand{\bN}{\mathbb{N}}
\newcommand{\bT}{\mathbb{T}}
\newcommand{\bZ}{\mathbb{Z}}
\newcommand{\fI}{{\mathfrak{I}}}
\newcommand{\fL}{{\mathfrak{L}}}
\newcommand{\Bi}{{\mathbf{i}}}
\newcommand{\AND}{\text{ and }}
\newcommand{\foral}{\text{ for all }}
\newcommand{\qand}{\quad\text{and}\quad}
\newcommand{\qiff}{\quad\text{if and only if}\quad}
\newcommand{\qfor}{\quad\text{for}\; }
\newcommand{\ca}{\mathrm{C}^*}
\newcommand{\ol}{\overline}
\newcommand{\wh}{\widehat}
\newcommand{\Aut}{\operatorname{Aut}}
\newcommand{\End}{\operatorname{End}}
\newcommand{\id}{{\operatorname{id}}}
\newcommand{\mt}{\emptyset}
\newcommand{\spn}{\operatorname{span}}
\newcommand{\sumoplus}{\operatornamewithlimits{\sum \strut^\oplus}}
\newcommand{\supp}{\operatorname{supp}}
\newcommand{\sca}[1]{\left\langle#1\right\rangle} %\sca{a,b} =<a,b>
\newcommand{\nor}[1]{\left\Vert #1\right\Vert} %\nor{x}=||x||
\newcommand{\bo}[1]{\mathbf{#1}} %bold math symbols
\newcommand{\un}[1]{{\underline{#1}}} %underline math symbols
\newcommand{\umu}{\underline\mu}
\newcommand{\unu}{\underline\nu}
\newcommand{\uw}{\underline{w}}
\newcommand{\uq}{\underline{q}}
\newcommand{\umt}{\underline\mt}
\newcommand{\fdn}{(\mathbb{F}_+^d)^N}
\begin{document}
%%%%%%%%%%%%%%%%%%%%%%%%%%%%%%%%

%%%%%%%%%%%%%%%%%%%%%%%%%%%%%%%%
\title[Operator algebras for higher rank analysis]{Operator algebras for higher rank analysis and their application to factorial languages}

\author[A. Dor-On]{Adam Dor-On}
\address{Department of Mathematics\\ University of Illinois at Urbana-Champaign\\ Urbana\\ IL 61801\\ USA}
\email{adoron@illinois.edu}

\author[E.T.A. Kakariadis]{Evgenios T.A. Kakariadis}
\address{School of Mathematics, Statistics and Physics\\ Newcastle University\\ Newcastle upon Tyne\\ NE1 7RU\\ UK}
\email{evgenios.kakariadis@ncl.ac.uk}

\thanks{2010 {\it  Mathematics Subject Classification.} 46L08, 46L55, 46L05}

\thanks{{\it Keywords and phrases:} C*-correspondences, product systems, Nica-Pimsner algebras, higher rank graphs, C*-dynamical systems, factorial languages.}

\thanks{The first author was partially supported by an Azrieli international postdoctoral fellowship, and an Ontario trillium scholarship. 
This work was supported by EPSRC grant no EP/K032208/1.
Work on this project has been undertaken during a visit of the first author at Newcastle University, funded by the School of Mathematics and Statistics.
}

\maketitle

%%%%%%%%%%%%%%%%%%%%%%%%%%%%%%%%
\begin{abstract}
We study strong compactly aligned product systems of $\mathbb{Z}_+^N$ over a C*-algebra $A$.
We provide a description of their Cuntz-Nica-Pimsner algebra in terms of tractable relations coming from ideals of $A$.
This approach encompasses product systems where the left action is given by compacts, as well as a wide class of higher rank graphs (beyond row-finite).

Moreover we analyze higher rank factorial languages and their C*-algebras.
Many of the rank one results in the literature find here their higher rank analogues.
In particular, we show that the Cuntz-Nica-Pimsner algebra of a higher rank sofic language coincides with the Cuntz-Krieger algebra of its unlabeled follower set higher rank graph.
However there are also differences.
For example, the Cuntz-Nica-Pimsner can lie in-between the first quantization and its quotient by the compactly supported operators.
\end{abstract}

%\tableofcontents

%\vspace{-.5cm}

%%%%%%%%%%%%%%%%%%%%%%%%%%%%%%%%
\section{Introduction}
%%%%%%%%%%%%%%%%%%%%%%%%%%%%%%%%

%%%%%%%%%%%%%%%%%%%%%%%%%%%%%%%%
\subsection{Rank one initiative}
%%%%%%%%%%%%%%%%%%%%%%%%%%%%%%%%

A major trend in Operator Theory is the examination of geometric structures through Hilbertian operators.
The aim is to achieve a rigid representation so that properties of the underlying structure have equivalent C*-algebraic formulations.
This idea goes back to the work of Murray and von Neumann. Since then, a rich theory has been developed for group actions on C*-algebras.
Furthermore, in the past 20 years there has been a growing interest in understanding semigroup actions, taking motivation from the well known Cuntz algebras.

It is instructive to highlight the case of directed graphs in this endeavor. Every vertex in a directed graph has a set of edges that it receives. The Toeplitz-Cuntz-Krieger algebra generated by the Fock operators provides an effective model to encode this behaviour.
However the set of received edges should also relate to some vertex and the Fock operators do not reflect this additional information.
One can always quotient out the corresponding relations.
But the aim is to find an appropriate quotient that still preserves a faithful copy of the vertices, and thus the relationships between vertices and edges are not lost.
Cuntz and Krieger \cite{CK80} achieved this initially for $0$-$1$ matrices with no non-zero rows or columns.
Their seminal work was pushed forward in a series of papers by the Australian School and graph C*-algebras reached their final form in \cite{BHRS02, RS04}.
This representation is quite rigid.
For example Bates-Hong-Raeburn-Szymanski \cite{BHRS02} established a correspondence between the gauge-invariant ideals of the Cuntz-Krieger algebra and the hereditary and saturated vertex sets.

Graph C*-algebras fall in the general class of Pimsner algebras coming from a C*-corres\-pon\-dence, introduced by Pimsner \cite{Pim97}.
By now there is a well developed theory that unifies effectively Cuntz-type C*-algebras arising from many examples of transformations in discrete time.
One of the main tools for verifying faithfulness of their representations is the \emph{Gauge Invariant Uniqueness Theorems} (GIUT).
This type of result is motivated by the GIUT for C*-crossed products of abelian groups and was pioneered by an Huef and Raeburn \cite{HR97} for $\bZ_+$.
Essentially, it shows faithfulness of representations of the Cuntz-type C*-algebra from faithfulness of the coefficients, as long as they admit a gauge action.
Pimsner's \cite{Pim97} assumptions were subsequently removed by Fowler, Muhly and Raeburn \cite{FR99, FMR03}, and finally Katsura \cite{Kat04} proved a GIUT in full generality.
The key idea in \cite{Kat04} is to realize the ideal of solutions for certain polynomial equations, as exhibited in \cite{Kak14}.

%%%%%%%%%%%%%%%%%%%%%%%%%%%%%%%%%
\subsection{Higher rank impetus}
%%%%%%%%%%%%%%%%%%%%%%%%%%%%%%%%%

In the meantime, Fowler \cite{Fow02} pushed further the theory of Pimsner to encounter semigroups of C*-correspondences.
In a loose sense a product system $X$ over a semigroup $P$ is a family of C*-correspondences $\{X_p \mid  p \in P\}$ over the same C*-algebra $A$ such that an associative multiplication identifies $X_{p q}$ with $X_{p} \otimes_A X_{q}$ (and hence takes into consideration the semigroup structure).
C*-correspondences and product systems over $\bZ_+$ are essentially the same; but product systems are not just an artifact for generalizations.
Earlier versions had been examined by Robertson-Steger \cite{RS96, RS99} for operator algebras related to buildings.
They are the higher rank analogues of Cuntz-Krieger algebras and form the cornerstone of the higher rank analysis.
Two necessary restrictions are identified by Fowler \cite{Fow02} to get a tractable model.
First there must be an alignment of compacts that re-captures the semigroup quasi-lattice order structure.
Secondly this has to be reflected through Nica's representations \cite{Nic92} so as to induce universality of the Fock representation.
For example, Nica-covariant representations of $\bZ_+^N$ are by doubly commuting isometries instead of just commuting.

Kumjian-Pask \cite{KP00, KP03} extended the theory of Cuntz-Krieger algebras to row-finite sourceless higher rank graphs.
In their breakthrough work, Raeburn, Sims and Yeend \cite{RS03, RSY03, RSY04} established the theory for the entire class of finitely aligned higher rank graphs.
Higher rank operator algebras have been henceforth studied in depth by various authors, e.g., Davidson-Yang \cite{DY08}, Deaconu-Kumjian-Pask-Sims \cite{DKPS10}, Evans \cite{Eva08}, Farthing-Muhly-Yeend, \cite{FMY05}, Hopenwasser \cite{Hop05}, Kribs-Power \cite{KP06} and Popescu-Zacharias \cite{PZ05}.
In particular, Yeend \cite{Yee07} established the GIUT for higher rank topological graphs, a class that includes both higher rank graphs and $\bZ_+^N$-actions on commutative C*-algebras, and Davidson-Fuller-Kakariadis \cite[Section 4]{DFK14} proved the GIUT for any C*-dynamical system using a tail-adding technique.

Sims-Yeend \cite{SY11} initiated the study of Cuntz-Nica-Pimsner relations with the aim of producing a higher rank analogue of Pimsner algebras and covers very general semigroups (and all $\bZ_+^N$-product systems).
Later Carlsen-Larsen-Sims-Vittadello \cite{CLSV11} established an effective theory that produces co-universal Cuntz-Nica-Pimsner algebras.
In contrast to the $\bZ_+$-case, the description of the covariant representations is tied to the specific representation on the \emph{augmented Fock space}.
These covariant representations are characterized by equalities on finite subsets of the semigroup rather than just on $A$.
This is not a surprise since in general a semigroup can be too pathological compared to $\bZ_+$.
The problem is that, not just $A$, but also the compacts $\K X_p$ act on each $X_q$.
One then has to deal with the combinatorics involved with their injective part, and the augmented Fock representation accounts for all possible mixings.

A second point of comparison is that Katsura's approach \cite{Kat04} allowed for the further exploitation of C*-correspondences.
This includes the C*-envelope, adding tails, minimal extensions and stability of shift relations studied respectively by Kat\-soulis-Kribs \cite{KK06}, Muhly-Tomforde \cite{MT04}, Muhly-Solel \cite{MS98, MS00} and Muhly-Pask-Tomforde \cite{MPT08}, and by the second author with Katsoulis \cite{KK11, KK12}.
Katsura's ideal and the simple algebraic formulation of covariant representations in terms of $A$ has been proven essential to get these results.
Compared to the $\bZ_+$-case, there is a lack of similar higher rank developments.
The necessary, yet complex, augmented Fock space formulation seems to be an obstruction to this end.

At this point the theory seems to be amenable when focusing on specific sub-classes.
Sims-Yeend \cite{SY11} have provided a characterization of the Cuntz-Nica-Pims\-ner algebra in terms of $A$ when $X$ is a \emph{regular product system}.
The covariant representations just need to be Katsura-covariant fiber-wise in this case.
Likewise a description of the covariant representations is achieved for Cuntz-Nica-Pimsner algebras of C*-dynamical systems in \cite{DFK14} without a reference to the augmented Fock space.
In both \cite{SY11, DFK14} the action of $A$ is by compacts and the question is whether a handy description can be achieved at least at that level. In the current paper we do more than that.

%%%%%%%%%%%%%%%%%%%%%%%%%%%%%%%%
\subsection{Main goal}
%%%%%%%%%%%%%%%%%%%%%%%%%%%%%%%%

We will be denoting the generators of $\bZ_+^N$ by bold face $\bo{1}, \dots, \Bi, \dots, \bo{N}$.
We consider the class of \emph{strong compactly aligned product systems over $\bZ_+^N$}, i.e., $X = \{X_{\un{n}}\}_{\un{n} \in \bZ_+^N}$ is a compactly aligned product system over $A$ such that
\begin{equation}\label{eq:con 1}
\K X_{\un{n}} \otimes \id_{X_\bo{i}} \subseteq \K(X_{\un{n}} \otimes_A X_{\bo{i}}) 
\quad 
\textup{ whenever }
\quad
\un{n} \perp \bo{i},
\end{equation}
(see Definition \ref{D:sca}).
No other conditions are imposed on the left actions of $A$.
This class includes examples where all actions of $A$ are by compacts, e.g. regular product systems, C*-dynamical systems and row-finite graphs, but goes well beyond that point.
For example, the product system of a higher rank graph is strongly compactly aligned if and only if for every edge there are finitely many edges of perpendicular colour with non-void minimal common extension set (Section \ref{S:hrg}).
Mainly we undertake the following task:

\begin{quoting}
\noindent
{\bf Task.} \emph{Describe the Cuntz-Nica-Pimsner relations in terms of ideals in $A$ when condition (\ref{eq:con 1}) is satisfied.}
\end{quoting}

\noindent
Our motivation is to unlock the algebraic aspects of the GIUT for this class and pave the way for subsequent developments, in parallel to the rank one achievements.
We outline a series of questions in Section \ref{S:que} to be pursued in the future.

%%%%%%%%%%%%%%%%%%%%%%%%%%%%%%%%
\subsection{Structure of the paper}
%%%%%%%%%%%%%%%%%%%%%%%%%%%%%%%%

Apart from Section \ref{S:pre} on preliminaries and Section \ref{S:que} on future directions, the paper is split in three parts.
In Sections \ref{S:pol eq}--\ref{S:CNP-rep} we give the description of the CNP-relations.
Sections \ref{S:regular}--\ref{S:hrg} concern connections with the literature.
In Section \ref{S:mfl} we apply our results to a new class of C*-algebras arising from factorial languages.
We proceed to the discussion of our results.
The reader is addressed to Section \ref{Ss:not} for notation on $\bZ_+^N$ that we will be using.

%%%%%%%%%%%%%%%%%%%%%%%%%%%%%%%%
\subsection{Cuntz-Nica-Pimsner-relations}
%%%%%%%%%%%%%%%%%%%%%%%%%%%%%%%%

Henceforth let $X = \{X_\un{n} \mid \un{n} \in \bZ_+^N\}$ be a strong compactly aligned product system over $A$.
The ideals we use are parametrized over subsets $\{1, \dots, N\}$.
For every such $F \subseteq \{1, \dots, N\}$ first set
\begin{equation}
\J_F:= (\bigcap_{i \in F} \ker\phi_{\Bi})^\perp \cap (\bigcap_{\un{n} \leq (1, \dots, 1)} \phi_{\un{n}}(\K X_{\un{n}})^{-1} ),
\end{equation}
and then isolate the maximal $F^\perp$-invariant sub-ideals therein, i.e.,
\begin{equation}
\I_F:= \{a \in \J_F \mid \sca{X_{\un{m}}, a X_{\un{m}}} \in \J_F \foral \un{m} \text{ with } \un{m} \perp F\}.
\end{equation}
To fix notation, if $(\pi,t)$ is a Nica-covariant representation of $X$ then $(\pi, t_{\un{n}})$ is the representation for each component $X_{\un{n}}$ where $t_{\un{0}} := \pi$, and we write $\psi_{\un{n}}$ for the induced representation on $\K X_{\un{n}}$.
We define the \emph{Cuntz-Nica-Pimsner representations} (CNP) through the alternating sums condition
\begin{equation}
\sum \{ (-1)^{|\un{n}|} \psi_{\un{n}}(\phi_{\un{n}}(a)) \mid \un{n} \leq \un{1}_F\} = 0 \foral a \in \I_F,
\end{equation}
where $|\un{n}| = \sum_i n_i$ and $\un{1}_F = \sum_{i \in F} \Bi$.
The Cuntz-Nica-Pimsner algebra $\N\O(X)$ is the universal C*-algebra with respect to the CNP-representations.
It is instructive to compare this with the augmented Fock space description of Sims-Yeend \cite{SY11}, and we do this in Section \ref{Ss:SY}.
%For $\un{z} = (z_1,...,z_N) \in \bT^N$ and $\un{n} = (n_1,...,n_N) \in \bZ_+^N$ we denote $\un{z}^{\un{n}} := z_1^{n_1} \cdots z_N^{n_N}$.
Recall that a representation $(\pi,t)$ of $X$ \emph{admits a gauge action} if there is a strongly continuous group action $\beta \colon \bT^N \rightarrow \Aut(\ca(\pi,t))$ such that $\beta_{\un{z}}(t_{\un{n}}(\xi)) = \un{z}^{\un{n}}t_{\un{n}}(\xi)$ for all $\xi \in X_{\un{n}}$ and $\un{n}\in \bZ_+^N$. 
The GIUT then reads as follows.

\vspace{8pt}

%%%%%%%%%%%%%%%%%%%%%%%%%%%%%%%%
\noindent {\bf Theorem \ref{T:GIUT}.} ($\bZ_+^N$-GIUT)
\emph{Let $(\pi,t)$ be a CNP-representation of a strong compactly aligned $\bZ_+^N$-product system $X$.
Then it defines a faithful representation on $\N\O(X)$ if and only if $\pi$ is injective and $(\pi,t)$ admits a gauge action.}

\vspace{8pt}

Our approach is independent from \cite{CLSV11}; in fact this form of the GIUT is required to identify the the Cuntz-Nica-Pimsner algebra here with that of \cite{CLSV11, SY11} through co-universality.
For the proof of Theorem \ref{T:GIUT} we follow a \emph{Gauge-Invariant-Uniqueness-Principle}.
First set
% that can be formulated in more general cases.
\begin{equation}
\B_{F}^{-} := \spn \{ \psi_{\un{n}}(k_{\un{n}}) \mid k_{\un{n}} \in \K X_{\un{n}}, \un{0} \neq \supp \un{n} \subseteq F \} \qfor \ \mt \neq F\subseteq \{1, \dots, N\},
\end{equation}
and follow the steps:
\begin{enumerate}[labelindent=24pt,labelwidth=\widthof{\ref{last-item}},leftmargin=!]
\item[(i)] Solve the polynomial equations $\pi(a) \in \B_F^{-}$ for injective $\pi$;

\item[(ii)] Define $\I_F$ as the ideal of their solutions;

\item[(iii)] Define $\N\O(X)$ with respect to the $(\pi,t)$ for which $\pi(a) \in \B_{F}^{-}$ when $a \in \I_F$;

\item[(iv)] Establish such a $(\pi,t)$ that is faithful on $A$ and admits a gauge action. \label{last-item}
\end{enumerate}
Then a standard argument using the conditional expectation should yield a GIUT.
This scheme may be transferable to other settings that satisfy the higher rank axioms, without necessarily using the product systems language, e.g., as in \cite{DFK14}.

Step (i) for strong compactly aligned product systems is tackled in Section \ref{S:pol eq}.
The GIUT follows in Section \ref{S:CNP-rep} by a technique introduced in \cite{Kak14}, modulo the faithful embedding $A \hookrightarrow \N\O(X)$. 
For the latter we verify that the CNP-representations of Sims-Yeend \cite{SY11} initially form a subclass of representations of $\N\O(X)$.
Then the GIUT for $\N\O(X)$ gives that the Sims--Yeend CNP-representations of \cite{SY11} actually coincide with the ones herein (see Corollary \ref{C:CNP is CNP}).
We note that we use the augmented Fock representation only for step (iv), and that all of our arguments are independent from \cite{CLSV11}.

Our approach gives the ideal of CNP-relations as an algebraic sum of ideals indexed by the subsets $F$ (see Proposition \ref{P:K ideal}).
As $\N\O(X)$ is the quotient of the Fock space representation by this ideal, we henceforth free the CNP-representations from the augmented Fock space description.
This description triggers a second point of interaction with \cite{CLSV11}.
In this impressive work the authors first explore a C*-algebra $\N\O_X^r$ that is co-universal with respect to the GIUT.
To do so the authors use the theory of Fell bundles of Exel \cite{Exe97} and co-actions, and identify $\N\O_X^r$ with $\N\O(X)$ in many cases, including $\bZ_+^N$.
Here we can avoid the machinery of \cite{CLSV11, Exe97} and instead rely on the algebraic formulation of the CNP-relations to obtain co-universality of $\N\O(X)$ for strong compactly aligned product systems (see Corollary \ref{C:co-un}).

%%%%%%%%%%%%%%%%%%%%%%%%%%%%%%%%
\subsection{Connections with literature}
%%%%%%%%%%%%%%%%%%%%%%%%%%%%%%%%

We already commented on that $\N\O(X)$ coincides with $\N\O_X$ of \cite{CLSV11, SY11}.
As further applications, we use three motivating examples to showcase our approach: regular product systems, higher rank graphs and C*-dynamical systems.
In Section \ref{S:regular} we show that $\N\O(X)$ coincides with $\O_X$ of \cite{Fow02} when $X$ is regular, and that it suffices to check the covariance relations just on the generating fibers.
In Section \ref{S:ds} we present the connection with $\N\O(A,\al)$ of a C*-dynamical system $\al \colon \bZ_+^N \to \End(A)$ studied in \cite{DFK14, Kak15}.
In this case all left actions are by compacts and fall automatically in our setting so that $\N\O(X)$ fits in $\N\O(A,\al)$.
As shown in \cite{DFK14}, if $\al$ is by injective endomorphisms then $\N\O(A,\al)$ coincides with the C*-crossed product of the minimal automorphic extension of $\al$.
In particular, in this case $\N\O(A,\al)$ is $\N\O_X$, and recaptures the surjective $*$-commuting maps considered by Afsar-an Huef-Raeburn \cite{AHR17}.
In Section \ref{S:hrg} we characterize strong compact alignment for product systems arising from higher rank graphs.
To allow comparisons, in Example \ref{E:non-sca} we give a finitely aligned graph that does not fit in this class.
The $F^\perp$-invariance of $\I_F$ is translated to $F$-tracing vertices and we verify that the CNP-representations coincide with the Cuntz-Krieger $\La$-families of \cite{RSY04}.

%%%%%%%%%%%%%%%%%%%%%%%%%%%%%%%%
\subsection{Applications to Factorial Languages}
%%%%%%%%%%%%%%%%%%%%%%%%%%%%%%%%

In Section \ref{S:mfl} we introduce product systems of higher rank \emph{factorial languages} (FL) to which we apply our results.
An FL may be seen as the language of parallel performing automata; so apart from forbidden words within each automaton we may have forbidden entangled operations.
C*-algebras of rank one factorial languages had been introduced by Matsumoto \cite{Mat97}.
Recently the second author with Shalit \cite{KS15} and with Barrett \cite{BK17} studied their Pimsner algebras.
Let us review some of the related results, to allow comparisons.
A rank one factorial language $\La^*$ is a subset of the free semigroup $\bF_+^d$ on $d$ generators such that if a word $\mu \in \La^*$ then every subword of $\mu$ is also in $\La^*$.
The basic example is the language of a subshift.
In \cite{KS15, BK17} it has been shown that there is a dynamical system, namely the \emph{quantized dynamics}, which coincides with the follower set graph when $\La^*$ is sofic.
In addition, the Cuntz-Pimsner algebra $\O(\La^*)$ is either Matsumoto's first quantization $\ca(T)$ or the quotient by the compacts.
This dichotomy depends on whether for every generator $k \in [d]$ there is a word $\mu_k$ such that $\mu_k k \notin \La^*$.

Most of the structural data pass to the higher rank context naturally, along the same lines of \cite{LM95, KS15, BK17}.
For example there is an analogue of a subshift construction and follower set graphs, and soficity amounts to the follower set graph having finitely many vertices. 
As in \cite{KS15}, one can naturally associate a product system and similar algebras to a higher-rank FL.
In Theorem \ref{T: fsg} we prove that $\N\O(\La^*)$ coincides with the Cuntz-Krieger algebra of the ambient unlabeled follower set (higher rank) graph. 
This result is in analogy with that of Carlsen \cite{Car03} and \cite{KS15} in the single variable case.
However, there are several important differences:
\begin{enumerate}[labelindent=10pt,labelwidth=\widthof{\ref{last-item}},leftmargin=!]
\item[(a)] With respect to \cite{Car03} we do not use the Krieger cover on truncated points of a subshift or impose any condition on the subshifts.
Our approach follows \cite{KS15, SS09} where we work directly with the factorial language.

\item[(b)] Instead of cutting with compacts as in \cite{KS15}, here we have to consider the quotient by operators that are ``compactly supported'' on one or more directions.

\item[(c)] The ideals are no longer characterized in terms of forbidden words, so that a dichotomy as in \cite{KS15} fails in general for $\I_F$, unless $F = [N]$ (see Example \ref{E:dich fail} and Proposition \ref{P:IF forb}).
 \label{last-item}
\end{enumerate}
In Corollaries \ref{C:ext 1} and \ref{C:ext 2} we show exactly when $\N\O(\La^*)$ can be one of the extreme cases, i.e., when it coincides with the first quantization or with the quotient by the compactly supported operators.
But it can also be many quotients ``in-between".
In Corollary \ref{C:inbetween} we show that a factorial language is a product of rank one languages if and only if there is a canonical $*$-isomorphism identifying $\N\O(\La^*)$ with the tensor product of Cuntz-Pimsner algebras of its rank one languages.
This scheme can provide examples that are ``in-between'' quotients.

%%%%%%%%%%%%%%%%%%%%%%%%%%%%%%%%
\section{Preliminaries}\label{S:pre}
%%%%%%%%%%%%%%%%%%%%%%%%%%%%%%%%

%%%%%%%%%%%%%%%%%%%%%%%%%%%%%%%%
\subsection{Notation}\label{Ss:not}
%%%%%%%%%%%%%%%%%%%%%%%%%%%%%%%%

We use the notation $[N] := \{1, 2, \dots, N\}$ and $\bZ_+ = \{0, 1, \dots \}$.
The free generators of $\bZ_+^N$ for $N < \infty$ will be denoted by $\bo{1}, \dots, \bo{N}$.
We write
\[
|\un{n}| \equiv |\sum \{ n_i \Bi \mid i \in [N] \}| := \sum_{i \in [N]} n_i
\]
for the \emph{length} of $\un{n}$.
For $\mt \neq F \subseteq [N]:=\{1, \dots, N\}$ we write
\[
\un{1}_F = \sum \{ \Bi \mid i \in F \} \qand \un{1} \equiv \un{1}_{[N]} = (1, \dots, 1).
\]
We consider the lattice structure in $\bZ_+^N$ given by
\[
\un{n} \vee \un{m} := (\max\{n_i, m_i\})_{i=1}^N \qand \un{n} \wedge \un{m} := (\min\{n_i, m_i\})_{i=1}^N.
\]
We denote \emph{the support of $\un{n}$} by $\supp \un{n} := \{i \in [N] \mid n_i \neq 0\}$ and we write
\[
\un{n} \perp \un{m} \qiff \supp \un{n} \bigcap \supp \un{m} = \mt.
\]
Thus $\un{n} \perp F$ means that $\supp \un{n} \bigcap F = \mt$.
We will be making use of the alternating sums, i.e.,
\[
\sum \{(-1)^{|\un{n}|} \mid \un{n} \leq \un{1}_F \} = 0 \foral \mt \neq F \subseteq [N].
\]

%%%%%%%%%%%%%%%%%%%%%%%%%%%%%%%%
\subsection{C*-correspondences}
%%%%%%%%%%%%%%%%%%%%%%%%%%%%%%%%

The reader should be well acquainted with the general theory of Hilbert modules and C*-correspondences.
For example, one may consult \cite{Kat04} and \cite{Lan95} which we follow for terminology.
Here we just wish to fix notation.

A \emph{C*-correspondence} $X$ over $A$ is a right Hilbert module over $A$ with a left action given by a $*$-homomorphism $\phi_X \colon A \to \L X$.
We write $\L X$ and $\K X$ for the adjointable operators and the compact operators of $X$, respectively.
It is accustomed to denote the ``rank one compact operators'' $\zeta \mapsto \xi \sca{\eta, \zeta}$ by $\theta^X_{\xi, \eta}$.
We will write $a\xi$ for $\phi_X(a)\xi$ when it is clear from the context which left action we use.

The C*-correspondence $X$ is called \emph{non-degenerate/injective} if $\phi_X$ is non-degenera\-te/in\-jective.
If $\phi_X$ is injective and $\phi_X(A) \subseteq \K X$ then we say that $X$ is \emph{regular}.
For two C*-corresponden\-ces $X, Y$ over the same $A$ we write $X \otimes_A Y$ for the stabilized tensor product over $A$.
Moreover we say that $X$ is unitarily equivalent to $Y$ if there is a surjective adjointable $U \in \L(X,Y)$ such that $\sca{U \xi, U \eta} = \sca{\xi, \eta}$ and $U (a \xi b) = a U(\xi) b$ for all $\xi, \eta \in X$ and $a,b \in A$; in this case we write $X \simeq Y$.

There are two key results for C*-correspondences that we will be using throughout the paper.
First for every $\xi \in X$ there exists an $\eta \in X$ such that
\begin{equation}\label{eq:factorization}
\xi = \eta a \qfor a = \sca{\xi, \xi}^{1/2}.
\end{equation}
For a reference see \cite[Lemma 4.4]{Lan95}.
Secondly for an ideal $I \subseteq A$ and a $k \in \K X$ we have the equivalence:
\begin{equation}\label{eq:compacts ideal}
\sca{\xi, k \eta} \in I \foral \xi, \eta \in X \; \text{ if and only if } \; k \in \K(X I).
\end{equation}
For a reference see \cite[Lemma 2.6]{FMR03} or \cite[Lemma 1.6]{Kat04}.

A \emph{representation} $(\pi,t)$ of a C*-correspondence is a left module map that preserves the inner product.
Then $(\pi,t)$ is automatically a bimodule map.
Moreover there exists a $*$-homomorphism $\psi$ on $\K X$ such that
\[
\psi(\theta^X_{\xi, \eta}) = t(\xi) t(\eta)^* \foral \theta^X_{\xi, \eta} \in \K X.
\]
If $\pi$ is injective then so is $\psi$. % \cite{KPW98}.
A representation $(\pi,t)$ is said to admit a gauge action $\be \colon \bT \to \Aut(\ca(\pi,t))$ if $\{\be_z\}_{z \in \bT}$ is pointwise continuous and
\[
\be_z(\pi(a)) = \pi(a) \foral a \in A
\qand
\be_z(t(\xi)) = z t(\xi) \foral \xi \in X.
\]

The \emph{Toeplitz-Pimsner algebra $\T_X$} is the universal C*-algebra with respect to the representations of $X$.
The \emph{Cuntz-Pimsner algebra $\O_X$} is the universal C*-algebra with respect to the \emph{covariant} representations of $X$. More precisely, we say that $(\pi,t)$ is covariant if
\[
\pi(a) = \psi (\phi_X(a)) \foral a \in J_X,
\]
where the ideal $J_X:=\ker \phi_X \cap \phi^{-1}(\K X)$ is the largest ideal on which the restriction of $\phi_X$ is injective with image into the compacts.

The Gauge Invariant Uniqueness Theorem (GIUT) was initiated by an Huef and Raeburn for Cuntz-Krieger algebras \cite{HR97}.
Various generalizations were given by Doplicher, Pinzari and Zuccante \cite{DPZ98}, Fowler, Muhly and Raeburn \cite{FMR03}, and Fowler and Raeburn \cite{FR99}. 
A fully general version for the GIUT of $\O_X$ was eventually proven by Katsura \cite{Kat04}.
In fact, in \cite{Kak14} it is shown that $\O_X$ is co-universal in the sense that any gauge invariant quotient of $\T_X$ surjects to $\O_X$ as long as it is injective on the coefficients.

%%%%%%%%%%%%%%%%%%%%%%%%%%%%%%%%%%%%%
\begin{theorem}[$\bZ_+$-GIUT]
Let $X$ be a C*-correspon\-dence over $A$. Then a pair $(\pi,t)$ defines a faithful representation of the Cuntz-Pimsner algebra $\O_X$ if and only if $(\pi,t)$ admits a gauge action and $\pi$ is injective.
\end{theorem}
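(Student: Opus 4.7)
The plan is to prove the two directions separately. The forward implication is routine bookkeeping: if $\pi \times t \colon \O_X \to \ca(\pi, t)$ is a $*$-isomorphism, then $\pi$ is injective because it factors through the canonical embedding $A \hookrightarrow \O_X$ (which is visibly injective from the Fock representation), and the universal gauge action on $\O_X$ pulls back through $(\pi \times t)^{-1}$ to a gauge action on $\ca(\pi, t)$. All the content lies in the converse.

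For the backward direction, suppose $\pi$ is injective and $(\pi, t)$ admits a gauge action $\beta \colon \bT \to \Aut(\ca(\pi, t))$. Averaging $\beta$ against the normalized Haar measure on $\bT$ produces a faithful conditional expectation $E^\beta \colon \ca(\pi, t) \to \ca(\pi, t)^\beta$ onto the fixed-point subalgebra, and the analogous expectation $E \colon \O_X \to \O_X^\gamma$ satisfies $E^\beta \circ (\pi \times t) = (\pi \times t) \circ E$. A standard argument (using that $E$ is faithful on positive elements) then reduces the injectivity of $\pi \times t$ on $\O_X$ to its injectivity on the \emph{core} $\O_X^\gamma$. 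I would identify the core as the norm closure of $\bigcup_{n \geq 0} \B_n$, where
\[
\B_n := \spn \bigl\{ \psi_k(T) \bigm| T \in \K(X^{\otimes k}),\ 0 \leq k \leq n \bigr\},
\]
with the convention $X^{\otimes 0} := A$ and $\psi_0 := \pi$; each $\B_n$ is a C*-subalgebra because products of generators collapse via $t(\xi)^* t(\eta) = \pi(\sca{\xi, \eta})$.

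It then suffices to prove by induction on $n$ that $\pi \times t$ is isometric on each $\B_n$. The base case $n = 0$ is the injectivity of $\pi$. For the inductive step I would follow the polynomial-equation technique of \cite{Kak14}: given $b \in \B_{n+1}$ with $(\pi \times t)(b) = 0$, I first normalize its degree-zero component $T_0$ to lie in $(J_X)^\perp$ by using the covariance relation $\pi(a) = \psi(\phi_X(a))$ on $J_X$, and then compress by $t(\xi)^*(\,\cdot\,) t(\eta)$ for $\xi, \eta \in X$ to produce an element of $\B_n$ whose image vanishes. The induction hypothesis forces the compressed element to be zero, and peeling off degrees one by one ultimately isolates $T_0 \in \ker \phi_X \cap \phi_X^{-1}(\K X) \cap (J_X)^\perp = 0$, after which the remaining lower-degree part lives in $\B_n$ and the induction closes.

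The main obstacle is precisely this degree-zero reduction: one must verify that each compression $t(\xi)^* \psi_k(T) t(\eta)$ lands in $\K(X^{\otimes (k-1)})$ rather than merely in $\L(X^{\otimes (k-1)})$, which requires the factorization identity \eqref{eq:factorization} together with the characterization \eqref{eq:compacts ideal} of compacts controlled by an ideal, and careful bookkeeping of the amplification maps $\K(X^{\otimes k}) \hookrightarrow \L(X^{\otimes (k+\ell)})$. Once this step is in place, isometry on each $\B_n$ lifts to isometry on the core by a density argument, and then to isometry on all of $\O_X$ via the intertwined conditional expectations.
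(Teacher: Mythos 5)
Your overall strategy is the right one, and it is in fact the blueprint this paper follows for its higher-rank Theorem \ref{T:GIUT}: the paper does not prove the $\bZ_+$ statement at all (it is quoted as Katsura's theorem \cite{Kat04}, with the polynomial-equation formulation of the key step taken from \cite{Kak14}), but the reduction to the core via the faithful conditional expectation, the filtration by the subalgebras $\B_n$, and the compress-and-induct-on-degree scheme are exactly the ingredients deployed in Sections \ref{S:pol eq}--\ref{S:GIUT} for $\bZ_+^N$. Two steps of your sketch are, however, not correct as written.

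First, the forward direction is not routine bookkeeping ``from the Fock representation'': the Fock representation is injective on $A$ but is \emph{not} covariant, so it only witnesses $A \hookrightarrow \T_X$ and says nothing about injectivity of $A \to \O_X$. You must exhibit a covariant representation that is injective on $A$ (e.g.\ the induced representation on $\F(X)/\ol{\F(X)J_X}$, or Katsura's construction); this is precisely the point the present paper has to labour over in Section \ref{S:CNP-rep} for the higher-rank case, via Sims--Yeend \cite{SY11}. Second, the degree-zero reduction is garbled. One cannot ``normalize $T_0$ to lie in $(J_X)^\perp$'' --- the ideal $J_X$ is not complemented in $A$ --- and the displayed identity $\ker\phi_X \cap \phi_X^{-1}(\K X)\cap (J_X)^\perp = 0$ is false in general: since $J_X \subseteq (\ker\phi_X)^\perp$ forces $\ker\phi_X \subseteq (J_X)^\perp$, that intersection equals $\ker\phi_X \cap \phi_X^{-1}(\K X)$, which is nonzero whenever $\phi_X$ has kernel mapped into the compacts. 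The correct step (the content of \cite{Kak14}, mirrored here by Proposition \ref{P:in IF}) runs in the other direction: if $\pi$ is injective and $\pi(a) = -\sum_{k\geq 1}\psi_k(T_k)$, then $a$ lies \emph{in} $J_X = (\ker\phi_X)^\perp \cap \phi_X^{-1}(\K X)$, whence covariance lets you replace $\pi_u(a)$ by $\psi_{u,1}(\phi_X(a))$ and eliminate the degree-zero summand entirely before compressing. With these two repairs the argument closes.
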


%%%%%%%%%%%%%%%%%%%%%%%%%%%%%%%%
\subsection{Product systems}
%%%%%%%%%%%%%%%%%%%%%%%%%%%%%%%%

Fix a set $\{X_{\Bi} \mid i \in [N]\}$ of C*-correspondences over $A$, one for each generator of $\bZ_+^N$.
A \emph{product system $X$} is a family $\{X_{\un{n}} : \un{n} \in \bZ_+^N\}$ of C*-correspondences over $A$ such that 
\[
X_{\un{0}} = A \qand 
X_{\bo{i}_1}^{\otimes n_1} \otimes_A \cdots \otimes_A X_{\bo{i}_k}^{\otimes n_k} \simeq X_{\un{n}} \text{ whenever } \un{n} = \sum n_j \bo{i}_j \text{ and } n_1 \neq 0.
\]
We require $n_1 \neq 0$ so that these equivalences do not force non-degeneracy of the fibers.
Consequently $X$ comes with a family of associative rules in the form of unitary equivalences
\[
u_{\un{n}, \un{m}} \colon X_{\un{n}} \otimes_A X_{\un{m}} \to X_{\un{n} + \un{m}}.
\]
We will suppress the use of the $u_{\un{n}, \un{m}}$ as much as possible by writing $\xi_{\un{n}} \xi_{\un{m}} \in X_{\un{n} + \un{m}}$ for the element $u_{\un{n}, \un{m}}(\xi_{\un{n}} \otimes \xi_{\un{m}})$.
Along with the system we have some canonical operations that respect these equivalences.
To this end we define the maps
\[
i_{\un{n}}^{\un{n} + \un{m}} \colon \L X_{\un{n}} \to \L X_{\un{n} + \un{m}}
\; \textup{ such that } \;
i_{\un{n}}^{\un{n} + \un{m}}(S) = u_{\un{n}, \un{m}}(S \otimes \id_{X_{\un{m}}}) u^{*}_{\un{n}, \un{m}}.
\]
It is clear that $i^{\un{n} + \un{m} + \un{x}}_{\un{n} + \un{m}} \, i^{\un{n} + \un{m}}_{\un{n}} = i^{\un{n} + \un{m} + \un{x}}_{\un{n}}$ and thus $i^{\un{n} + \un{m}}_{\un{n}}(\phi_{\un{n}}(a)) = \phi_{\un{n} + \un{m}}(a)$.
For convenience we shall write
\[
S \vee T := i_{\un{n}}^{\un{n} \vee \un{m}}(S) i_{\un{m}}^{\un{n} \vee \un{m}}(T) \qfor S \in \L X_{\un{n}}, T \in \L X_{\un{m}}.
\]
Following Fowler's work \cite{Fow02}, a product system is called \emph{compactly aligned} if it has the property:
\[
S \vee T \in \K X_{\un{n} \vee \un{m}} \text{ whenever } S \in \K X_{\un{n}}, T \in \K X_{\un{m}}.
\]

%%%%%%%%%%%%%%%%%%%%%%%%%%%%%%%%
\subsection{Strong compactly aligned product systems} \label{Ss:sca}
%%%%%%%%%%%%%%%%%%%%%%%%%%%%%%%%

In the current paper we consider product systems that satisfy an additional property.

%%%%%%%%%%%%%%%%%%%%%%%%%%%%%%%%
\begin{definition}\label{D:sca}
A product system $X$ is called \emph{strong compactly aligned} if it is compactly aligned and $i_{\un{n}}^{\un{n} + \Bi}(\K X_{\un{n}}) \subseteq \K X_{\un{n} + \Bi}$ whenever $\un{n} \perp \Bi$.
\end{definition}

%%%%%%%%%%%%%%%%%%%%%%%%%%%%%%%%
\begin{example}
If $\phi_\Bi(A) \subseteq \K X_{\Bi}$ for all $i \in [N]$, then $k \otimes \id_{X_{\bo{j}}} \in \K X_{\Bi + \bo{j}}$ for all $k \in \K X_{\Bi}$ and $j \in [N]$ by \cite[Proposition 4.7]{Lan95}.
Inductively, all actions $\phi_{\un{n}}$ are by compacts and \cite[Proposition 4.7]{Lan95} yields that the product system is strong compactly aligned.
We will later establish through Proposition \ref{P:sca-graph} that many higher rank graphs can be constructed so that their associated product system is strong compactly aligned without satisfying $\phi_\Bi(A) \subseteq \K X_{\Bi}$ for all $i \in [N]$. 
However there are limitations and Example \ref{E:non-sca} refers to a compactly aligned product system that is \emph{not} strong compactly aligned.
\end{example}

Every $\K X_\Bi$ admits a contractive approximate identity (c.a.i.) $(k_{\Bi, \la_i})_{\la_i \in \La_\Bi}$.
We consider the directed set $\La_{\bo{1}} \times \cdots \times \La_{\bo{N}}$ with the product order and set $k_{\Bi, \un{\la}} = k_{\Bi, \la_i}$.
Then $(k_{\Bi, \un{\la}})$ is a subnet of $(k_{\Bi, \la_i})$.
Henceforth we omit the vector form for the indices and fix the systems of nets
\[
(k_{\Bi, \la})_{\la \in \La} \text{ over the directed set } \La = \La_{\bo{1}} \times \cdots \times \La_{\bo{N}}.
\]
Fix a subset $F\subseteq [N]$. 
By applying on elementary tensors we can verify that if $i \in F$ then $i^{\un{1}_F}_{\Bi}(k_{\Bi, \la})$ converges to the identity of $\L X_{\un{1}_F}$ in the strict topology.
Therefore so does the net
\begin{equation}\label{eq:efl}
e_{F, \la}: = \prod \{ i_{\Bi}^{\un{1}_F}(k_{\Bi, \la}) \mid i \in F \}.
\end{equation}
Consequently, if $\un{n} \in \bZ_+^N$ with $\supp \un{n} \supseteq F$ then we have
\begin{equation}\label{eq:com F}
\nor{\cdot}\text{-}\lim_\la i_{\un{1}_F}^{\un{n}}(e_{F, \la}) \cdot k_{\un{n}} = k_{\un{n}}  \foral k_\un{n} \in \K X_{\un{n}}.
\end{equation}
%For simplicity we will write
%\[
%e_{F,\lambda}^{\un{n}} := i_{\un{1}_F}^{\un{n}}(e_{F, \la}) \qif \supp \un{n} \supseteq F.
%\]
The product defining $e_{F,\la}$ is taken in the order inherited by $1 < 2 < \cdots < N$.
However by considering different decompositions for the elementary tensors, it follows that equation (\ref{eq:com F}) holds for any order.

%%%%%%%%%%%%%%%%%%%%%%%%%%%%%%%%
\begin{proposition}\label{P: sca ai}
Let $X$ be a strong compactly aligned product system.
Fix $F \subseteq [N]$ and $\un{0} \neq \un{n} \in \bZ_+^N$, and set $\un{m} = \un{n} \vee \un{1}_F$.
Then for the $e_{F,\la}$ of equation (\ref{eq:efl}) we have that
\[
\nor{\cdot}\text{-}\lim_\la i_{\un{1}_F}^{\un{m}}(e_{F, \la}) \cdot i_{\un{n}}^{\un{m}}(k_{\un{n}}) 
= 
i_{\un{n}}^{\un{m}}(k_{\un{n}})  \foral k_\un{n} \in \K X_{\un{n}}.
\]
In particular, it follows that $i_{\un{n}}^{\un{m}}(k_{\un{n}}) \in \K X_{\un{m}}$ for all $k_\un{n} \in \K X_{\un{n}}$.
\end{proposition}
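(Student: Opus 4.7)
The plan is to establish the ``in particular'' assertion first (that $i_\un{n}^\un{m}(k_\un{n}) \in \K X_\un{m}$) and then deduce the displayed norm convergence as a direct consequence of equation (\ref{eq:com F}). Although this reverses the order in which the two conclusions appear in the statement, it seems to be the most economical route: once the limit is known to be a compact operator, the convergence is an instance of the approximation already recorded.

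For the compactness, I would set $F_2 := F \setminus \supp \un{n}$. A coordinate-wise comparison shows that $\un{m} - \un{n} = \un{1}_{F_2}$ and $\un{n} \perp \un{1}_{F_2}$. When $F_2 = \mt$ we have $\un{m} = \un{n}$ and there is nothing to prove. Otherwise enumerate $F_2 = \{i_1, \dots, i_s\}$ in any order and set $\un{n}_0 := \un{n}$ and $\un{n}_j := \un{n}_{j-1} + \Bi_j$, so that $\un{n}_s = \un{m}$. At each stage $\supp \un{n}_{j-1} = \supp \un{n} \cup \{i_1, \dots, i_{j-1}\}$ does not contain $i_j$, hence $\un{n}_{j-1} \perp \Bi_j$. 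Definition \ref{D:sca} of strong compact alignment then gives $i^{\un{n}_j}_{\un{n}_{j-1}}(\K X_{\un{n}_{j-1}}) \subseteq \K X_{\un{n}_j}$, and composing these inclusions yields $i^\un{m}_\un{n}(\K X_\un{n}) \subseteq \K X_\un{m}$.

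For the norm convergence, since $\un{m} \geq \un{1}_F$ we have $\supp \un{m} \supseteq F$. Therefore equation (\ref{eq:com F}) applied with $\un{m}$ in place of $\un{n}$ and to the compact $i^\un{m}_\un{n}(k_\un{n}) \in \K X_\un{m}$ just established yields
\[
\lim_\la i_{\un{1}_F}^\un{m}(e_{F,\la}) \cdot i^\un{m}_\un{n}(k_\un{n}) = i^\un{m}_\un{n}(k_\un{n}) \quad \text{in norm,}
\]
which is the main assertion.

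The principal obstacle is the iterated application of strong compact alignment in the first step: one has to choose an order on $F_2$ so that at each stage the added generator is perpendicular to the current support, and then verify that perpendicularity is preserved as supports grow. Once this coordinate bookkeeping is arranged, both assertions follow, with the norm convergence reducing to the approximation (\ref{eq:com F}).
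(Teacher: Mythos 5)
Your proof is correct. The ingredients are exactly those of the paper's argument --- iterated use of Definition \ref{D:sca} over the coordinates of $F \setminus \supp \un{n}$, together with the approximation recorded in equation (\ref{eq:com F}) --- but you arrange them in the opposite order. The paper proves the norm convergence first, by an induction over the individual factors $i_{\Bi}^{\un{m}}(k_{\Bi,\la})$, $i \in F$, of $e_{F,\la}$: at each stage it either applies the single-generator instance of (\ref{eq:com F}) directly (when $i \in \supp\un{n}$) or first pushes the compact one step up via strong compact alignment (when $i \notin \supp\un{n}$); the membership $i_{\un{n}}^{\un{m}}(k_{\un{n}}) \in \K X_{\un{m}}$ is then extracted afterwards by a separate, short induction. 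You instead establish $i_{\un{n}}^{\un{m}}(k_{\un{n}}) \in \K X_{\un{m}}$ first --- essentially the same induction the paper uses for its second assertion, and your bookkeeping ($\un{m}-\un{n}=\un{1}_{F\setminus\supp\un{n}}$, perpendicularity preserved as the support grows) is sound --- and then obtain the convergence in one stroke from (\ref{eq:com F}) applied with $\un{m}$ in place of $\un{n}$, which is legitimate since $\supp\un{m} \supseteq F$. Your version is slightly more economical, replacing the factor-by-factor induction on the product defining $e_{F,\la}$ by a single application of (\ref{eq:com F}); both arguments rest equally on the validity of that equation as established in the preliminaries.
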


\begin{proof}
Without loss of generality assume that $F = \{1, \dots, q\}$ and set $\un{x} = \bo{1} + \cdots + \bo{q}$.
Then
\[
e_{F, \la} = i_{\bo{1}}^{\un{x}}(k_{\bo{1}, \la}) \cdots i_{\bo{q}}^{\un{x}}(k_{\bo{q}, \la})
\]
for the approximate identities $(k_{\bo{i}, \la})$ in  $\K X_{\bo{i}}$.
Suppose that $\supp \un{n} = \{r, \dots, s\}$ and fix $k_\un{n} \in \K X_{\un{n}}$.
If $q \in \supp \un{n}$ then the comments preceding the statement show that 
\[
\nor{\cdot}\text{-}\lim_\la i_{\bo{q}}^{\un{n}}(k_{\bo{q}, \la}) k_{\un{n}} = k_{\un{n}}
\]
and therefore applying $i_{\un{n}}^{\un{m}}$ for $\un{m} = \un{n} \vee \un{1}_F$ yields
\[
\nor{\cdot}\text{-}\lim_\la i_{\bo{q}}^{\un{m}}(k_{\bo{q}, \la}) i_{\un{n}}^{\un{m}}(k_{\un{n}}) = i_{\un{n}}^{\un{m}}(k_{\un{n}}).
\]
If $q \notin \supp \un{n}$ then $i_{\un{n}}^{\un{n} + \bo{q}}(k_{\un{n}})$ is in $\K X_{\un{n} + \bo{q}}$ as $X$ is strong compactly aligned.
Thus by the comments preceding the statement we have that
\[
\nor{\cdot}\text{-}\lim_\la i_{\bo{q}}^{\un{n} + \bo{q}}(k_{\bo{q}, \la}) i_{\un{n}}^{\un{n} + \bo{q}}(k_{\un{n}}) = i_{\un{n}}^{\un{n} + \bo{q}}(k_{\un{n}})
\]
and therefore applying $i_{\un{n} + \bo{q}}^{\un{m}}$ yields
\[
\nor{\cdot}\text{-}\lim_\la i_{\bo{q}}^{\un{m}}(k_{\bo{q}, \la}) i_{\un{n}}^{\un{m}}(k_{\un{n}}) = i_{\un{n}}^{\un{m}}(k_{\un{n}}).
\]
Proceeding inductively we conclude that
\begin{align*}
\nor{\cdot}\text{-}\lim_\la i_{\un{1}_F}^{\un{m}}(e_{F, \la}) i_{\un{n}}^{\un{m}}(k_{\un{n}})
& =
\nor{\cdot}\text{-}\lim_\la i_{\bo{1}}^{\un{m}}(k_{\bo{1}, \la}) \cdots i_{\bo{q}}^{\un{m}}(k_{\bo{q}, \la}) i_{\un{n}}^{\un{m}}(k_{\un{n}}) 
= 
i_{\un{n}}^{\un{m}}(k_{\un{n}})
\end{align*}
which completes the proof of the first part.
For the second part we have that
\[
\un{m} = \un{n} + \sum \{ \Bi \mid 1 \leq i < \min\{q, r\} \}.
\]
As $X$ is strong compactly aligned, we get
\[
i_{\un{n}}^{\un{n} + \Bi}(k_{\un{n}}) \in \K X_{\un{n} + \Bi} \foral 1 \leq i < \min\{q, r\},
\]
and inductively it follows that $i_{\un{n}}^{\un{m}}(k_{\un{n}}) \in \K X_{\un{m}}$.
\end{proof}

%%%%%%%%%%%%%%%%%%%%%%%%%%%%%%%%
\subsection{Representations} \label{Ss:CNP-relations}
%%%%%%%%%%%%%%%%%%%%%%%%%%%%%%%%

Let us set the terminology for Nica-Pimsner algebras of product systems, and define the ideals that give rise to our Cuntz-Nica-Pimsner relations.

%%%%%%%%%%%%%%%%%%%%%%%%%%%%%%%%
\begin{definition}
A \emph{Nica-covariant representation $(\pi, t)$} of a product system $X = \{X_{\un{n}} \mid \un{n} \in \bZ_+^N \}$ consists of a family of representations $(\pi, t_{\un{n}})$ of $X_{\un{n}}$ that satisfy the \emph{associative rule}:
\[
t_{\un{n} + \un{m}}(\xi_{\un{n}} \xi_{\un{m}}) = t_{\un{n}}(\xi_{\un{n}}) t_{\un{m}}(\xi_{\un{m}})
\]
and the \emph{Nica-covariance}:
\[
\psi_{\un{n}}(S) \psi_{\un{m}}(T) = \psi_{\un{n} \vee \un{m}} (S \vee T) \text{ whenever } S \in \K X_{\un{n}}, T \in \K X_{\un{m}}.
\]
The \emph{Toeplitz-Nica-Pimsner} algebra $\N\T(X)$ is the universal C*-algebra generated by $A$ and $X$ with respect to the representations of $X$.
We write $\pi \times t$ for the induced representation of a Nica-covariant pair $(\pi,t)$.
\end{definition}

%%%%%%%%%%%%%%%%%%%%%%%%%%%%%%%%
\begin{remark}
The Fock space provides an essential example of a Nica-covariant representation.
In short let $\F(X) = \sumoplus \{ X_{\un{m}} \mid \un{m} \in \bZ_+^N\}$.
For $a \in A$ and $\xi_{\un{n}} \in X_{\un{n}}$ define
\[
\si(a) \xi_{\un{m}} = \phi_{\un{m}}(a) \xi_{\un{m}}
\qand
s(\xi_{\un{n}}) \xi_{\un{m}} = \xi_{\un{n}} \xi_{\un{m}}
\]
for all $\xi_{\un{m}} \in X_{\un{m}}$.
Then $(\si, s)$ is Nica-covariant and it is called the \emph{Fock representation} of $X$ \cite{Fow02}.
By taking the compression at the $(\un{0}, \un{0})$-entry we see that $\si$, and thus each $s_{\un{n}}$, is injective.
\end{remark}

Now we want to explore a specific subclass of representations.
For a finite $\mt \neq F \subseteq [N]$ we form the ideal 
\begin{align*}
\J_F
%& := 
%\{ a \in (\bigcap_{i \in F} \ker\phi_{\Bi})^\perp \mid  \phi_{\un{n}}(a) \in \K X_{\un{n}} \foral \un{n} \leq \un{1} \} \\
& =
(\bigcap_{i \in F} \ker\phi_{\Bi})^\perp \cap (\bigcap\{ \phi_{\un{n}}^{-1}( \K X_{\un{n}}) \mid \un{n} \leq \un{1} \} )
\end{align*}
with the understanding that $\phi_{\un{0}} = \id_A$.
In particular when $X$ is strong compactly aligned, we have that
\[
\bigcap\{ \phi_{\un{n}}^{-1}( \K X_{\un{n}}) \mid \un{n} \leq \un{1} \} 
= 
\bigcap\{ \phi_{\Bi}^{-1}( \K X_{\Bi}) \mid i \in [N] \}. 
\]
Furthermore we define the ideal
\[
\I_F:= \{a \in \J_F \mid \sca{X_{\un{m}}, a X_{\un{m}}} \subseteq \J_F \foral \un{m} \perp F\}.
\]
The next proposition shows that $\I_F$ is the biggest ideal in $\J_F$ that remains invariant under the ``action'' of $F^\perp$.

%%%%%%%%%%%%%%%%%%%%%%%%%%%%%%%%
\begin{proposition}\label{P:rem inv IF}
Let $X$ be a product system.
If $a \in \I_F$ then $\sca{X_{\un{m}}, a X_{\un{m}}} \subseteq \I_F$ for all $\un{m} \perp F$.
\end{proposition}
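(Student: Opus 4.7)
The plan is to use the tensor-product structure of the product system to reduce the verification of $F^\perp$-invariance for $b := \langle \xi, a \eta\rangle$ back to the original assumption on $a$. The key observation is the arithmetic fact that if $\un{m} \perp F$ and $\un{m}' \perp F$, then $\un{m} + \un{m}' \perp F$ as well, because the support of a sum is the union of the supports.

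Fix $a \in \I_F$ and $\un{m} \perp F$, and pick arbitrary $\xi, \eta \in X_{\un{m}}$. Set $b := \langle \xi, a \eta\rangle \in A$. The goal is to show $b \in \I_F$, i.e., that $b \in \J_F$ and that $\langle X_{\un{m}'}, b X_{\un{m}'}\rangle \subseteq \J_F$ for every $\un{m}' \perp F$. The first condition is immediate from $a \in \I_F$ and the definition of $\I_F$ applied at $\un{m}$.

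For the second condition, fix $\un{m}' \perp F$ and $\xi', \eta' \in X_{\un{m}'}$. The idea is to lift the inner product $\langle \xi', b \eta'\rangle$ to the fiber $X_{\un{m} + \un{m}'}$ via the associativity isomorphism $X_{\un{m}} \otimes_A X_{\un{m}'} \simeq X_{\un{m} + \un{m}'}$. Concretely, because the left action on $X_{\un{m} + \un{m}'}$ is $i_{\un{m}}^{\un{m} + \un{m}'}(\phi_{\un{m}}(\cdot))$, i.e., acts on the first tensor leg, one obtains $a(\eta \eta') = (a \eta) \eta'$, and unwinding the inner product on a tensor product gives
\[
\langle \xi', b \eta'\rangle = \langle \xi', \langle \xi, a \eta\rangle \eta'\rangle = \langle \xi \xi',\, a(\eta \eta')\rangle.
\]
Since $\un{m} + \un{m}' \perp F$ and $a \in \I_F$, the right-hand side lies in $\J_F$. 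Hence $\langle X_{\un{m}'}, b X_{\un{m}'}\rangle \subseteq \J_F$, which is what we needed.

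There is no real obstacle here; the one place requiring minor care is confirming that $a$ acts on a simple tensor by acting on the first leg, so that the computation $\langle \xi \xi', a(\eta \eta')\rangle = \langle \xi', \langle \xi, a\eta\rangle \eta'\rangle$ is valid. This is exactly the compatibility condition packaged by the unitaries $u_{\un{m},\un{m}'}$ in the definition of a product system and the identity $\phi_{\un{m}+\un{m}'}(a) = i_{\un{m}}^{\un{m}+\un{m}'}(\phi_{\un{m}}(a))$. Finally, since $\I_F$ is closed under sums and ideal operations in $A$ (and all of this passes to arbitrary elements of the set $\langle X_{\un{m}}, a X_{\un{m}}\rangle$ by linearity and continuity), the conclusion holds for every element of $\langle X_{\un{m}}, a X_{\un{m}}\rangle$, not only the generating ones.
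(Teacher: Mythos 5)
Your proof is correct and follows essentially the same route as the paper's: establish $b=\sca{\xi,a\eta}\in\J_F$ directly from $a\in\I_F$, then use the tensor identification $X_{\un{m}}\otimes_A X_{\un{m}'}\simeq X_{\un{m}+\un{m}'}$ and the inner-product formula to rewrite $\sca{\xi',b\eta'}$ as $\sca{\xi\xi',a(\eta\eta')}$ and invoke $\un{m}+\un{m}'\perp F$. The closing remark about passing to linear combinations and closures is a harmless addition the paper leaves implicit.
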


\begin{proof}
Let $b = \sca{\xi_{\un{m}}, a \eta_{\un{m}}}$ for $a \in \I_F$ and $\un{m} \perp F$.
Then $b \in \J_F$ as $a \in \I_F$.
Now let $\un{n} \perp F$.
Then for $\xi_{\un{n}}, \eta_{\un{n}} \in X_{\un{n}}$ we compute
\begin{align*}
\sca{\xi_{\un{n}}, b \eta_{\un{n}}}
& =
\sca{\xi_{\un{n}}, \sca{\xi_{\un{m}}, a \eta_{\un{m}}} \eta_{\un{n}}}
=
\sca{u_{\un{n}, \un{m}}( \xi_{\un{n}} \otimes \xi_{\un{m}}), u_{\un{n}, \un{m}}(a \eta_{\un{n}} \otimes \eta_{\un{m}})}
= 
\sca{\xi_{\un{m}} \xi_{\un{n}}, a \eta_{\un{m}} \eta_{\un{n}}}.
\end{align*}
Since $\un{m} +\un{n} \perp F$ and $a \in \I_F$ we also get that $\sca{\xi_{\un{n}}, b \eta_{\un{n}}} \in \J_F$, which completes the proof.
\end{proof}

%%%%%%%%%%%%%%%%%%%%%%%%%%%%%%%%
\begin{definition}
Let $X$ be a strong compactly aligned product system.
A representation $(\pi, t)$ of $X$ is called \emph{Cuntz-Nica-Pimsner} (or a \emph{CNP-repre\-sentation}) if it satisfies
\[
\sum \{ (-1)^{|\un{n}|} \psi_{\un{n}}(\phi_{\un{n}}(a)) \mid \un{n} \leq \un{1}_F \} = 0 \foral a \in \I_F,
\]
where $\psi_{\un{0}}(\phi_{\un{0}}(a)) = \pi(a)$.
The \emph{Cuntz-Nica-Pimsner algebra} $\N\O(X)$ is the universal C*-algebra with respect to the CNP-representations.
\end{definition}

In the following sections we will make precise that the CNP-representations above coincide with the ones introduced by Sims-Yeend \cite{SY11} when $X$ is strong compactly aligned.
As a consequence the universal Cuntz-Nica-Pimsner algebra will coincide with that considered in \cite{CLSV11, SY11}.

%%%%%%%%%%%%%%%%%%%%%%%%%%%%%%%%
\begin{examples}
We use the following three motivating examples of strong compactly aligned product systems:
\begin{enumerate}[labelindent=18pt,labelwidth=\widthof{\ref{last-item}},leftmargin=!]
\item If every $X_{\Bi}$ is regular then each $X_{\un{n}}$ is so.
In this case we have that $\J_F = A$ and therefore trivially $\I_F = A$.
In Section \ref{S:regular} we will show that the CNP-representations are in bijection with $(\pi,t)$ such that each $(\pi, t_{\un{n}})$ is covariant for $X_{\un{n}}$ in the sense of Katsura.
Thus $\N\O(X)$ coincides with $\O_X$ defined in \cite{Fow02}.
\item Regular product systems encode a number of constructions such as the higher rank row-finite graphs without sources of \cite{RSY03}.
In Section \ref{S:hrg} we give a full description of the higher rank graphs that produce strong compactly aligned product systems (that may or may not have sources).
Moreover we show in this case what is the form of the $\J_F$ and the $\I_F$.
\item In Section \ref{S:ds} we show how we can associate a product system to a semigroup action $\al \colon \bZ_+^N \to \End(A)$.
The ideals $\I_F$ are then given by
\[
\I_F = \bigcap_{\un{n} \perp F} \al_{\un{n}}^{-1} \Big(\big( \bigcap_{i \in F} \ker\al_\Bi \big)^\perp\Big) . 
\]
Nica-Pimsner algebras related to this object were considered in \cite[Section 4]{DFK14} and we will show later how they connect with the work herein.
 \label{last-item}
\end{enumerate}
\end{examples}

%%%%%%%%%%%%%%%%%%%%%%%%%%%%%%%%
\subsection{The cores of the representations}
%%%%%%%%%%%%%%%%%%%%%%%%%%%%%%%%

Given a Nica-covariant representation $(\pi, t)$ and $\un{m}, \un{m}' \in \bZ_+^N$ we write
\[
\B_{[\un{m}, \un{m} + \un{m}']} := \spn\{ \psi_{\un{n}}(k_{\un{n}}) \mid k_\un{n} \in \K X_{\un{n}}, \un{m} \leq \un{n} \leq \un{m} + \un{m}'\}
\]
and
\[
\B_{(\un{m}, \un{m} + \un{m}']} := \spn\{ \psi_{\un{n}}(k_{\un{n}}) \mid k_\un{n} \in \K X_{\un{n}}, \un{m} < \un{n} \leq \un{m} + \un{m}'\}.
\]
It is not hard to see that these $*$-algebras are closed in $\ca(\pi,t)$, e.g. \cite[Lemma 36]{CLSV11}.
Moreover we write
\[
\B_{[\un{m}, \infty]} := \ol{\spn} \{ \psi_{\un{n}}(k_{\un{n}}) \mid k_\un{n} \in \K X_{\un{n}}, \un{m} \leq \un{n} \}
\]
and
\[
\B_{(\un{m}, \infty]} := \ol{\spn} \{ \psi_{\un{n}}(k_{\un{n}}) \mid k_\un{n} \in \K X_{\un{n}}, \un{m} < \un{n} \}.
\]
We refer to these sets as the \emph{cores} of the representation $(\pi,t)$.
It follows from the work of Fowler \cite[Proposition 5.10]{Fow02} that if $(\pi,t)$ is a Nica-covariant representation of a compactly aligned product system $X$ then
\begin{equation}\label{eq:Fow}
t_{\un{m}}(X_{\un{m}})^* t_{\un{n}}(X_{\un{n}}) \subseteq \ol{t_{\un{n}'}(X_{\un{n}'}) t_{\un{m}'}(X_{\un{m}'})^*}
\end{equation}
for $\un{n}' = -\un{n} + \un{n} \vee \un{m}$ and $\un{m}' = - \un{m} + \un{n} \vee \un{m}$.
This imposes that the cores are stable under multiplying by elements on orthogonal support, i.e.,
\begin{equation}\label{eq:stable}
t_{\un{n}}(X_{\un{n}})^* \cdot \B_{[\un{m}, \un{m} + \un{m}']} \cdot t_{\un{n}}(X_{\un{n}}) \subseteq \B_{[\un{m}, \un{m} + \un{m}']} 
\foral \un{n} \perp \un{m} + \un{m}'.
\end{equation}
A Nica-covariant representation $(\pi,t)$ \emph{admits a gauge action} if there is a point-norm continuous family of $*$-automorphisms $\{\be_{\un{z}}\}_{\un{z} \in \bT^N}$ such that
\[
\be_{\un{z}}(t_\un{n}(\xi_{\un{n}})) = \un{z}^{\un{n}} \, t_{\un{n}}(\xi_{\un{n}}) 
\foral \xi_{\un{n}} \in X_{\un{n}}
\qand 
\be_{\un{z}}(\pi(a)) = \pi(a) \foral a \in A.
\]
In this case $\B_{[\un{0}, \infty]}$ is the fixed point algebra of $\ca(\pi,t)$.
By universality both $\N\T(X)$ and $\N\O(X)$ admit a gauge action.

%%%%%%%%%%%%%%%%%%%%%%%%%%%%%%%%
\section{Solving polynomial equations}\label{S:pol eq}
%%%%%%%%%%%%%%%%%%%%%%%%%%%%%%%%

In this section we show how the ideals $\I_F$ arise as solutions of polynomial equations.
Given a Nica-covariant representation $(\pi,t)$ of $X$, we use the system of c.a.i. $(k_{\Bi,\la})$ for $\K X_{\Bi}$ to define the elements $p_{\Bi, \la}$ and the projections $p_{\Bi}$ by
\begin{equation}\label{eq:p'i}
p_{\Bi, \la}: = \psi_{\Bi}(k_{\Bi,\la})
\qand
p_\Bi:= \textup{w*-}\lim_\la p_{\Bi, \la}.
\end{equation}
The next remark is an immediate consequence of Proposition \ref{P: sca ai}.

%%%%%%%%%%%%%%%%%%%%%%%%%%%%%%%%
\begin{proposition}\label{P:prod cai}
Let $X$ be a strong compactly aligned product system over $A$.
Suppose that $(\pi,t)$ is a Nica-covariant representation of $X$.
Let $p_{\Bi, \la}$ and $p_{\Bi}$ as defined in equation (\ref{eq:p'i}) and fix $\mt \neq F \subseteq [N]$.
Then 
\[
\nor{\cdot}\text{-}\lim_\la  \psi_{\un{n}}(k_{\un{n}}) \prod_{i \in F} p_{\Bi, \la}
=
\psi_{\un{n}}(k_{\un{n}}) \prod_{i \in F} p_{\Bi}
\foral k_{\un{n}} \in \K X_{\un{n}}.
\]
\end{proposition}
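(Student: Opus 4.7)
The plan is to telescope the product $\psi_{\un{n}}(k_{\un{n}})\prod_{i\in F}p_{\Bi,\la}$ into a single expression $\psi_{\un{m}}(\cdot)$ at level $\un{m}:=\un{n}\vee\un{1}_F$ using Nica-covariance, and then apply Proposition~\ref{P: sca ai} to pass to the norm limit. First, iterating the Nica-covariance identity $\psi_{\un{a}}(S)\psi_{\un{b}}(T)=\psi_{\un{a}\vee\un{b}}(S\vee T)$ across the factors $p_{\Bi,\la}=\psi_{\Bi}(k_{\Bi,\la})$ in a fixed order -- while using the second part of Proposition~\ref{P: sca ai} to guarantee that the intermediate extensions of $k_{\un{n}}$ remain compact -- one obtains
\[
\psi_{\un{n}}(k_{\un{n}})\prod_{i\in F}p_{\Bi,\la}=\psi_{\un{m}}\!\left(i_{\un{n}}^{\un{m}}(k_{\un{n}})\cdot i_{\un{1}_F}^{\un{m}}(e_{F,\la})\right),
\]
where $e_{F,\la}$ is the product in equation~(\ref{eq:efl}) taken in the same order as the left-hand side.

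The problem thus reduces to showing that $i_{\un{n}}^{\un{m}}(k_{\un{n}})\cdot i_{\un{1}_F}^{\un{m}}(e_{F,\la})\to i_{\un{n}}^{\un{m}}(k_{\un{n}})$ in norm. Proposition~\ref{P: sca ai} provides the analogous convergence with the order of multiplication reversed. Applying it to $k_{\un{n}}^*\in\K X_{\un{n}}$ and to $e_{F,\la}^*$ -- which is the same approximate identity product in reverse order, permissible by the remark preceding Proposition~\ref{P: sca ai} that equation~(\ref{eq:com F}) holds for any ordering -- and then taking adjoints yields the required convergence in norm. Since $\psi_{\un{m}}$ is a contractive $*$-homomorphism, $\psi_{\un{n}}(k_{\un{n}})\prod_{i\in F}p_{\Bi,\la}$ converges in norm to $\psi_{\un{m}}(i_{\un{n}}^{\un{m}}(k_{\un{n}}))$ inside $\ca(\pi,t)$.

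Finally, to identify this norm limit with $\psi_{\un{n}}(k_{\un{n}})\prod_{i\in F}p_{\Bi}$, one works inside $\ca(\pi,t)^{**}$. Because each factor $p_{\Bi,\la}$ depends only on the coordinate $\la_i$ of the product directed set $\La$, separate w*-continuity of multiplication together with boundedness of the nets $(p_{\Bi,\la})$ permits computing the w*-limit one coordinate at a time, yielding $\psi_{\un{n}}(k_{\un{n}})\prod_{i\in F}p_{\Bi,\la}\to\psi_{\un{n}}(k_{\un{n}})\prod_{i\in F}p_{\Bi}$ in the w*-topology. Since norm convergence implies w*-convergence and limits are unique, the two limits coincide. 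The main obstacle is handling the order reversal when invoking Proposition~\ref{P: sca ai}; the adjoint trick above resolves it, provided one carefully tracks the ordering of the factors in $e_{F,\la}$.
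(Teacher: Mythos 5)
Your proof is correct and follows essentially the same route as the paper: rewrite $\psi_{\un{n}}(k_{\un{n}})\prod_{i\in F}p_{\Bi,\la}$ via Nica-covariance as $\psi_{\un{m}}(i_{\un{n}}^{\un{m}}(k_{\un{n}})\,i_{\un{1}_F}^{\un{m}}(e_{F,\la}))$ for $\un{m}=\un{n}\vee\un{1}_F$, invoke Proposition \ref{P: sca ai} to get norm convergence, and conclude that the w*-limit defining $\psi_{\un{n}}(k_{\un{n}})\prod_{i\in F}p_{\Bi}$ must agree with it. Your adjoint trick for reconciling the order of the factors with the statement of Proposition \ref{P: sca ai}, and your iterated-limit justification of the w*-step, are details the paper leaves implicit (relying on the remark that equation (\ref{eq:com F}) holds in any order), but they are correct and do not change the argument.
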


\begin{proof}
Recall the definition of the $e_{F,\la}$ from equation (\ref{eq:efl}).
By Nica covariance we have that $\psi_{\un{n}}(k_{\un{n}}) \prod_{i \in F} p_{\Bi, \la} = \psi_{\un{m}}(k_{\un{n}} \vee e_{F, \la})$ for $\un{m} = \un{n} \vee \un{1}_F$.
But Proposition \ref{P: sca ai} yields that
\[
\nor{\cdot}\text{-}\lim_\la k_\un{n} \vee e_{F, \la} 
= 
\nor{\cdot}\text{-}\lim_\la i_{\un{n}}^{\un{m}}(k_{\un{n}}) \cdot i_{\un{1}_F}^{\un{m}}(e_{F, \la})
=
i_{\un{n}}^{\un{m}}(k_{\un{n}}).
\]
Therefore the net $(\psi_{\un{n}}(k_{\un{n}}) \prod_{i \in F} p_{\Bi, \la})_\la$ converges in norm to $\psi_{\un{m}}(i_{\un{n}}^{\un{m}}(k_{\un{n}}))$.
This means that its w*-limit $\psi_{\un{n}}(k_{\un{n}}) \prod_{i \in F} p_{\Bi}$ is also the norm limit.
\end{proof}

From now on we fix a Nica-covariant representation $(\pi,t)$ so that $\pi$ is injective.
In order to obtain the GIUT we will have to solve equations of the form
\begin{equation}\label{eq:in com}
\pi(a) \in \B_{(\un{0}, m \cdot \un{1}_F]} \text{ for } F \subseteq [N], m \in \bZ_+.
\end{equation}
Due to the structure of the cores, an element $\pi(a)$ satisfies equation (\ref{eq:in com}) if and only if there are $k_{\un{n}} \in \K X_{\un{n}}$ such that
\begin{equation}\label{eq:in}
\pi(a) + \sum \{ \psi_{\un{n}}(k_{\un{n}}) \mid \un{0} \neq \un{n} \leq m \cdot \un{1}_F \} = 0.
\end{equation}
Our strategy is to show that equation (\ref{eq:in}) implies both that
\begin{equation}\label{eq:out com}
\phi_{\un{n}}(a) \in \K X_{\un{n}} \foral \un{n} \leq \un{1},
\end{equation}
and that 
\begin{equation}\label{eq:out prod}
\pi(a) \prod_{i \in F} (I - p_{\Bi}) = 0,
\end{equation}
under the assumption that $X$ is strong compactly aligned.
Notice here that when equation (\ref{eq:out com}) is satisfied and $[r, s]$ is any interval inside $F$ then we get
\[
\pi(a) p_{\bo{r}} \cdots p_{\bo{s}} = \psi_{\un{n}}(\phi_{\un{n}}(a)) \text{ for } \un{n} = \bo{r} + \cdots + \bo{s}.
\]
Hence equation (\ref{eq:out prod}) is rewritten as
\begin{equation}
\pi(a) + \sum \{ (-1)^{|\un{n}|} \psi_{\un{n}}(\phi_{\un{n}}(a)) \mid \un{0} \neq \un{n} \leq \un{1}_F \} = 0
\end{equation}
which gives back (\ref{eq:in}).
As the latter is independent of the order we multiplied, the product in equation (\ref{eq:out prod}) is the same for any order within $F$. 
This line of reasoning gives that equations (\ref{eq:out com}) and (\ref{eq:out prod}), together, imply equation (\ref{eq:in com}).

%%%%%%%%%%%%%%%%%%%%%%%%%%%%%%%%
\begin{proposition}\label{P:out com}
Let $X$ be a strong compactly aligned product system over $A$.
Suppose that $(\pi,t)$ is a Nica-covariant representation of $X$ such that $\pi$ is injective.
If $\pi(a)$ satisfies
\begin{equation}
\pi(a) + \sum \{ \psi_{\un{n}}(k_{\un{n}}) \mid \un{0} \neq \un{n} \leq \un{m} \}= 0 \tag{\ref{eq:in}}
\end{equation}
for some $\un{m} \in \bZ_+^N$ then it satisfies
\begin{equation}
\phi_{\un{n}}(a) \in \K X_{\un{n}} \; \foral \, \un{0} \neq \un{n} \leq \un{1}.  \tag{\ref{eq:out com}}
\end{equation}
\end{proposition}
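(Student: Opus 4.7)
The plan is to reduce, for each fixed $\un{0} \neq \un{n}_0 \leq \un{1}$ with $F := \supp \un{n}_0$ (so $\un{n}_0 = \un{1}_F$), the verification of $\phi_{\un{n}_0}(a) \in \K X_{\un{n}_0}$ to the extraction of a norm-convergent approximation. The tool is to right-multiply equation (\ref{eq:in}) by the net $P_\la := \prod_{i \in F} p_{\bo{i},\la}$ and exploit Proposition \ref{P:prod cai} to convert a strictly-convergent product into a norm-convergent one.

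First I would multiply both sides of (\ref{eq:in}) on the right by $P_\la$. Each summand $\psi_{\un{n}'}(k_{\un{n}'}) P_\la$ with $\un{0} \neq \un{n}' \leq \un{m}$ converges in norm by Proposition \ref{P:prod cai}; since the sum is finite, $\pi(a) P_\la$ is itself norm-Cauchy, and I denote its limit by $L \in \ca(\pi,t)$. Next I identify $\pi(a) P_\la$ in a more useful form: by Nica-covariance, $P_\la = \psi_{\un{1}_F}(e_{F,\la})$, where $e_{F,\la}$ is the element of equation (\ref{eq:efl}), and Proposition \ref{P: sca ai} gives $e_{F,\la} \in \K X_{\un{1}_F}$. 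Using the standard identity $\pi(a)\psi_{\un{1}_F}(k) = \psi_{\un{1}_F}(\phi_{\un{1}_F}(a)k)$ for $k \in \K X_{\un{1}_F}$, I conclude
\[
\pi(a) P_\la = \psi_{\un{1}_F}\big(\phi_{\un{1}_F}(a)\, e_{F,\la}\big).
\]

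Now I would exploit that $\pi$ is injective, whence $\psi_{\un{1}_F}$ is injective as well, to transport norm convergence back to the module: the sequence $\phi_{\un{1}_F}(a) e_{F,\la}$ is norm-Cauchy in $\L X_{\un{1}_F}$, and since each of its terms lies in the two-sided ideal $\K X_{\un{1}_F}$, so does the norm limit $T$. On the other hand, the comments preceding Proposition \ref{P: sca ai} show that $e_{F,\la}$ tends strictly to $\id_{X_{\un{1}_F}}$, hence $\phi_{\un{1}_F}(a) e_{F,\la}$ tends strictly to $\phi_{\un{1}_F}(a)$. Uniqueness of limits in the strict topology then forces $T = \phi_{\un{1}_F}(a)$, proving $\phi_{\un{n}_0}(a) \in \K X_{\un{n}_0}$. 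Running this argument over each $\un{0} \neq \un{n}_0 \leq \un{1}$ yields (\ref{eq:out com}).

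The routine part is the manipulation of cores and Nica-covariance; the subtle point is the interplay between norm and strict convergence at the final step, which is what actually promotes ``$\phi_{\un{1}_F}(a)$ acts like a compact modulo the approximate identity'' to ``$\phi_{\un{1}_F}(a)$ is itself compact.'' This in turn crucially uses strong compact alignment, both to guarantee $e_{F,\la} \in \K X_{\un{1}_F}$ and to apply Proposition \ref{P:prod cai} to the individual summands; without it, the net $\pi(a) P_\la$ would not in general converge in norm.
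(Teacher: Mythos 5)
Your proof is correct and follows essentially the same route as the paper: both multiply equation (\ref{eq:in}) on the right by the net of projections built from approximate units of the $\K X_{\Bi}$ and invoke Proposition \ref{P:prod cai} to upgrade the w*-limit to a norm limit, then identify that limit with the image of $\phi(a)$. The only cosmetic differences are that the paper treats singletons $F=\{i\}$ and bootstraps to all $\un{0}\neq\un{n}\leq\un{1}$ via strong compact alignment, and identifies the limiting compact by testing against vectors $t_{\Bi}(\xi_{\Bi})$ and using injectivity of $t_{\Bi}$, rather than by pulling back through the isometry $\psi_{\un{1}_F}$ and appealing to uniqueness of strict limits.
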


\begin{proof}
By introducing zeros, without loss of generality we may assume that $\un{m} = m \cdot \un{1} = (m, \dots, m)$.
It suffices to show that $\phi_{\Bi}(a) \in \K X_{\Bi}$ for all $i \in [N]$.
By Proposition \ref{P:prod cai} we derive
\begin{align*}
\nor{\cdot}\text{-}\lim_\la \pi(a) p_{\Bi, \la}
& = 
- \sum \{ \nor{\cdot}\text{-}\lim_\la \psi_{\un{n}}(k_{\un{n}}) p_{\Bi, \la} \mid \un{0} \neq \un{n} \leq m \cdot \un{1} \} \\
& = 
- \sum \{ \psi_{\un{n}}(k_{\un{n}}) p_{\Bi} \mid \un{0} \neq \un{n} \leq m \cdot \un{1} \}.
\end{align*}
Thus the w*-limit $\pi(a) p_{\Bi}$ of the net $(\pi(a) p_{\Bi, \la})_\la$ coincides with its norm-limit.
However the net $(\pi(a) p_{\Bi, \la})_\la$ is in $\psi_\Bi(\K X_\Bi)$ and thus $\pi(a) p_{\Bi} \in \psi_{\Bi}(\K X_{\Bi})$.
Let $k_\Bi \in \K X_\Bi$ such that $\pi(a) p_{\Bi} = \psi_\Bi(k_\Bi)$.
Then for every $\xi_\Bi \in \K X_\Bi$ we have
\begin{align*}
t_{\Bi}(\phi_{\Bi}(a) \xi_{\Bi})
=
\pi(a) t_{\Bi}(\xi_{\Bi})
=
\pi(a) (p_{\Bi} t_{\Bi} (\xi_{\Bi}))
=
\psi_{\Bi}(k_{\Bi}) t(\xi_{\Bi})
=
t_{\Bi} ( k_{\Bi} \xi_{\Bi}).
\end{align*}
As $\pi$, and thus $t_{\Bi}$, are isometric we have that $\phi_{\Bi}(a) = k_\Bi$. 
The discussion preceding Proposition \ref{P:rem inv IF} finishes the proof.
\end{proof}

%%%%%%%%%%%%%%%%%%%%%%%%%%%%%%%%
\begin{proposition}\label{P:out prod}
Let $X$ be a strong compactly aligned product system over $A$.
Suppose that $(\pi,t)$ is a Nica-covariant representation of $X$.
If $\pi(a)$ satisfies
\begin{equation}
\pi(a) + \sum \{ \psi_{\un{n}}(k_{\un{n}}) \mid \un{0} \neq \un{n} \leq \un{m} \}= 0 \tag{\ref{eq:in}}
\end{equation}
for some $\un{m} \in \bZ_+^N$, then it satisfies
\begin{equation}
\pi(a) \prod_{i \in F} (I - p_{\Bi}) = 0 \tag{\ref{eq:out prod}} \, \text{ for } \, F = \supp \un{m}.
\end{equation}
\end{proposition}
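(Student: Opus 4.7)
The plan is to multiply the hypothesis
\[
\pi(a) + \sum \{ \psi_{\un{n}}(k_{\un{n}}) \mid \un{0} \neq \un{n} \leq \un{m} \}= 0
\]
on the right by $Q_F := \prod_{i \in F}(I - p_{\Bi})$ and show that every summand on the left vanishes, leaving $\pi(a) Q_F = 0$. For this I need two auxiliary facts: that the $p_{\Bi}$'s pairwise commute, and a pointwise absorption property showing that each $\psi_{\un{n}}(k_{\un{n}})$ is annihilated from the right by at least one factor of $Q_F$.

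For commutativity, I would first note that for $i \neq j$ Nica-covariance gives
\[
p_{\Bi, \la} p_{\bo{j}, \la'} = \psi_{\Bi + \bo{j}}\bigl( i_{\Bi}^{\Bi + \bo{j}}(k_{\Bi, \la}) \cdot i_{\bo{j}}^{\Bi + \bo{j}}(k_{\bo{j}, \la'}) \bigr),
\]
and the symmetric expression for $p_{\bo{j}, \la'} p_{\Bi, \la}$; since the images of $i_{\Bi}^{\Bi+\bo{j}}$ and $i_{\bo{j}}^{\Bi+\bo{j}}$ commute in $\L X_{\Bi+\bo{j}}$ (a standard consequence of the product-system associativity), these agree. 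Passing to w*-limits yields $p_{\Bi} p_{\bo{j}} = p_{\bo{j}} p_{\Bi}$, so $Q_F$ may be reordered freely.

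The key absorption step is: for every $\un{0} \neq \un{n} \in \bZ_+^N$, every $k_{\un{n}} \in \K X_{\un{n}}$, and every $i \in \supp \un{n}$,
\[
\psi_{\un{n}}(k_{\un{n}})\, (I - p_{\Bi}) = 0.
\]
Indeed, Nica-covariance gives $\psi_{\un{n}}(k_{\un{n}}) \psi_{\Bi}(k_{\Bi, \la}) = \psi_{\un{n}}\bigl( k_{\un{n}} \cdot i_{\Bi}^{\un{n}}(k_{\Bi, \la}) \bigr)$ since $\un{n} \vee \Bi = \un{n}$. Because $i \in \supp \un{n}$, the comment preceding equation (\ref{eq:com F}) shows that $i_{\Bi}^{\un{n}}(k_{\Bi, \la})$ converges strictly to $\id_{X_{\un{n}}}$, so $k_{\un{n}} \cdot i_{\Bi}^{\un{n}}(k_{\Bi, \la}) \to k_{\un{n}}$ in norm. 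Hence $\psi_{\un{n}}(k_{\un{n}})\, p_{\Bi, \la}$ converges in norm to $\psi_{\un{n}}(k_{\un{n}})$, and the w*-limit $\psi_{\un{n}}(k_{\un{n}})\, p_{\Bi}$ must agree with it.

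Combining: for each $\un{n}$ in the sum, $\un{0} \neq \un{n} \leq \un{m}$ forces $\mt \neq \supp \un{n} \subseteq \supp \un{m} = F$, so we may pick $i_0 \in \supp \un{n} \cap F$. Using commutativity to write $Q_F = (I - p_{\bo{i_0}})\, \prod_{i \in F \setminus \{i_0\}}(I - p_{\Bi})$, the absorption step gives $\psi_{\un{n}}(k_{\un{n}}) Q_F = 0$. Right-multiplying the hypothesis by $Q_F$ therefore collapses the entire sum, leaving $\pi(a) Q_F = 0$. The only real obstacle is establishing commutativity of the $p_{\Bi}$'s and the strict-convergence absorption; both are standard consequences of Nica-covariance and the product-system axioms, so no new use of strong compact alignment is required in this step.
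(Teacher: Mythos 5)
Your overall strategy---commute the factors of $\prod_{i\in F}(I-p_{\Bi})$ so that a factor $(I-p_{\bo{i_0}})$ with $i_0\in\supp\un{n}$ sits next to $\psi_{\un{n}}(k_{\un{n}})$, then absorb---is sound, and your absorption step is correct (it is exactly the content of equation (\ref{eq:com F}) and Proposition \ref{P: sca ai}). The gap is in your justification of $p_{\Bi}p_{\bo{j}}=p_{\bo{j}}p_{\Bi}$: it is \emph{not} true that the images of $i_{\Bi}^{\Bi+\bo{j}}$ and $i_{\bo{j}}^{\Bi+\bo{j}}$ commute in $\L X_{\Bi+\bo{j}}$, and this does not follow from associativity. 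A product system over $\bZ_+^2$ is determined by $X_{\bo{1}}$, $X_{\bo{2}}$ together with an \emph{arbitrary} unitary bimodule flip $W=u_{\bo{1},\bo{2}}^*u_{\bo{2},\bo{1}}\colon X_{\bo{2}}\otimes X_{\bo{1}}\to X_{\bo{1}}\otimes X_{\bo{2}}$; under the identification $X_{\bo{1}+\bo{2}}\simeq X_{\bo{1}}\otimes X_{\bo{2}}$ one has $i_{\bo{1}}^{\bo{1}+\bo{2}}(S)=S\otimes\id$ but $i_{\bo{2}}^{\bo{1}+\bo{2}}(T)=W(T\otimes\id)W^*$. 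For instance, take $A=\bC$, $X_{\bo{1}}=X_{\bo{2}}=\bC^2$ and $W$ the identity of $\bC^2\otimes\bC^2$ (not the tensor flip): this is a strong compactly aligned product system in which $[\,i_{\bo{1}}^{\bo{1}+\bo{2}}(S),i_{\bo{2}}^{\bo{1}+\bo{2}}(T)\,]=[S,T]\otimes\id\neq 0$ in general. So the lemma you invoke is false, even though the conclusion you draw from it is true.

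The commutation of the $p_{\Bi}$'s can be repaired: both ordered products $i_{\Bi}^{\Bi+\bo{j}}(k_{\Bi,\la})\,i_{\bo{j}}^{\Bi+\bo{j}}(k_{\bo{j},\la'})$ and their reverses are bounded nets in $\K X_{\Bi+\bo{j}}$ (by compact alignment) converging strictly to $\id_{X_{\Bi+\bo{j}}}$, so $p_{\Bi}p_{\bo{j}}$ and $p_{\bo{j}}p_{\Bi}$ are both equal to the orthogonal projection onto $\ol{\psi_{\Bi+\bo{j}}(\K X_{\Bi+\bo{j}})H}$; equivalently, $p_{\Bi}p_{\bo{j}}$ maps $H$ into this subspace and restricts to the identity on it, hence is a contractive idempotent, hence a self-adjoint projection equal to its adjoint $p_{\bo{j}}p_{\Bi}$. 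With that fixed, your argument goes through and is genuinely different from the paper's: the paper never commutes the $p_{\Bi}$'s, but expands the ordered product $(I-p_{\bo{1}})\cdots(I-p_{\bo{r}})$ one factor at a time, using strong compact alignment through Proposition \ref{P:prod cai} to keep every intermediate term of the form $\psi_{\un{n}'}(k')$ with $k'$ compact and $\un{n}'\geq\un{n}$, until a factor indexed by $\supp\un{n}$ annihilates everything. Your route, once the commutation is justified correctly, is shorter and shows that this particular proposition does not in fact use strong compact alignment---but as written the key step rests on a false claim.
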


\begin{proof}
Without loss of generality assume that $F = \{1,...,r\}$.
It suffices to show that
\begin{equation}\label{eq:for proof}
\psi_{\un{n}}(k_{\un{n}}) \prod_{i =1}^r (I - p_{\Bi}) = 0
\end{equation}
for every $\un{0} \neq \un{n} \leq \un{m}$.
Once this is shown then we can directly verify that
\begin{align*}
\pi(a) \prod_{i =1}^r (I - p_{\Bi})
& =
- \sum \{ \psi_{\un{n}}(k_{\un{n}}) \prod_{i =1}^r (I - p_{\Bi}) \mid \un{0} \neq \un{n} \leq \un{m} \}= 0.
\end{align*}
In order to show equation (\ref{eq:for proof}), set
\[
p := \prod_{i =1}^r (I - p_{\Bi})
\]
and fix a non-zero $\un{n} \leq \un{m}$.
By Proposition \ref{P: sca ai}, if $1 \in \supp \un{n}$ then $\psi_{\un{n}}(k_{\un{n}}) (I - p_{\bo{1}}) = 0$ and thus $\psi_{\un{n}}(k_{\un{n}}) p =0$.
Otherwise
\[
\psi_{\un{n}}(k_{\un{n}})(I - p_{\bo{1}}) = \psi_{\un{n}}(k_{\un{n}}) - \psi_{\un{n}}(k_{\un{n}})p_{\bo{1}}
\]
and the second summand is in $\psi_{\un{n} + \bo{1}}(\K X_{\un{n} + \bo{1}})$ by Proposition \ref{P:prod cai}.
Likewise in the second step, if $2 \in \supp \un{n}$ then $2 \in \supp (\un{n} + \bo{1})$.
Hence
\[
\psi_{\un{n}}(k_{\un{n}})(I-p_{\bo{2}}) = 0 \qand \psi_{\un{n}}(k_{\un{n}})p_{\bo{1}}(I - p_{\bo{2}}) = 0
\]
and so $\psi_{\un{n}}(k_{\un{n}}) p = 0$.
Otherwise we move on to consider
\begin{align*}
\psi_{\un{n}}(k_{\un{n}})(I - p_{\bo{1}})(I- p_{\bo{2}})(I - p_{\bo{3}})
& =
\psi_{\un{n}}(k_{\un{n}})(I - p_{\bo{3}})  + \psi_{\un{n}}(k_{\un{n}})p_{\bo{1}}p_{\bo{2}}(I - p_{\bo{3}}) -\\
& \hspace{2cm} 
- \psi_{\un{n}}(k_{\un{n}})p_{\bo{1}}(I - p_{\bo{3}})
- \psi_{\un{n}}(k_{\un{n}})p_{\bo{1}}(I - p_{\bo{3}}).
\end{align*}
Eventually there will be an $s \in \supp \un{n} \cap F$ that must also be in the support of every $\un{n} + \sum_{i=1}^l \Bi$ for $l \leq s$, giving
\[
\psi_{\un{n}}(k_{\un{n}})(I - p_{\bo{1}}) \cdots (I - p_{\bo{s}}) = 0
\]
which completes the proof of the claim.
\end{proof}

%%%%%%%%%%%%%%%%%%%%%%%%%%%%%%%%
\begin{proposition}\label{P:in IF}
Let $X$ be a strong compactly aligned product system over $A$.
Suppose that $(\pi,t)$ is a Nica-covariant representation of $X$ such that $\pi$ is injective.
If $\pi(a) \in \B_{(\un{0}, \un{m}]}$ then $a \in \I_F$ for $F = \supp \un{m}$.
\end{proposition}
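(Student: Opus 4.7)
The plan is to turn the hypothesis $\pi(a) \in \B_{(\un{0}, \un{m}]}$ into an equation of the shape (\ref{eq:in}), so that Propositions \ref{P:out com} and \ref{P:out prod} apply and deliver both $\phi_\un{n}(a) \in \K X_\un{n}$ for every $\un{n} \leq \un{1}$ and $\pi(a) \prod_{i \in F}(I - p_\Bi) = 0$ with $F = \supp \un{m}$. Since $\B_{(\un{0}, \un{m}]}$ is closed in $\ca(\pi,t)$, I can write $\pi(a) = \sum_{\un{0} \neq \un{n} \leq \un{m}} \psi_\un{n}(k_\un{n})$ with $k_\un{n} \in \K X_\un{n}$, which after a sign change is exactly the hypothesis of those two propositions.

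Next I would deduce membership in $\J_F$. The intersection $\bigcap_{\un{n} \leq \un{1}} \phi_\un{n}^{-1}(\K X_\un{n})$ is handed to me directly by Proposition \ref{P:out com}. For the annihilator factor $(\bigcap_{\Bi \in F} \ker \phi_\Bi)^\perp$, the key identity is $\psi_\Bi(k)\pi(b) = \psi_\Bi(k \phi_\Bi(b))$. Hence whenever $b \in \bigcap_{\Bi \in F} \ker\phi_\Bi$ one gets $p_\Bi \pi(b) = 0$ for every $\Bi \in F$, and iterating yields $\prod_{i \in F}(I - p_\Bi) \pi(b) = \pi(b)$. Multiplying the relation $\pi(a) \prod_{i \in F}(I - p_\Bi) = 0$ on the right by $\pi(b)$ then forces $\pi(ab) = 0$; injectivity of $\pi$ gives $ab = 0$ and hence $a \in (\bigcap_{\Bi \in F} \ker\phi_\Bi)^\perp$. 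Combined with the compact-action conclusion this places $a$ inside $\J_F$.

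The final step is to verify the $F^\perp$-invariance that distinguishes $\I_F$ from $\J_F$: given $\un{m}' \perp F$ and $\xi, \eta \in X_{\un{m}'}$, I need $c := \sca{\xi, a \eta} \in \J_F$. The natural move is to observe $\pi(c) = t_{\un{m}'}(\xi)^* \pi(a) t_{\un{m}'}(\eta)$ and push the $t_{\un{m}'}$-operators through the expansion of $\pi(a)$. Since every $\un{n}$ with $\un{0} \neq \un{n} \leq \un{m}$ is perpendicular to $\un{m}'$, the stability property (\ref{eq:stable}) applied to the degenerate interval $[\un{n}, \un{n}]$, which is built on Fowler's formula (\ref{eq:Fow}), forces each summand $t_{\un{m}'}(\xi)^* \psi_\un{n}(k_\un{n}) t_{\un{m}'}(\eta)$ to remain inside $\psi_\un{n}(\K X_\un{n})$. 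Thus $\pi(c) \in \B_{(\un{0}, \un{m}]}$, and I can re-run the argument above verbatim with $c$ in place of $a$ to conclude $c \in \J_F$. This gives $\sca{X_{\un{m}'}, a X_{\un{m}'}} \subseteq \J_F$ for every $\un{m}' \perp F$, which is precisely the defining condition for $a \in \I_F$.

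The main subtlety, in my view, is invoking (\ref{eq:stable}) at the right level of granularity: one needs conjugation by $t_{\un{m}'}(X_{\un{m}'})$ to preserve each individual fiber $\psi_\un{n}(\K X_\un{n})$ rather than merely the aggregate $\B_{[\un{0}, \un{m}]}$, since only then does the rewritten $\pi(c)$ still sit in $\B_{(\un{0}, \un{m}]}$ with the same support restriction. Once this is isolated, the rest is a clean second application of Propositions \ref{P:out com} and \ref{P:out prod}.
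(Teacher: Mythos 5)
Your argument is correct and follows essentially the same route as the paper: reduce to an equation of the form (\ref{eq:in}), apply Propositions \ref{P:out com} and \ref{P:out prod}, kill $\bigcap_{i\in F}\ker\phi_{\Bi}$ using $p_{\Bi}\pi(b)=0$ together with injectivity of $\pi$, and obtain $F^\perp$-invariance by conjugating with $t_{\un{m}'}(X_{\un{m}'})$ and invoking the fiber-wise form of (\ref{eq:stable}) before re-running the first part. The only cosmetic difference is that the paper first rewrites $\pi(a)$ inside $\B_{(\un{0},\un{1}_F]}$ via the alternating sum with $k_{\un{n}}=\phi_{\un{n}}(a)$ before conjugating, whereas you conjugate the original expansion in $\B_{(\un{0},\un{m}]}$ directly; both work since every $\un{n}\leq\un{m}$ has support in $F$ and is therefore perpendicular to $\un{m}'$.
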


\begin{proof}
By adding zeros we may assume that $\un{m} = m \cdot \un{1}_F$ for 
\[
m := \max\{|m_i| \mid i \in F\}.
\]
By Propositions \ref{P:out com} and \ref{P:out prod} we have that $\phi_{\un{n}}(a) \in \K X_{\un{n}}$ for all $\un{n} \leq \un{1}$ and
\[
\pi(a) \prod_{i \in F} (I - p_\Bi) = 0.
\]
Moreover notice that $\pi(b) p_{\Bi} = 0$ whenever $b \in \ker \phi_{\Bi}$, so that $\pi(b)(I - p_{\Bi}) = \pi(b)$. 
Indeed, this is because
\[
\pi(b) p_{\Bi} = \text{w*-}\lim_\la \pi(b) \psi_{\Bi}(k_{\Bi, \la}) = \text{w*-}\lim_\la \psi_{\Bi}(\phi_{\Bi}(b) k_{\Bi, \la}) = 0.
\]
Therefore if $b \in \bigcap_{i \in F} \ker \phi_{\Bi}$ then
\[
0 = \pi(b) \pi(a) \prod_{i \in F} (I - p_{\Bi}) = \pi(ba).
\]
As $\pi$ is injective we obtain that $ba = 0$ which implies that $a \perp \bigcap_{i \in F} \ker \phi_{\Bi}$.
This shows that $a \in \J_F$.
It remains to show that $\sca{X_{\un{x}}, a X_{\un{x}}} \subseteq \J_F$ for every $\un{x} \perp F$.
Recall that equation (\ref{eq:out prod}) can now be written as
\[
\pi(a) + \sum \{ (-1)^{|\un{n}|} \psi_{\un{n}}(k_{\un{n}}) \mid \un{0} \neq \un{n} \leq \un{1}_F \}= 0
\]
for $k_{\un{n}} = \phi_{\un{n}}(a)$, so that $\pi(a) \in \B_{(\un{0}, \un{1}_F]}$.
However, as $\un{n} \leq \un{1}_F$ then $\un{n} \perp \un{x}$, and equation (\ref{eq:stable}) yields
\[
t_{\un{x}}(X_{\un{x}})^* \psi_{\un{n}}(\K X_{\un{n}}) t_{\un{x}}(X_{\un{x}}) \subseteq \psi_{\un{n}}(\K X_{\un{n}}).
\]
Therefore, for $\xi_{\un{x}}, \eta_{\un{x}} \in X_{\un{x}}$, we get that
\[
\pi(\sca{\xi_{\un{x}}, a \eta_{\un{x}}}) = t_{\un{x}}(\xi_{\un{x}})^* \pi(a) t_{\un{x}}(\eta_{\un{x}}) \in  \B_{(\un{0}, \un{1}_F]}.
\]
Then the first part of the proof gives that $\sca{\xi_{\un{x}}, a \eta_{\un{x}}} \in \J_F$, which completes the proof.
\end{proof}

%%%%%%%%%%%%%%%%%%%%%%%%%%%%%%%%
\begin{remark}
The essential part for Proposition \ref{P:in IF} is to argue that, for appropriate $f \in \ca(\pi,t)$, the w*-limit of nets $(f p_{\Bi, \la})_\la$ is in fact a norm-limit.
Solving polynomial equations becomes easier if the $(p_{\Bi, \la})_\la$ already converges in norm to $p_{\Bi}$.
This is the case for a large number of higher rank examples; in particular when every $\K X_{\Bi}$ has a unit.
\end{remark}

%%%%%%%%%%%%%%%%%%%%%%%%%%%%%%%%
\section{CNP-relations and the Gauge Invariant Uniqueness Theorem}\label{S:CNP-rep}
%%%%%%%%%%%%%%%%%%%%%%%%%%%%%%%%

%%%%%%%%%%%%%%%%%%%%%%%%%%%%%%%%
\subsection{CNP-representations of Sims-Yeend}\label{Ss:SY}
%%%%%%%%%%%%%%%%%%%%%%%%%%%%%%%%

One of the key elements of the GIUT is to provide a CNP-rep\-resentation $(\pi,t)$ that admits a gauge action and $\pi$ is injective. 
Sims and Yeend \cite{SY11} established the embedding of $A$ in the Cuntz-Nica-Pimsner algebras by using the augmented Fock space representation.
They achieve this in much greater generality, i.e., for product systems over a quasi-lattice ordered semigroup, but their CNP relations involve infinitely many fibers. 
We will compare our CNP-relations with those of Sims-Yeend, and use their result to show that $A$ always embeds into $\N\O(X)$.
The main point is that they coincide for strong compactly aligned product systems.
First we recall the terminology from \cite{SY11}.
Let 
\[
\I_{\un{0}} := A 
\qand 
\I_{\un{n}} = \bigcap \{ \ker \phi_{\un{m}} \mid \un{0} \neq \un{m} \leq \un{n} \}
\text{ for } \un{n} \in \bZ_+^N,
\]
all of which are ideals in $A$. 
For $\un{\ell} \in \bZ_+^N$, let
\[
\widetilde{X}_{\un{\ell}} = \bigoplus \{ X_{\un{m}} \I_{\un{\ell}-\un{m}} \mid \un{m} \leq \un{\ell} \}
\]
and write $\widetilde{\phi_{\un{\ell}}}$ for the left action on $\widetilde{X}_{\un{\ell}}$. 
Consequently, for $\un{n} \leq \un{\ell}$ we obtain a $*$-homomor\-phism 
\[
\widetilde{i}_{\un{n}}^{\un{\ell}} \colon \L X_{\un{n}} \rightarrow \L \widetilde{X}_{\un{\ell}}
\; \textup{ with } \;
\widetilde{i}_{\un{n}}^{\un{\ell}} := \oplus \{ i_{\un{n}}^{\un{m}} \mid \un{n} \leq \un{m} \leq \un{\ell} \}.
\]
A Nica-covariant representation $(\pi,t)$ of $X$ will be called \emph{Sims--Yeend CNP} if:
\begin{quoting}
\emph{$\sum \{ \psi_{\un{n}}(k_{\un{n}}) \mid \un{n}\in \F \} = 0$ for any finite set $\F \subseteq \bZ_+^N$, and every choice $\{k_{\un{n}} \in \K(X_{\un{n}}) \mid  \un{n}\in \F \}$ such that $\sum \{ \, \widetilde{i}_{\un{n}}^{\un{\ell}}(k_{\un{n}}) \mid \un{n}\in \F \, \} = 0$ for large $\un{\ell}$.} 
\end{quoting}
Since $\bZ_+^N$ is an Ore semigroup, the phrase ``for large $\un{\ell}$" means that for every $\un{s}$ there exists an $\un{s} \leq \un{r} \in \bZ_+^N$ such that the above property holds for all $\un{\ell} \geq \un{r}$.

%%%%%%%%%%%%%%%%%%%%%%%%%%%%%%%%
\begin{proposition} \label{P:SYCNP-CNP}
Let $X$ be a strong compactly aligned product system, and let $(\pi,t)$ be a Nica-covariant representation of $X$. 
If $(\pi,t)$ is a Sims-Yeend CNP-representation, then it is a CNP-representation.
\end{proposition}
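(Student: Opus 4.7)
The plan is to deduce the finitely-many-fiber CNP relation of the paper from the Sims--Yeend axiom by exhibiting an explicit finite $\F \subseteq \bZ_+^N$ and compact kernels that witness it. Fix $\mt \neq F \subseteq [N]$ and $a \in \I_F$. Since $a \in \J_F$, the operator $\phi_{\un{n}}(a)$ is compact for every $\un{n} \leq \un{1}$, and in particular for every $\un{n} \leq \un{1}_F$. Set
\[
\F := \{\un{n} \in \bZ_+^N : \un{n} \leq \un{1}_F\} \qand k_{\un{n}} := (-1)^{|\un{n}|} \phi_{\un{n}}(a) \in \K X_{\un{n}},
\]
with the convention $k_{\un{0}} = a \in A = \K X_{\un{0}}$. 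Feeding $(\F, \{k_{\un{n}}\})$ into the Sims--Yeend hypothesis returns exactly the identity $\sum_{\un{n} \leq \un{1}_F}(-1)^{|\un{n}|} \psi_{\un{n}}(\phi_{\un{n}}(a)) = 0$, so I am reduced to checking that $\sum_{\un{n} \in \F} \widetilde{i}^{\un{\ell}}_{\un{n}}(k_{\un{n}}) = 0$ in $\L \widetilde{X}_{\un{\ell}}$ for every $\un{\ell} \geq \un{1}_F$.

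I would check this summand-by-summand in the decomposition $\widetilde{X}_{\un{\ell}} = \bigoplus_{\un{m} \leq \un{\ell}} X_{\un{m}} \I_{\un{\ell}-\un{m}}$. The operator $\widetilde{i}^{\un{\ell}}_{\un{n}}(k_{\un{n}})$ is zero on $X_{\un{m}} \I_{\un{\ell}-\un{m}}$ unless $\un{n} \leq \un{m}$, and in that case it acts as $i^{\un{m}}_{\un{n}}(\phi_{\un{n}}(a)) = \phi_{\un{m}}(a)$. Writing $\un{m} \wedge \un{1}_F = \un{1}_{F'}$ with $F' = F \cap \supp \un{m}$, the restriction of the whole sum to the $\un{m}$-summand collapses to
\[
\Big( \sum_{\un{n} \leq \un{1}_{F'}} (-1)^{|\un{n}|} \Big) \phi_{\un{m}}(a) \big|_{X_{\un{m}} \I_{\un{\ell}-\un{m}}} = \prod_{i \in F'}(1-1) \cdot \phi_{\un{m}}(a) \big|_{X_{\un{m}} \I_{\un{\ell}-\un{m}}},
\]
which vanishes whenever $F' \neq \mt$, i.e., whenever $\un{m}$ meets $F$.

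The only remaining summands are those with $\un{m} \perp F$, where the coefficient equals $1$; this is the crux. Here I need $\phi_{\un{m}}(a)$ to annihilate $X_{\un{m}} \I_{\un{\ell}-\un{m}}$, and I would argue as follows. Since $\un{m} \perp F$ and $\un{\ell} \geq \un{1}_F$, every $F$-component of $\un{\ell}-\un{m}$ is at least $1$, so $\I_{\un{\ell}-\un{m}} \subseteq \bigcap_{\Bi \in F} \ker \phi_{\Bi}$. Because $\I_F$ is a $*$-ideal, $a^*a \in \I_F$, and the defining invariance clause gives $\langle \xi, a^*a\, \xi\rangle \in \J_F$ for every $\xi \in X_{\un{m}}$. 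Since $\J_F \subseteq \big(\bigcap_{\Bi \in F} \ker\phi_{\Bi}\big)^\perp$, the product $\langle \xi, a^*a\, \xi\rangle \cdot b$ vanishes for any $b \in \I_{\un{\ell}-\un{m}}$; consequently $\|a\xi b\|^2 = \|b^* \langle a\xi, a\xi\rangle b\| = 0$, so $\phi_{\un{m}}(a)(\xi b) = 0$ on the generating set of $X_{\un{m}} \I_{\un{\ell}-\un{m}}$, as required.

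The main obstacle I expect is not really technical but conceptual: one must recognize that the two clauses built into $\I_F$ --- the compactness-and-orthogonality defining $\J_F$, together with the $F^\perp$-invariance cutting $\J_F$ down to $\I_F$ --- are precisely calibrated to neutralize the $\un{m} \perp F$ summands of $\widetilde{X}_{\un{\ell}}$, while the alternating-sum cancellation handles the summands that meet $F$. Once this matching is seen, the verification reduces to the bookkeeping above and no further input is needed.
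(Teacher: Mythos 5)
Your proposal is correct and follows essentially the same route as the paper's proof: the same choice of finite set $\F = \{\un{n} \leq \un{1}_F\}$ with $k_{\un{n}} = (-1)^{|\un{n}|}\phi_{\un{n}}(a)$, the same summand-by-summand analysis of $\widetilde{X}_{\un{\ell}}$ splitting into the alternating-sum cancellation when $\un{m}$ meets $F$ and the annihilation of $X_{\un{m}}\I_{\un{\ell}-\un{m}}$ when $\un{m} \perp F$ via $\J_F \perp \bigcap_{\Bi\in F}\ker\phi_{\Bi}$. The only cosmetic difference is that you pass through $a^*a$ and a norm computation where the paper directly evaluates $\sca{\xi_{\un{m}}c, a\eta_{\un{m}}d} = c^*\sca{\xi_{\un{m}}, a\eta_{\un{m}}}d = 0$.
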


\begin{proof}
Fix $\mt \neq F\subseteq [N]$. 
By definition, every $a\in \I_F$ satisifies $\phi_{\un{n}}(a) \in \K(X_{\un{n}})$ for all $\un{n} \leq \un{1}$. 
Hence, in order to show that
\[
\sum \{ (-1)^{|\un{n}|}\psi_{\un{n}}(\phi_{\un{n}}(a)) \mid \un{n}\leq \un{1}_F \} = 0
\]
from the Sims--Yeend CNP relations, it suffices to show that
\begin{equation*}
\sum \{ (-1)^{|\un{n}|} \widetilde{i}_{\un{n}}^{\un{\ell}}(\phi_{\un{n}}(a)) \mid \un{n}\leq \un{1}_F \} = 0 \foral \un{\ell}\geq \un{1}.
\end{equation*}
To this end we fix $\un{m}\leq \un{\ell}$ and we show that this holds on each summand $X_{\un{m}}\I_{\un{\ell}-\un{m}}$ of $\widetilde{X}_{\un{\ell}}$. 
Notice that each $\widetilde{i}_{\un{n}}^{\un{\ell}}$ acts on $X_{\un{m}}\I_{\un{\ell} - \un{m}}$ via $i_{\un{n}}^{\un{m}}$, and that
\[
(-1)^{|\un{n}|} i_{\un{n}}^{\un{m}}(\phi_{\un{n}}(a)) = (-1)^{|\un{n}|}\phi_{\un{m}}(a).
\]
Hence we need only show that
\begin{equation} \label{eq:ann-sum}
\sum \{ (-1)^{|\un{n}|}\phi_{\un{m}}(a) \mid \un{n}\leq \un{1}_F \wedge \un{m} \} = 0.
\end{equation}
Let $F' := F \cap \supp{\un{m}}$. 
If $F' \neq \emptyset$, then $\un{1}_F \wedge \un{m} \neq \un{0}$ in which case
\[
\sum \{ (-1)^{|\un{n}|} \mid \un{n} \leq \un{1}_F \wedge \un{m} \} = 0
\]
and equation \eqref{eq:ann-sum} is satisfied.
On the other hand if $F' = \emptyset$ then equation \eqref{eq:ann-sum} has only one summand, and we are left with showing that $\phi_{\un{m}}(a) = 0$ on $X_{\un{m}}\I_{\un{\ell} - \un{m}}$, when $\un{m} \perp F$. 
In this case let $\xi_{\un{m}}, \eta_{\un{m}} \in X_{\un{m}}$ and $c, d\in \I_{\un{\ell}-\un{m}}$.
Since $\un{\ell} \geq \un{1}$ and $\un{m} \perp F$, we see that $\I_{\un{\ell} - \un{m}} \subseteq \bigcap_{i\in F}\ker \phi_{\bo{i}}$. 
Since $\sca{\xi_{\un{m}}, a \eta_{\un{m}}} \in \J_F$ and $\J_F \perp \bigcap_{i\in F}\ker \phi_{\bo{i}}$ we have for $c,d \in \I_{\un{\ell} - \un{m}}$ that
\[
\sca{\xi_{\un{m}} c, a \eta_{\un{m}} d} = c^* \sca{\xi_{\un{m}}, a \eta_{\un{m}}} d = 0.
\]
Therefore $\phi_{\un{m}}(a)$ vanishes on $X_{\un{m}}\I_{\un{\ell}-\un{m}}$ and the proof is complete.
\end{proof}

%%%%%%%%%%%%%%%%%%%%%%%%%%%%%%%%
\subsection{The Gauge Invariant Uniqueness Theorem}\label{Ss:GIUT}
%%%%%%%%%%%%%%%%%%%%%%%%%%%%%%%%

To make a distinction, we will be denoting the universal Sims-Yeend CNP algebra by $\N\O_X$. 
By Proposition \ref{P:SYCNP-CNP} and universality of the C*-algebras, we have a canonical surjective $*$-homo\-mor\-phism $\Phi \colon \N\O(X) \rightarrow \N\O_X$ that fixes generators of the same index.

%%%%%%%%%%%%%%%%%%%%%%%%%%%%%%%%
\begin{theorem}[$\bZ_+^N$-GIUT] \label{T:GIUT}
Let $(\pi,t)$ be a CNP-representation of a strongly compactly aligned product system $X$.
Then $\pi \times t$ defines a faithful representation on $\N\O(X)$ if and only if $\pi$ is injective and $(\pi,t)$ admits a gauge action.
\end{theorem}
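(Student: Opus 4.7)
The forward direction is immediate: if $\Phi := \pi\times t$ is faithful on $\N\O(X)$, then $\pi$ is injective because $A$ embeds faithfully into $\N\O(X)$ (to be established in Section \ref{S:CNP-rep}), and a gauge action on $\ca(\pi,t)$ is obtained by pushing the universal gauge action on $\N\O(X)$ forward along $\Phi$.

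For the backward direction, by universality there is a gauge action $\be$ on $\N\O(X)$ with $\Phi \circ \be_{\un{z}} = \ga_{\un{z}} \circ \Phi$. Averaging against the Haar measure of $\bT^N$ yields faithful conditional expectations onto the respective fixed-point algebras that are intertwined by $\Phi$. The usual positivity argument then reduces faithfulness of $\Phi$ to faithfulness of $\Phi$ restricted to $\N\O(X)^{\be}$, which is the closed union of the cores $\wh\B_{[\un{0},\un{m}]}$ attached to the universal Nica-covariant pair $(\wh\pi, \wh t)$. It therefore suffices to prove that $\Phi$ is isometric on each $\wh\B_{[\un{0},\un{m}]}$.

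I would carry this out by induction on $|\un{m}|$. The base $|\un{m}|=0$ is exactly the injectivity of $\pi$. For the inductive step, take $f \in \wh\B_{[\un{0},\un{m}]}$ with $\Phi(f)=0$ and decompose $f = \wh\pi(a) + g$ with $g \in \wh\B_{(\un{0},\un{m}]}$. Then $\pi(a) = -\Phi(g) \in \B_{(\un{0},\un{m}]}$ inside $\ca(\pi,t)$, and Proposition \ref{P:in IF} (available because $\pi$ is injective) forces $a \in \I_F$ with $F = \supp\un{m}$. The CNP relation in $\N\O(X)$ then rewrites $\wh\pi(a)$ as a linear combination of $\wh\psi_{\un{n}}(\phi_{\un{n}}(a))$ over $\un{0} \neq \un{n} \leq \un{1}_F$, each of which lies in $\wh\B_{(\un{0},\un{m}]}$ since $\un{1}_F \leq \un{m}$. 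Consequently the zero-order component of $f$ is absorbed and $f$ itself sits in $\wh\B_{(\un{0},\un{m}]}$. Next, for each $\Bi \in \supp\un{m}$, a Nica-covariant slicing $\wh t_{\Bi}(\xi)^* f \wh t_{\Bi}(\eta)$ lands, via Fowler's identity (\ref{eq:Fow}) and the stability relation (\ref{eq:stable}), inside the strictly smaller core $\wh\B_{[\un{0}, \un{m}-\Bi]}$; the inductive hypothesis kills those slices, and a standard argument in the style of \cite{Kak14} then recovers $f=0$.

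The principal obstacle is precisely this absorption-then-slicing step. Its success hinges on Proposition \ref{P:in IF}, which converts the analytic information ``$\pi(a) \in \B_{(\un{0},\un{m}]}$'' into the algebraic identity $a \in \I_F$ inside $\N\O(X)$. Without that converse, the $\wh\pi(A)$-component of $f$ could not be recycled through the CNP relation and the induction would collapse, since the slicing alone cannot reach the diagonal. Granted Proposition \ref{P:in IF}, the induction closes, $\Phi$ is isometric on every core, hence on $\N\O(X)^\be$, and hence on $\N\O(X)$ by the conditional-expectation reduction.
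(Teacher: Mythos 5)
Your proposal is correct and follows essentially the same route as the paper: reduce to the cores via the gauge-invariant conditional expectation, use Proposition \ref{P:in IF} to place the degree-$\un{0}$ coefficient in $\I_F$, absorb it through the CNP relation, and then eliminate the remaining summands by slicing with creation operators. The paper organizes the elimination by repeatedly slicing at minimal nonzero positions $\un{x}$ within a fixed core $\B_{[\un{0},m\cdot\un{1}_F]}$ (reapplying Proposition \ref{P:in IF} to each slice) rather than by induction on $|\un{m}|$ with single-generator slices, but this is only a cosmetic difference.
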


\begin{proof}
For the forward implication we have that $(\bZ^N,\bZ_+^N)$ satisfies \cite[Equation (3.5)]{SY11}, and thus each $\widetilde{\phi}_{\un{k}}$ is injective. 
Hence, by \cite[Theorem 4.4]{SY11} $A$ embeds faithfully in $\N\O_X$ and thus in $\N\O(X)$.
Universality of $\N\O(X)$ yields the existence of a gauge action.

For the converse let $\Phi_u$ be the universal representation of $\N\O(X)$ and set $\pi_u = \Phi_u|_A$ and $t_u = \Phi_u|_X$.
Since $A$ embeds in $\N\O(X)$ the pair $(\pi_u, t_u)$ is isometric.
Suppose that $(\pi, t)$ satisfies the hypothesis but $\pi \times t$ is not faithful.
To make a distinction we use $\B$ for the cores of $(\pi_u, t_u)$ and $B$ for the cores of $(\pi,t)$.
By using the gauge action, the intersection of $\ker(\pi \times t)$ with the fixed point algebra is not trivial.
As the fixed point algebra $B_{[\un{0},\infty]}$ is an inductive limit, there is an $m \in \bZ_+$ and an $\mt \neq F \subseteq [N]$ such that
\[
\ker (\pi \times t) \bigcap \B_{[\un{0}, m \cdot \un{1}_F]} \neq (0).
\]
Therefore there are $a \in A$ and $k_{\un{n}} \in \K X_{\un{n}}$ such that
\[
f := \pi_u(a) + \sum \{ \psi_{u, \un{n}}(k_{\un{n}}) \mid \un{0} \neq \un{n} \leq m \cdot \un{1}_F\} 
\]
is a non-zero element in $\ker (\pi \times t)$.
Then $\pi(a) \in B_{(\un{0}, m \cdot \un{1}_F]}$, and Proposition \ref{P:in IF} yields that $a \in \I_F$.
Thus
\[
\pi_u(a) = - \sum\{ (-1)^{|\un{n}|} \psi_{u, \un{n}}(\phi_{\un{n}}(a)) \mid {\un{0} \neq \un{n} \leq \un{1}_F} \}.
\]
Consequently we may rewrite
\[
\sum \{ \psi_{\un{n}}(k_{\un{n}}') \mid \un{0} \neq \un{n} \leq m \cdot \un{1}_F \} = 0
\]
for
\[
k_{\un{n}}' = 
\begin{cases}
k_{\un{n}} + (-1)^{|\un{n}| + 1} \phi_{\un{n}}(a) & \textup{ when } \un{0} \neq \un{n} \leq \un{1}_F, \\
k_{\un{n}} & \textup{ otherwise}.
\end{cases}
\]
Now suppose we write $f$ in an irreducible form, meaning that 
\[
\pi(a) + \sum \{ \psi_{\un{n}}(k_{\un{n}}) \mid \un{0} \neq \un{n} \leq m \cdot \un{1}_F \} = 0
\]
where some of the $a, k_{\un{n}}$ are zero and if a $k_{\un{n}} \neq 0$ then $\psi_{\un{n}}(k_{\un{n}})$ is not in $B_{(\un{n}, m \cdot \un{1}_F]}$. 
Choose a minimal $\un{x} \in (\un{0}, m \cdot \un{1}_F]$ so that $k_{\un{x}} \neq 0$ and $\psi_{\un{x}}(k_{\un{x}})$ is not in $B_{(\un{x}, m \cdot \un{1}_F]}$.
Then for every $\xi_{\un{x}}, \eta_{\un{x}} \in X_{\un{x}}$ we still have that 
\[
t_{u,\un{x}}(\xi_{\un{x}})^* f t_{u,\un{x}}(\eta_{\un{x}}) \in \ker(\pi \times t)
\]
and therefore
\[
\pi(a') + \sum \{ \psi_{\un{n}}(k_{\un{n}}'') \mid \un{0} \neq \un{n} \leq m \cdot \un{1}_F - \un{x} \} 
=
t_\un{x}(\xi_\un{x})^* (\pi \times t)(f) t_{\un{x}}(\eta_{\un{x}})
= 0
\]
where now $a' = \sca{\xi_{\un{x}}, k_{\un{x}} \eta_{\un{x}}}$ and  likewise for the appropriate $k_{\un{n}}''$.
Following the previous arguments we see that $a' \in \I_F$ and therefore
\[
t_{\un{x}}(\xi_{\un{x}})^* \psi_{\un{x}}(k_{\un{x}}) t_{\un{x}}(\eta_{\un{x}}) = \pi(a') \in \B_{(\un{0}, m \cdot \un{1}_F - \un{x}]}.
\]
Consequently we obtain that
\[
\psi_{\un{x}}(\K X_{\un{x}}) \psi_{\un{x}}(k_{\un{x}}) \psi_{\un{x}}(\K X_{\un{x}}) \in \B_{(\un{x}, m \cdot \un{1}_F]}.
\]
By choosing an approximate identity in $\psi_{\un{x}}(\K X_{\un{x}})$ we reach the contradiction that $k_{\un{x}}$ is in $B_{(\un{x}, m \cdot \un{1}_F]}$.
Hence $k_{\un{x}} = 0$.
Inductively we eliminate all entries and arrive at the point where $f = \psi_{u, m \cdot \un{1}_F}(k)$ for some $k \in \K X_{m \cdot \un{1}_F}$.
However as $\pi$ is injective and thus so is $\psi_{m \cdot \un{1}_F}$, we derive that $k = 0$, and the proof is complete.
\end{proof}

As both $\N\O(X)$ and $\N\O_X$ satisfy the GIUT, we get that they coincide and that $\N \O_X$ has a simpler universal property.

%%%%%%%%%%%%%%%%%%%%%%%%%%%%%%%%
\begin{corollary}\label{C:CNP is CNP}
Let $X$ be a strong compactly aligned product system over $A$.
Then the natural map $\Phi \colon \N\O(X) \rightarrow \N\O_X$ is a $*$-isomorphism. 
Thus, a pair $(\pi,t)$ is a CNP-representation if and only if it is a Sims-Yeend CNP-represen\-tation.
\end{corollary}

%%%%%%%%%%%%%%%%%%%%%%%%%%%%%%%%
\subsection{The CNP-ideal} \label{subsec:cnp-ideal}
%%%%%%%%%%%%%%%%%%%%%%%%%%%%%%%%

Next we describe the ideal of the CNP-relations.
To this end let $(\pi,t)$ be a Nica-covariant representation.
For $\mt \neq F \subseteq [N]$ set
\begin{equation}
q_F := \prod_{i \in F}(I - p_i)
\end{equation}
where the product \emph{initially} is taken in the usual order of $[N]$. 
When $a \in \I_F$ then $\phi_{\un{n}}(a) \in \K X_{\un{n}}$ for all $\un{n} \leq \un{1}$ and thus
\[
\pi(a)q_F = \pi(a) + \sum \{ (-1)^{|\un{n}|} \psi_{\un{n}}(\phi_{\un{n}}(a)) \mid \un{0} \neq \un{n} \leq \un{1}_F \}.
\]
Hence the product in $\pi(a) q_F$ is independent of the order in $F$.
At the same time $p_{\Bi} \pi(a) = \pi(a) - \psi_{\Bi}(\phi_{\Bi}(a)) = \pi(a) p_{\Bi}$ for $i\in F$. Hence, we also get that $q_F \in \pi(A)'$.

%%%%%%%%%%%%%%%%%%%%%%%%%%%%%%%%
\begin{proposition}\label{P:pf reducing}
Let $(\pi,t)$ be a Nica-covariant representation of a strong compactly aligned product system $X$. Then
\[
 q_Ft_{\un{m}}(\xi_{\un{m}})
=
\begin{cases}
t_{\un{m}}(\xi_{\un{m}})q_F & \text{ if } \un{m} \perp F, \\
0 & \text{ if } \un{m} \not\perp F,
\end{cases}
\]
for all $\xi_{\un{m}} \in X_{\un{m}}$.
Consequently, for all $\un{m} \in \bZ_+^N$ we have that
\[
\pi(\I_F) q_F t_{\un{m}}(X_{\un{m}}) \subseteq
\pi(\I_F) t_{\un{m}}(X_{\un{m}}) q_F 
\]
where the product defining $q_F$ can be taken in any order in these relations.
\end{proposition}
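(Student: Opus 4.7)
Plan: I split the first part of the statement into the two cases $\un{m}\perp F$ and $\un{m}\not\perp F$; the concluding inclusion then follows directly from these. By Nica covariance, $\psi_\bo{i}(k_{\bo{i},\la})$ and $\psi_\bo{j}(k_{\bo{j},\mu})$ commute for $i\neq j$, hence the projections $p_\bo{i}$ pairwise commute and the product defining $q_F$ is order-independent.

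For $\un{m}\perp F$, it suffices to show that each $p_\bo{i}$ with $i\in F$ commutes with $t_\un{m}(\xi_\un{m})$, from which $q_F t_\un{m}(\xi_\un{m}) = t_\un{m}(\xi_\un{m})q_F$ follows. The key input is that for $\bo{i}\perp\un{m}$ the product system structure forces $t_\bo{i}(\zeta)t_\un{m}(\xi_\un{m}) = t_\un{m}(\xi_\un{m})t_\bo{i}(\zeta)$ (both sides equal $t_{\bo{i}+\un{m}}$ of the appropriate product), and by taking adjoints one also obtains $t_\bo{i}(\eta)^*t_\un{m}(\xi_\un{m}) = t_\un{m}(\xi_\un{m})t_\bo{i}(\eta)^*$. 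Hence for a rank-one compact $\theta_{\zeta,\eta}\in\K X_\bo{i}$,
\[
\psi_\bo{i}(\theta_{\zeta,\eta})t_\un{m}(\xi_\un{m}) = t_\bo{i}(\zeta)t_\bo{i}(\eta)^*t_\un{m}(\xi_\un{m}) = t_\un{m}(\xi_\un{m})t_\bo{i}(\zeta)t_\bo{i}(\eta)^* = t_\un{m}(\xi_\un{m})\psi_\bo{i}(\theta_{\zeta,\eta}),
\]
and linearity together with norm-continuity of $\psi_\bo{i}$ extend this to $\psi_\bo{i}(k_{\bo{i},\la})t_\un{m}(\xi_\un{m}) = t_\un{m}(\xi_\un{m})\psi_\bo{i}(k_{\bo{i},\la})$. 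Taking the w$^*$-limit in $\la$ yields $p_\bo{i} t_\un{m}(\xi_\un{m}) = t_\un{m}(\xi_\un{m}) p_\bo{i}$.

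For $\un{m}\not\perp F$, pick $i_0\in\supp\un{m}\cap F$ and, using the pairwise commutativity of the factors, rewrite $q_F = \bigl(\prod_{j\in F\setminus\{i_0\}}(I-p_\bo{j})\bigr)(I-p_{\bo{i}_0})$, so it suffices to prove $(I-p_{\bo{i}_0})t_\un{m}(\xi_\un{m})=0$. Since $m_{i_0}\ge 1$, by density of elementary tensors in $X_\un{m}\simeq X_{\bo{i}_0}\otimes X_{\un{m}-\bo{i}_0}$ I reduce to $\xi_\un{m}=\zeta\tau$ with $\zeta\in X_{\bo{i}_0}$, $\tau\in X_{\un{m}-\bo{i}_0}$. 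Then
\[
\psi_{\bo{i}_0}(k_{\bo{i}_0,\la})t_{\bo{i}_0}(\zeta) = t_{\bo{i}_0}(k_{\bo{i}_0,\la}\zeta) \longrightarrow t_{\bo{i}_0}(\zeta)
\]
in norm by non-degeneracy of the $\K X_{\bo{i}_0}$-action on $X_{\bo{i}_0}$; hence the w$^*$-limit $p_{\bo{i}_0}t_{\bo{i}_0}(\zeta)$ coincides with the norm limit $t_{\bo{i}_0}(\zeta)$. Multiplying by $t_{\un{m}-\bo{i}_0}(\tau)$ on the right gives $p_{\bo{i}_0}t_\un{m}(\zeta\tau) = t_\un{m}(\zeta\tau)$, and so $(I-p_{\bo{i}_0})t_\un{m}(\xi_\un{m})=0$.

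For the ``consequently'' inclusion, when $\un{m}\perp F$ the commutation from the first case gives $\pi(\I_F)q_F t_\un{m}(X_\un{m}) = \pi(\I_F)t_\un{m}(X_\un{m})q_F$, and when $\un{m}\not\perp F$ the second case makes $q_F t_\un{m}(X_\un{m})=0$ so the inclusion holds trivially; in either case the order freedom for $q_F$ is inherited from commutativity of the factors. The principal technical point is the orthogonal-index commutation $p_\bo{i} t_\un{m}(\xi_\un{m}) = t_\un{m}(\xi_\un{m})p_\bo{i}$ — once this is established, the rest is elementary bookkeeping.
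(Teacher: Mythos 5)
Your case split is the same as the paper's, and the computation $p_{\bo{i}_0}t_{\un{m}}(\xi_{\un{m}})=t_{\un{m}}(\xi_{\un{m}})$ for $i_0\in\supp\un{m}$ is fine, but the two commutation claims on which everything else rests are false for general product systems over $\bZ_+^N$. First, $t_{\Bi}(\zeta)t_{\un{m}}(\xi_{\un{m}})=t_{\Bi+\un{m}}(u_{\Bi,\un{m}}(\zeta\otimes\xi_{\un{m}}))$ while $t_{\un{m}}(\xi_{\un{m}})t_{\Bi}(\zeta)=t_{\Bi+\un{m}}(u_{\un{m},\Bi}(\xi_{\un{m}}\otimes\zeta))$, and the composite identification $X_{\Bi}\otimes X_{\un{m}}\cong X_{\Bi+\un{m}}\cong X_{\un{m}}\otimes X_{\Bi}$ is a nontrivial piece of structure, not the flip: already for a $2$-graph with factorization rule $ef=f'e'$ one has $t(\delta_e)t(\delta_f)=t(\delta_{f'})t(\delta_{e'})$, which differs from $t(\delta_f)t(\delta_e)$ whenever the factorization bijection is not trivial. (Your adjoint step has a further problem: taking adjoints of $ST=TS$ yields $T^*S^*=S^*T^*$, not $S^*T=TS^*$.) Second, Nica covariance does not say that $\psi_{\Bi}(S)$ and $\psi_{\bo{j}}(T)$ commute; it says $\psi_{\Bi}(S)\psi_{\bo{j}}(T)=\psi_{\Bi+\bo{j}}(S\vee T)$ and $\psi_{\bo{j}}(T)\psi_{\Bi}(S)=\psi_{\Bi+\bo{j}}(T\vee S)$, and $S\vee T=i_{\Bi}^{\Bi+\bo{j}}(S)\,i_{\bo{j}}^{\Bi+\bo{j}}(T)$ is order-dependent, so $\psi_{\Bi}(k_{\Bi,\la})$ and $\psi_{\bo{j}}(k_{\bo{j},\mu})$ need not commute. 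Hence your derivation that the $p_{\Bi}$ pairwise commute, and with it the claimed order-independence of $q_F$, is unjustified.

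What is true, and what the paper actually uses, is the weaker ``normal form'' containment of equation (\ref{eq:Fow}): for $\Bi\perp\un{m}$ one has $t_{\un{m}}(X_{\un{m}})^*t_{\Bi}(X_{\Bi})\subseteq\ol{t_{\Bi}(X_{\Bi})t_{\un{m}}(X_{\un{m}})^*}$, i.e.\ the relevant \emph{closed spans} can be rearranged even though individual elements cannot. From this one deduces that both $t_{\un{m}}(\xi_{\un{m}})$ and $t_{\un{m}}(\xi_{\un{m}})^*$ leave the subspace $p_{\Bi}H=\ol{t_{\Bi}(X_{\Bi})H}$ invariant, so the \emph{projection} $p_{\Bi}$ commutes with $t_{\un{m}}(\xi_{\un{m}})$ even though the compacts and creation operators themselves do not commute; a similar range argument (or the paper's explicit analysis via equation (\ref{eq:more perp}) of where $i_0$ sits inside the ordered product) is needed to kill $q_Ft_{\un{m}}(\xi_{\un{m}})$ when $\un{m}\not\perp F$ and to obtain the order-independence asserted at the end of the statement. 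Your proof can be repaired by substituting these subspace-invariance arguments for the elementwise commutations, but as written the key steps fail for any higher rank graph whose colour-exchange bijection is nontrivial.
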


\begin{proof}
Suppose that $F = \{1, \dots, r\}$ and fix $\xi_{\un{m}} \in X_{\un{m}}$.
First consider the case where $\un{m} \perp F$ and let $i \in F$.
Equation (\ref{eq:Fow}) and $X_{\un{m}} \otimes X_\Bi \simeq X_\Bi \otimes X_{\un{m}}$ imply
\begin{align*}
t_{\un{m}}(\xi_{\un{m}})^* \psi_\Bi(\K X_\Bi)
& \subseteq
\ol{t_\Bi(X_\Bi) t_{\un{m}}(X_{\un{m}})^* t_\Bi(X_\Bi)^*}
=
\ol{t_\Bi(X_\Bi) t_\Bi(X_\Bi)^* t_{\un{m}}(X_{\un{m}})^*}
\end{align*}
so that $t_{\un{m}}(\xi_{\un{m}})^* p_\Bi H \subseteq p_\Bi H$.
Moreover we have that
\begin{align*}
t_{\un{m}}(\xi_{\un{m}}) \psi_\Bi(\K X_\Bi)
& \subseteq
\ol{t_\Bi(X_\Bi) t_{\un{m}}(X_{\un{m}}) t_\Bi(X_\Bi)^*}
=
\ol{\psi_\Bi(\K X_\Bi) t_\Bi(X_\Bi) t_{\un{m}}(X_{\un{m}}) t_\Bi(X_\Bi)^*}
\end{align*}
so that $t_{\un{m}}(\xi_{\un{m}}) p_\Bi H \subseteq p_\Bi H$. Thus $p_\Bi$ is reducing for $t_{\un{m}}(X_{\un{m}})$.

Now suppose that $\supp \un{m} \bigcap F \neq \mt$ and let $i \in \supp \un{m} \bigcap F$.
Then $p_{\bo{i}} t_{\un{m}}(\xi_{\un{m}}) = t_{\un{m}}(\xi_{\un{m}})$ as $p_{\bo{i}}$ is a left unit for $t_{\un{m}}(X_{\un{m}}) = t_\Bi(X_{\bo{i}}) t_{\un{m} - \Bi}(X_{\un{m} - \bo{i}})$. 
Therefore
\[
(I - p_{\bo{i}}) t_{\un{m}}(\xi_{\un{m}}) = 0.
\]
Likewise we have that
\begin{equation}\label{eq:more perp}
(I - p_{\bo{i}}) \psi_{\un{n}}(\K X_{\un{n}}) t_{\un{m}}(\xi_{\un{m}}) = 0 \foral \un{n} \in \bZ_+^N.
\end{equation}
Indeed if $i \in \supp \un{n}$ then
\[
\psi_{\un{n}} (\K X_{\un{n}}) t_{\un{m}}(X_{\un{m}})
=
t_{\Bi}(X_{\Bi}) \psi_{\un{n} - \Bi} (\K X_{\un{n} - \Bi}) t_{\Bi}(X_{\Bi})^* t_{\un{m}}(X_{\un{m}}),
\]
and we proceed as before, whereas if $i \notin \supp \un{n}$ then equation (\ref{eq:Fow}) gives
\[
\psi_{\un{n}} (\K X_{\un{n}}) t_{\un{m}}(X_{\un{m}})
\subseteq
\ol{t_{\Bi}(X_{\Bi}) \psi_{\un{n}} (\K X_{\un{n}}) t_{\un{m} - \Bi}(X_{\un{m} - \Bi})}.
\]
In both cases we have $(I-p_{\bo{i}})\psi_{\un{n}}(\K X_{\un{n}}) t_{\un{m}}(\xi_{\un{m}}) = 0$.
Now we can show that $q_Ft_{\un{m}}(\xi_{\un{m}}) = 0$. 
By the above comments, it suffices to show that $q_F t_{\bo{i}}(\xi_{\bo{i}}) = 0$ for $i\in F$. 
There are two cases. 
If $i = r$ then we get
\[
q_F t_{\bo{i}}(\xi_{\bo{i}}) = (\prod_{r \neq i \in F} (I - p_{\Bi}) ) \cdot (I - p_{\bo{r}})t_{\bo{r}}(\xi_{\bo{r}}) = 0.
\]
If $i < r$ then let $H$ be the Hilbert space that $(\pi,t)$ acts on. 
For $j = i +1 \leq r$ we obtain
\begin{align*}
(I- p_{\bo{j}}) \cdots (I - p_{\bo{r}})  t_{\bo{i}}(\xi_{\bo{i}}) H
& \subseteq 
t_{\bo{i}}(X_{\bo{i}}) H \vee (\bigvee \{\psi_{\un{n}}(\K X_{\un{n}}) t_{\bo{i}}(X_{\bo{i}}) H \mid \supp \un{n} \subseteq [j, r] \}).
\end{align*}
Therefore by equation (\ref{eq:more perp}) we still get
\[
(I - p_{\bo{i}})(I- p_{\bo{j}}) \cdots (I - p_{\bo{r}})  t_{\bo{i}}(\xi_{\bo{i}}) H = \{0\}
\]
which implies that $q_F t_{\bo{i}}(\xi_{\bo{i}}) = 0$.
The arguments above can be applied when considering any order in the product defining $q_F$ in the relations, and the proof is complete.
\end{proof}

%%%%%%%%%%%%%%%%%%%%%%%%%%%%%%%%%
%\begin{remark}
%Notice that a priori taking a different order in defining $q_F$ could give different elements.
%However Proposition \ref{P:pf reducing} asserts that when multiplied with $t_{\un{m}}(\xi_{\un{m}})$ the outcome is the same.
%\end{remark}

%%%%%%%%%%%%%%%%%%%%%%%%%%%%%%%%
\begin{definition}
Let $(\pi,t)$ be a Nica-covariant representation of a strong compactly aligned product system $X$.
For $\mt \neq F \subseteq [N]$ we define the subspace
\[
\fI_F := \ol{\spn}\{ t_{\un{n}}(\xi_{\un{n}}) \pi(a) q_F t_{\un{m}}(\eta_{\un{m}})^* \mid a \in \I_F, \un{n}, \un{m} \in \bZ_+^N \}.
\] 
The \emph{ideal of the CNP-relations} (CNP-ideal) is defined as the algebraic sum
\[
\K(\pi,t): = \sum \{ \fI_F \mid \mt \neq F \subseteq [N] \}.
\]
\end{definition}

The next proposition justifies this terminology.

%%%%%%%%%%%%%%%%%%%%%%%%%%%%%%%%
\begin{proposition}\label{P:K ideal}
Let $X$ be a strong compactly aligned product system over $A$.
Suppose that $(\pi,t)$ is a Nica-covariant representation of $X$.
If $\mt \neq F \subseteq [N]$ then the subspace $\fI_F$ is the ideal generated by $\{\pi(a) q_F \mid a \in \I_F\}$ in $\ca(\pi,t)$. 
In particular, $\K(\pi,t)$ is a norm-closed ideal in $\ca(\pi,t)$.
\end{proposition}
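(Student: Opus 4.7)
The plan is to verify first that $\fI_F$ is a two-sided closed ideal in $\ca(\pi,t)$, and then to identify it with the ideal generated by $\pi(\I_F) q_F$. The final assertion of the proposition follows at once, since a finite algebraic sum of closed two-sided ideals in a C*-algebra is automatically norm-closed.

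Using $q_F^* = q_F$ together with the relation $q_F \pi(a) = \pi(a) q_F$ for $a \in \I_F$ (both recorded in the excerpt preceding the definition of $\fI_F$), self-adjointness of $\fI_F$ is immediate: the adjoint of a spanning element equals $t_{\un{m}}(\eta_{\un{m}}) \pi(a^*) q_F t_{\un{n}}(\xi_{\un{n}})^*$ with $a^* \in \I_F$. Closure under left multiplication by $\pi(c)$ and by $t_{\un{k}}(\zeta_{\un{k}})$ is straightforward --- these factors absorb into $t_{\un{n}}(\xi_{\un{n}})$; similarly right multiplication by $\pi(c)$ absorbs into $t_{\un{m}}(\eta_{\un{m}})^*$. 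The main step is right multiplication by $t_{\un{k}}(\zeta_{\un{k}})$. Fowler's relation (\ref{eq:Fow}) writes $t_{\un{m}}(\eta_{\un{m}})^* t_{\un{k}}(\zeta_{\un{k}})$ as a norm-limit of finite sums $\sum_j t_{\un{k}'}(\alpha_j) t_{\un{m}'}(\beta_j)^*$ with $\un{k}' = \un{k}\vee\un{m} - \un{k}$ and $\un{m}' = \un{k}\vee\un{m} - \un{m}$. When $\un{k}' \not\perp F$, Proposition \ref{P:pf reducing} forces $q_F t_{\un{k}'}(\alpha_j) = 0$ and the term vanishes; when $\un{k}' \perp F$, the same proposition commutes $q_F$ past $t_{\un{k}'}(\alpha_j)$, reshaping each term to
\[
t_{\un{n}+\un{k}'}(\xi_{\un{n}} \phi_{\un{k}'}(a)\alpha_j) \, q_F \, t_{\un{m}'}(\beta_j)^*.
\]
To recover a spanning element of $\fI_F$, I appeal to the C*-correspondence factorization (\ref{eq:factorization}) and write $\phi_{\un{k}'}(a)\alpha_j = \alpha_j' c_j$ with $c_j = \sca{\alpha_j, \phi_{\un{k}'}(a^*a)\alpha_j}^{1/2}$; since $a^*a \in \I_F$ and $\un{k}' \perp F$, Proposition \ref{P:rem inv IF} lands $\sca{\alpha_j, \phi_{\un{k}'}(a^*a)\alpha_j}$ inside $\I_F$, hence $c_j \in \I_F$. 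The term therefore equals $t_{\un{n}+\un{k}'}(\xi_{\un{n}}\alpha_j') \pi(c_j) q_F t_{\un{m}'}(\beta_j)^* \in \fI_F$, and norm-closedness of $\fI_F$ handles the limit. Closure under left multiplication by $t_{\un{k}}(\zeta_{\un{k}})^*$ then comes for free from self-adjointness, so $\fI_F$ is a two-sided ideal.

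For the identification, note that for each $b \in \I_F$ and approximate identity $(e_\la)$ of $\I_F$, the elements $\pi(e_\la) \pi(b) q_F \pi(e_\la)^* = \pi(e_\la b e_\la) q_F$ are spanning elements of $\fI_F$ (taking $\un{n} = \un{m} = \un{0}$, $\xi_{\un{0}} = \eta_{\un{0}} = e_\la$, $a = b$) converging in norm to $\pi(b) q_F$; hence $\pi(\I_F) q_F \subseteq \fI_F$, and $\fI_F$ contains the ideal generated by $\pi(\I_F) q_F$. The reverse inclusion is manifest from the form of the spanning set. The crux throughout is the factorization step: without Proposition \ref{P:rem inv IF} --- equivalently, without the $F^\perp$-invariance that distinguishes $\I_F$ from $\J_F$ --- the scalar $c_j$ cannot be pushed back inside $\I_F$, and the argument collapses.
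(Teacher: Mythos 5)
Your argument is correct, and for the main assertion it follows essentially the same route as the paper: Fowler's relation \eqref{eq:Fow} to rewrite $t_{\un{m}}(\eta_{\un{m}})^* t_{\un{k}}(\zeta_{\un{k}})$, the dichotomy of Proposition \ref{P:pf reducing} to kill or commute $q_F$, and the factorization \eqref{eq:factorization} combined with Proposition \ref{P:rem inv IF} to return the coefficient to $\I_F$ — you simply perform the factorization inline rather than isolating $\pi(\I_F)\,t_{\un{m}}(X_{\un{m}})\subseteq\ol{t_{\un{m}}(X_{\un{m}})\pi(\I_F)}$ as a preliminary step, and you make explicit the approximate-identity argument identifying $\fI_F$ with the ideal generated by $\pi(\I_F)q_F$. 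The one genuine divergence is the closedness of $\K(\pi,t)$: the paper proves $\fI_F\cdot\fI_{F'}\subseteq\fI_F$ for $F\supseteq F'$ and runs an induction over the lattice of subsets, whereas you invoke the standard fact that a finite algebraic sum of closed two-sided ideals of a C*-algebra is closed. Since the first part already establishes each $\fI_F$ as a closed two-sided ideal of all of $\ca(\pi,t)$, your shortcut is legitimate and cleaner for this purpose; what it does not record is the multiplicative relation between the $\fI_F$ for nested $F$, which the paper's longer route yields as a by-product.
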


\begin{proof}
Fix an $\mt \neq F \subseteq [N]$. 
First we show that
\[
\pi(a) t_{\un{m}}(X_{\un{m}}) \in \ol{t_{\un{m}}(X_{\un{m}}) \pi(\I_F)} \foral a \in \I_F, \un{m} \perp F.
\]
Indeed if $a \in \I_F$ then $a^* a \in \I_F$ and thus
\[
t_{\un{m}}(X_{\un{m}})^* \pi(a)^*\pi(a) t_{\un{m}}(X_{\un{m}}) 
\subseteq
\pi(\sca{X_{\un{m}}, a^*a X_{\un{m}}})
\subseteq
\pi(\I_F).
\]
By applying equation (\ref{eq:factorization}) for the Hilbert module $X_{\un{m}}$, we get that for any $\xi_{\un{m}} \in X_{\un{m}}$ there exists an $\eta_{\un{m}} \in X_{\un{m}}$ such that
\[
\pi(a) t_{\un{m}}(\xi_{\un{m}}) = t(\eta_{\un{m}}) \pi(b) \qfor b = |\sca{\xi_{\un{m}}, a^*a \xi_{\un{m}}}|^{1/2}. 
\]
As $b^2 \geq 0$ is in $\I_F$ then so is $b$.

Next we show that $\fI_F$ is an ideal of $\ca(\pi,t)$.
It is clear that $\fI_F$ is selfadjoint and that $\fI_F \pi(A) \subseteq \fI_F$.
Recall that $q_F$ is in the commutant of $\pi(\I_F)$.
Hence Proposition \ref{P:pf reducing} gives
\begin{align*}
\pi(a) q_F t_{\un{m}}(\xi_{\un{m}})^* \pi(b) q_{F}
& =
\pi(a) q_F t_{\un{m}}(\xi_{\un{m}})^* q_F \pi(b) \\
& =
\begin{cases}
\pi(a)q_{F} t_{\un{m}}(b^* \xi_{\un{m}})^* & \text{ if } \un{m} \perp F, \\
0 & \text{ if } \un{m} \not\perp F,
\end{cases}
\end{align*}
for $a,b \in \fI_F$. 
It is also clear that multiplying an element in $\fI_F$ on the right by $t_{\un{n}}(\xi_{\un{n}})^*$ gives an element in $\fI_F$.
For arbitrary $\un{n}$ and $\un{m}$ we have that
\[
\pi(a) q_F t_{\un{m}}(\xi_{\un{m}})^* t_{\un{n}}(\xi_{\un{n}}) \in \pi(a)q_F \ol{t_{\un{n}'}(X_{\un{n}'}) t_{\un{m}'}(X_{\un{m}'})^*}
\]
for
\[
\un{n}' = - \un{n} + \un{n} \vee \un{m} \qand \un{m}' = -\un{m} + \un{n} \vee \un{m}.
\]
However in view of Proposition \ref{P:pf reducing} if $\un{n}' \not\perp F$ then
\[
\pi(a) q_F t_{\un{n}'}(X_{\un{n}'})= \{0\},
\]
whereas if $\un{n}' \perp F$ then by the comments above we get that
\[
\pi(a) q_F t_{\un{n}'}(X_{\un{n}'}) \subseteq \pi(\I_F) t_{\un{n}'}(X_{\un{n}'}) q_F \subseteq \ol{t_{\un{n}'}(X_{\un{n}'}) \pi(\I_F) q_F}.
\]
In every case we have that 
\[
\pi(a) q_F t_{\un{m}}(\xi_{\un{m}})^* t_{\un{n}}(\xi_{\un{n}}) \in \ol{t_{\un{n}'}(X_{\un{n}'}) \pi(\I_F) q_F t_{\un{m}'}(X_{\un{m}'})^*}
\]
which completes the first part of the statement, since $\fI_F$ is self-adjoint.

For the second part, we remark that if $F \supseteq F'$ then
\begin{align*}
\pi(a) q_F t_{\un{m}}(\xi_{\un{m}})^* \pi(b) q_{F'}
& =
\begin{cases}
\pi(a)q_{F \cup F'} t_{\un{m}}(b^* \xi_{\un{m}})^* & \text{ if } \un{m} \perp F', \\
0 & \text{ if } \un{m} \not\perp F',
\end{cases}
\\
& =
\begin{cases}
\pi(a)q_{F} t_{\un{m}}(b^* \xi_{\un{m}})^* & \text{ if } \un{m} \perp F', \\
0 & \text{ if } \un{m} \not\perp F',
\end{cases}
\end{align*}
for $a \in \I_F$ and $b \in \I_{F'}$.
Since $\I_F \supseteq \I_{F'}$ we obtain
\[
\pi(a) q_F t_{\un{m}}(\xi_{\un{m}})^* \cdot t_{\un{n}}(\xi_{\un{n}}) \pi(b) q_{F'} \in \fI_F
\]
for every $\un{n}$.
Hence we derive that $\fI_F \cdot \fI_{F'} \subseteq \fI_F$.
Thus $\fI_{[N]}$ is a norm-closed ideal of $\ol{\fI_F + \fI_{[N]}}$ whenever $|F| = N-1$.
Therefore
\[
\ol{\fI_F + \fI_{[N]}} = {\fI_F + \fI_{[N]}}.
\]
Likewise if $|F_1| = \dots = |F_k| = N-1$ for some $k$ then 
\[
\sum_{n=2}^k \fI_{F_n} + \fI_{[N]} 
\; \textup{ is an ideal of } \;
\{\sum_{n=1}^k \fI_{F_n} + \fI_{[N]}\}^{\ol{\phantom{oo}}}
\]
and therefore $\sum_{n=1}^k \fI_{F_n} + \fI_{[N]}$ is closed.
Induction then completes the proof.
\end{proof}

%Suppose $(\rho,s)$ is a universal CNP representation, so that $C^*(\rho,s) \cong \N \O(X)$. 
We are now in position of recovering the co-universal property of $\N\O(X)$ established in \cite[Theorem 4.1]{CLSV11}.
Our approach is different and emphasizes the explicit form of the CNP-relations.

%%%%%%%%%%%%%%%%%%%%%%%%%%%%%%%%
\begin{corollary}\label{C:co-un} \cite[Theorem 4.1]{CLSV11}
Let $X$ be a strong compactly aligned product system over $A$.
If $(\pi,t)$ is a Nica-covariant representation that admits a gauge action and $\pi$ is injective then there is a canonical $*$-epimorphism $\ca(\pi,t) \to \N\O(X)$ that fixes the generators of the same index.
Consequently $\ca(\pi, t)/ \K(\pi,t)$ is $*$-isomorphic to $\N\O(X)$.
\end{corollary}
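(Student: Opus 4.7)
The strategy is to prove the stronger statement that $\ca(\pi,t)/\K(\pi,t)$ is canonically $*$-isomorphic to $\N\O(X)$; the $*$-epimorphism then arises as the composition of the quotient map with this isomorphism. Set $J := \K(\pi,t)$, $B := \ca(\pi,t)/J$, let $Q$ denote the quotient map, and write $(\pi',t') := (Q\pi, Qt)$. For every $\mt \neq F \subseteq [N]$ and every $a \in \I_F$, one has $\phi_{\un{n}}(a) \in \K X_{\un{n}}$ for all $\un{n} \leq \un{1}$, so that
\[
\pi(a)\,q_F \;=\; \pi(a) + \sum_{\un{0} \neq \un{n} \leq \un{1}_F}(-1)^{|\un{n}|}\,\psi_{\un{n}}(\phi_{\un{n}}(a)) \;\in\; \fI_F \;\subseteq\; J.
\]
Applying $Q$ yields the CNP-relations for $(\pi',t')$, and universality of $\N\O(X)$ produces a canonical surjection $\rho: \N\O(X) \to B$ fixing the generators. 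It remains to show that $\rho$ is a $*$-isomorphism, which I plan to do by invoking Theorem \ref{T:GIUT}.

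This in turn requires verifying that $(\pi',t')$ admits a gauge action and that $\pi'$ is injective on $A$. The gauge part is straightforward: each generator $\pi(a)q_F$ is gauge-invariant by the above identity, while a general spanning element of $\fI_F$ transforms as $\beta_{\un{z}}(t_{\un{n}}(\xi)\pi(a)q_Ft_{\un{m}}(\eta)^*) = \un{z}^{\un{n}-\un{m}}\,t_{\un{n}}(\xi)\pi(a)q_Ft_{\un{m}}(\eta)^*$, which again lies in $\fI_F$. Hence each $\fI_F$, and so $J$, is gauge-invariant, and $\beta$ descends to a gauge action on $B$.

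The main obstacle is the injectivity of $\pi'$ on $A$, i.e., $\pi(A) \cap J = (0)$. Assume $\pi(a) \in J$. By gauge-invariance of $J$ and faithfulness of the conditional expectation $E$ onto $\B_{[\un{0},\infty]}$ coming from the gauge action, one has $\pi(a) = E(\pi(a)) \in J \cap \B_{[\un{0},\infty]}$. Decomposing each $\fI_F$ into gauge-weight components and using Proposition \ref{P:pf reducing} (which forces $q_F t_{\un{n}}(X_{\un{n}})^* = 0$ unless $\un{n} \perp F$), the zero-weight part of $\fI_F$ is the closed span of elements $\psi_{\un{n}}(k)q_F$ with $\un{n} \perp F$ and $k \in \K(X_{\un{n}}\I_F)$. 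Expanding each such factor via
\[
\psi_{\un{n}}(k)\,q_F \;=\; \sum_{G\subseteq F}(-1)^{|G|}\,\psi_{\un{n}+\un{1}_G}\bigl(i^{\un{n}+\un{1}_G}_{\un{n}}(k)\bigr)
\]
recasts $\pi(a)$ as a norm-limit of polynomial expressions in the cores. A kernel-peeling argument in the same spirit as the proof of Theorem \ref{T:GIUT}---invoking Proposition \ref{P:in IF} at each step to drop the outer index $a$ into ideals $\I_F$ and then using the CNP-identity to re-absorb the resulting terms into $J$---combined with the injectivity of $\pi$, forces $a = 0$. With this in hand, Theorem \ref{T:GIUT} yields that $\rho$ is a $*$-isomorphism, and $Q$ composed with $\rho^{-1}$ is the desired $*$-epimorphism $\ca(\pi,t)\to\N\O(X)$ with kernel exactly $\K(\pi,t)$.
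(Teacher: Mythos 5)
Your overall architecture --- pass to the quotient $B=\ca(\pi,t)/\K(\pi,t)$, check that the induced pair is CNP and carries a gauge action, then invoke Theorem \ref{T:GIUT} --- is a legitimate ``dual'' of what is needed, and your first two steps are correct: $\pi(a)q_F\in\fI_F$ for $a\in\I_F$, and each $\fI_F$ is gauge-invariant. But the entire weight of the corollary rests on the step you yourself call the main obstacle, namely $\pi(A)\cap\K(\pi,t)=(0)$, and your treatment of it has a genuine gap. First, the parenthetical claim that Proposition \ref{P:pf reducing} forces $q_F t_{\un{n}}(X_{\un{n}})^*=0$ unless $\un{n}\perp F$ is false: the proposition gives $q_F t_{\un{n}}(\xi)=0$, equivalently $t_{\un{n}}(\xi)^*q_F=0$, for $\un{n}\not\perp F$, whereas $q_F t_{\un{n}}(\xi)^*=(t_{\un{n}}(\xi)q_F)^*$ is generally nonzero (already in the Fock representation with $N=1$, $s_{\bo{1}}(\xi)q_{\{1\}}$ maps the vacuum summand $A$ into $X_{\bo{1}}$). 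Consequently the fixed-point part of $\fI_F$ also contains the spanning elements $t_{\un{n}}(\xi)\pi(a)q_F t_{\un{n}}(\eta)^*$ with $\un{n}\not\perp F$, which are not of the form $\psi_{\un{n}}(k)q_F$, so the description of $J\cap\B_{[\un{0},\infty]}$ on which your expansion rests is incomplete.

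Second, and more seriously, the ``kernel-peeling'' cannot be transplanted from Theorem \ref{T:GIUT} as asserted. There the peeling operates on an exact identity $\pi(a)+\sum\psi_{\un{n}}(k_{\un{n}})=0$ inside a fixed core $\B_{[\un{0},m\cdot\un{1}_F]}$, which is exactly what Proposition \ref{P:in IF} requires; here $\pi(a)\in J^{\be}$ only says that $\pi(a)$ is a norm limit of elements of $J\cap\B_{[\un{0},m\cdot\un{1}]}$ with $m$ unbounded, and in a general representation there is no compression recovering the $\un{0}$-graded coefficient with a norm estimate. Proving $\pi(A)\cap\K(\pi,t)=(0)$ for an arbitrary gauge-admitting $(\pi,t)$ is essentially the co-universality statement itself, and this is precisely where the paper's proof brings in the Fock representation: it shows $\ker(\pi\times t)\subseteq\K(\si,s)$ inside $\N\T(X)\simeq\ca(\si,s)$, core by core, where exact relations hold and the Fock-space structure controls each graded summand; the CNP quotient map $\N\T(X)\to\N\O(X)$ then factors through $\ca(\pi,t)$, and injectivity on $A$ comes for free from Theorem \ref{T:CNP is CNP}. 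Your argument needs either this detour through the Fock representation or a genuinely new quantitative bound on $\dist(\pi(a),\K(\pi,t))$; as written, the decisive step is asserted rather than proved.
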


\begin{proof}

Let $(\pi,t)$ be a Nica-covariant representation that admits a gauge action and $\pi$ is injective.
To allow comparisons we denote the Fock representation by $(\si, s)$ and write $\psi^t$ and $\psi^s$ for the induced representations on the compacts $\K X_{\un{n}}$.
Denote also $q: = \pi \times t$ and $q':= \Psi \circ (\sigma \times s)$ where $\Psi : \N \T(X) \rightarrow \N \O(X)$ is the natural quotient map by the CNP ideal.

It will suffice to show that $\ker q \subseteq \ker q' = \K(\si, s)$. 
This way, the map $q'$ factors through $\ca(\pi,t)$ to a $\bT^N$-equivariant map $\ca(\pi,t) \rightarrow \N\O(X)$ which is still injective on $A$, and the GIUT finishes the proof.
%So we verify that $\ker q \subseteq \ker q' = \K(\si, s)$. 
As $(\pi, t)$ admits a $\bT^N$-gauge action $\beta$ %, it suffices to show that $(\ker q)^\be \subseteq \K(\si, s)$.
%Indeed if $E$ denotes the faithful conditional expectations of the gauge actions (that intertwine the quotient maps) and $f \in \ker q$ then $E(f^*f) \in (\ker q)^\be$.
%Given that $(\ker q)^\be \subseteq \K(\pi,t)$ we derive that
%\[
%E(q'(f)^* q'(f)) = q'(E(f^*f)) = 0
%\]
%and faithfulness of $E$ implies that $f \in \K(\pi,t)$.
and $\ca(\si, s)^\be$ is the inductive limit of the $\B_{[\un{0}, m \cdot \un{1}]}$ with $m \in \bZ_+$, it suffices to show that
\[
(\ker q)^\be \bigcap \B_{[\un{0}, m \cdot \un{1}]} \subseteq \K(\si, s) \foral m \in \bZ_+.
\]
We will do this inductively.
As $\pi$ is injective there is nothing to show for $m=0$.
So take $m=1$ and fix
\[
\ker q \ni f = \si(a_{\un{0}}) + \sum \{ \psi_{\un{n}}^s(k_{\un{n}}) \mid 0 \neq \un{n} \leq \un{1} \}.
\]
By solving the equation $q(f) = 0$ as in Section \ref{S:pol eq} we derive that
\[
\pi(a_{\un{0}}) \prod_{i \in [N]} (I - q(p_{\Bi})) = 0,
\]
where $p_{\Bi}$ are the projections associated to $(\si, s)$.
This gives that $\si(a_{\un{0}}) \prod_{i \in [N]} (I - p_{\Bi}) \in \ker q$ and also that $a_{\un{0}} \in \I_{[N]}$.
Hence
\[
\si(a_{\un{0}}) \prod_{i \in [N]} (I - p_{\Bi}) = \si(a_{\un{0}}) q_{[N]} \in (\ker q)^\be \bigcap \K(\si, s).
\]
For convenience set 
\[
f_{\un{0}} := \si(a_{\un{0}}) \prod_{i \in [N]} (I - p_{\Bi}).
\]
Then $f - f_{\un{0}} \in (\ker q)^\be$ and we can group compacts of the same index to have an equation of the form
\[
g:= f - f_{\un{0}} = \sum \{ \psi_{\un{n}}^s(k_{\un{n}}') \mid 0 \neq \un{n} \leq \un{1} \}
\]
for appropriate $k_{\un{n}}' \in \K(X_{\un{n}})$.
That is, the element $g \in (\ker q)^\be$ can be written as having its $\un{0}$-summand equal to zero.
Suppose now that $\un{m} \leq \un{1}$ is a minimal position such that $\psi_{\un{m}}^s(k_{\un{m}}') \neq 0$.
For $\xi_{\un{m}}, \eta_{\un{m}} \in X_{\un{m}}$ we then obtain
\[
s_{\un{m}}(\xi_{\un{m}})^* g s_{\un{m}}(\eta_{\un{m}}) 
= 
\si(b_{\un{0}}) + \sum \{ \psi_{\un{n}}^s(k_{\un{n}}'') \mid \un{n} \leq \un{1} - \un{m} \ \}
\in (\ker q)^\be
\]
where $b_{\un{0}} = \sca{\xi_{\un{m}}, k_{\un{m}}' \eta_{\bo{j}}}$.
Proceeding as before we have 
\[
\pi(b_{\un{0}}) \prod_{i \in [N] \setminus \supp{\un{m}}} (I - q(p_{\Bi})) = 0.
\]
Once more this gives that 
\[
\si(b_{\un{0}}) \prod_{i \in [N] \setminus \supp{\un{m}}} (I - p_{\Bi}) 
\in 
(\ker q)^\be \bigcap \K(\si, s).
\]
At the same time, for $\xi_{\un{m}}', \eta_{\un{m}}' \in X_{\un{m}}$, denote $k_{1, \un{m}} = \theta^{\un{m}}_{\xi_{\un{m}}, \xi_{\un{m}}'}$ and $k_{2, \un{m}} = \theta^{\un{m}}_{\eta_{\un{m}}, \eta_{\un{m}}'}$, so that
\begin{align*}
0 & =
t_{\un{m}}(\xi_{\un{m}}') \bigg( \pi(b_{\un{0}}) \prod_{i \in [N] \setminus \supp{\un{m}}} (I - q(p_{\Bi})) \bigg)  t_{\un{m}}(\eta_{\un{m}}')^* \\
& =
t_{\un{m}}(\xi_{\un{m}}') t_{\un{m}}(\xi_{\un{m}})^* \psi^t_{\un{m}}(k_{\un{m}}) t_{\un{m}}(\eta_{\un{m}}) t_{\un{m}}(\eta_{\un{m}}')^* \prod_{i \in [N] \setminus \supp{\un{m}}} (I - q(p_{\Bi})) \\
& =
\psi_{\un{m}}^t(k_{1, \un{m}} \cdot k_{\un{m}} \cdot k_{2, \un{m}}) \prod_{i \in [N] \setminus \supp{\un{m}}} (I - q(p_{\Bi}))
\end{align*}
where we used Proposition \ref{P:pf reducing} to commute $t_{\un{m}}(\eta_{\un{m}}')$ with $\prod_{i \in [N] \setminus \supp{\un{m}}} (I - q(p_{\Bi}))$.
As this holds for elementary compacts, it still holds when replacing $k_{1, \un{m}}$ and $k_{2, \un{m}}$ with a c.a.i. in $\K X_{\un{m}}$.
Taking limits yields
\[
f_{\un{m}}:= \psi_{\un{m}}^s(k_{\un{m}}) \prod_{i \in [N] \setminus \supp{\un{m}}} (I - p_{\Bi}) \in  (\ker q)^\be \bigcap \K(\si, s).
\]
Hence, we get that
\[
f - (f_{\un{0}} + f_{\un{m}}) \in (\ker q)^\be \text{ with } f_{\un{0}}, f_{\un{m}} \in (\ker q)^\be \bigcap \K(\si, s).
\]
Now we see that this element does not contain a summand on $\un{0}$ and $\un{m}$.
We proceed inductively along the minimal remaining positions each time to find
\[
f_{\un{n}} \in (\ker q)^\be \bigcap \K(\si, s) \foral \un{0} \leq \un{n} < \un{1}
\]
such that
\[
g_{\un{1}} : = f - \sum\{f_{\un{n}} \mid \un{0} \leq \un{n} < \un{1} \} \in (\ker q)^\be.
\]
However $g_{\un{1}} \in \psi^s_{\un{1}}(\K X_\un{1})$ and $\psi^t_{\un{1}} = q \psi_{\un{1}}^s$ is injective as $\pi$ is injective.
Therefore $g_{\un{1}} = 0$ and we get that
\[
f = \sum\{f_{\un{n}} \mid \un{0} \leq \un{n} < \un{1} \} \in \K(\si, s).
\]
We conclude that
\[
(\ker q)^{\be} \cap \B_{[\un{0}, \un{1}]} \subseteq \K(\sigma, s).
\]
We use this to show that 
\[
(\ker q)^{\be} \cap \B_{[\un{0}, m \cdot \un{1}]} \subseteq \K(\sigma, s) \foral m \in \bN.
\]
We will proceed by adding one generator each time.
That is, assuming we have that $(\ker q)^{\be} \cap \B_{[\un{0}, m \cdot \un{1}]} \subseteq \K(\sigma, s)$ for a given $m$, we will prove inductively the inclusions
\[
(\ker q)^{\be} \cap \B_{[\un{0},m \cdot \un{1} + \bo{1} + ... +\bo{k}]} \subseteq \K(\sigma, s) \qfor k=1, \dots, N,
\]
so that for $k=N$ we will obtain $(\ker q)^{\be} \cap \B_{[\un{0}, (m+1) \un{1}]} \subseteq \K(\sigma, s)$.
To this end suppose that $(\ker q)^\be \cap  \B_{[\un{0}, \un{k}]} \subseteq \K(\sigma, s)$, for some $\un{k} \geq \un{1}$ and let $j \in [N]$; we will show that
\[
(\ker q)^\be \cap  \B_{[\un{0}, \un{k} + \bo{j}]} \subseteq \K(\sigma, s).
\]
Let $f \in (\ker q)^\be \cap  \B_{[\un{0}, \un{k} + \bo{j}]}$ and fix the set
\[
\F := \{ \un{n} \leq \un{k} \mid \bo{j} \notin \supp \un{n}\}.
\]
By moving along minimal elements as before, we derive that there are $f_{\un{n}}$, for $\un{n} \in \F$, such that
\[
g:= f - \sum \{ f_{\un{n}} \mid \un{n} \in \F\} \in (\ker q)^\be 
\qand 
f_{\un{n}} \in (\ker q)^\be \bigcap \K(\si, s) \foral \un{n} \in \F.
\]
By construction we have that $g$ is in $\B_{[\bo{j},\un{k} + \bo{j}]}$.
Therefore the c.a.i. of $\psi^s_{\bo{j}}(\K X_{\bo{j}})$ defines an approximate identity for $g$.
Moreover we have that
\begin{align*}
s_{\bo{j}}(X_{\bo{j}})^* g s_{\bo{j}}(X_{\bo{j}}) 
& \subseteq 
s_{\bo{j}}(X_{\bo{j}})^* \left((\ker q)^\be \bigcap \B_{[\un{0}, \un{k} + \bo{j}]}\right) s_{\bo{j}}(X_{\bo{j}})
\subseteq 
(\ker q)^\be \bigcap \B_{[\un{0}, \un{k}]}.
\end{align*}
%However this would imply that
%\[
%t_{\bo{1}}(X_{\bo{1}})^* g t_{\bo{1}}(X_{\bo{1}}) \subseteq (\ker q)^\be \bigcap \K(\pi,t).
%\]
Consequently we get that
\[
\psi^s_{\bo{j}}(\K X_{\bo{j}}) g \psi^s_{\bo{j}}(\K X_{\bo{j}}) \subseteq (\ker q)^\be \bigcap \K(\si, s).
\]
By applying a c.a.i. of $\psi^s_{\bo{j}}(\K X_{\bo{j}})$ (which is a c.a.i. for $g$) we obtain that 
\[
g \in (\ker q)^\be \bigcap \K(\si, s),
\]
and therefore
\[
f = g + \sum \{ f_{\un{n}} \mid \un{n} \in \F\} \in (\ker q)^\be \bigcap \K(\si, s).
\]
Hence, proceeding inductively we have that $(\ker q)^{\be} \cap \B_{[\un{0}, m\cdot \un{1}]} \subseteq \K(\sigma, s)$ for every $m\in \bN$, so that $(\ker q)^{\be} \subseteq \K(\sigma, s)$, as required.
\end{proof}

%%%%%%%%%%%%%%%%%%%%%%%%%%%%%%%%
\section{Regular product systems}\label{S:regular}
%%%%%%%%%%%%%%%%%%%%%%%%%%%%%%%%

Recall that a product system is called \emph{regular} if every $X_{\un{n}}$ is regular, and that every regular product system is automatically strong compactly aligned.
Fowler \cite{Fow02} introduced $\O_X$ as the universal C*-algebra for the representations $(\pi,t)$ such that $\pi(a) = \psi_{\un{n}}(\phi_{\un{n}}(a))$ for every $a\in A$ and $\un{n}\in \bZ_+^N$.
It is shown in \cite[Proposition 5.1]{SY11} that $\N\O_X$ coincides with $\O_X$ when $X$ is regular, and thus so does $\N\O(X)$ by Theorem \ref{T:CNP is CNP}.
Let us provide a direct proof that $\N\O(X) \simeq \O_X$ by using the alternating sums machinery.
%The theory of $\O_{\textup{cov}}(X)$ has been quite effective to address non-degenerate and regular product systems, i.e., when every $X_{\un{n}}$ is non-degenerate and regular.
%In this case \cite[Lemma 3.3]{DKPS10} implies that $\O_{\textup{cov}}(X)$ satisfies the GIUT. 
%Nevertheless, for product systems over $\bZ_+^N$ we can directly obtain \cite[Lemma 3.3]{DKPS10} without assuming non-degeneracy.
En passant, we show that covariance needs to be checked only for $\un{n} \leq \un{1}$.

%%%%%%%%%%%%%%%%%%%%%%%%%%%%%%%%
\begin{corollary}
Let $X$ be a regular product system over $\bZ_+^N$. 
Then $\N\O(X)$ is $*$-isomorphic to $\O_X$. In particular, a representation $(\pi,t)$ is covariant in the sense of \cite{Fow02} if and only if each $(\pi,t_{\un{n}})$ is covariant for all $\un{n} \leq 1$.
\end{corollary}

\begin{proof}
Suppose that $(\pi,t)$ is a representation such that $(\pi, t_{\un{n}})$ is covariant for every $\un{n} \in \bZ_+^N$.
By \cite[Proposition 5.4]{Fow02} we have that $(\pi,t)$ is automatically Nica-covariant. 
Then for any $\emptyset \neq F \subseteq [N]$ and $a \in A$ we check that
\begin{align*}
\sum \{ (-1)^{|\un{n}|} \psi_{\un{n}}(\phi_{\un{n}}(a)) \mid \un{n} \leq \un{1}_F \}
& =
\sum \{ (-1)^{|\un{n}|} \pi(a) \mid \un{n} \leq \un{1}_F \} 
= 0.
\end{align*}
Since $X$ is regular we have that $\I_F = A$ for all $F \subseteq [N]$ and so $(\pi,t)$ is a CNP-representation.

Conversely let $(\pi,t)$ be a CNP-representation.
First we show that $(\pi, t_{\un{n}})$ is covariant for every $\un{n} \leq \un{1}$.
For $F = \{i\}$ regularity implies that $\I_F = A$ and thus $(\pi, t_\Bi)$ is covariant for $X_{\Bi}$.
Now for $F = \{i, j\}$ we have that the CNP representation satisfies
\[
\pi(a) - \psi_{\Bi}(\phi_{\Bi}(a)) - \psi_{\bo{j}}(\phi_{\bo{j}}(a)) + \psi_{\Bi + \bo{j}}(\phi_{\bo{i} + \bo{j}}(a)) = 0.
\]
As $(\pi, t_\Bi)$ and $(\pi, t_\bo{j})$ are covariant for all $a \in A$ we get that
\begin{align*}
0
& = \pi(a) -  \psi_{\Bi}(\phi_{\Bi}(a)) - \psi_{\bo{j}}(\phi_{\bo{j}}(a)) + \psi_{\Bi + \bo{j}}(\phi_{\bo{i} + \bo{j}}(a))
 =
\pi(a) - \pi(a) - \pi(a) + \psi_{\Bi + \bo{j}}(a)
\end{align*}
and therefore $\pi(a) = \psi_{\Bi + \bo{j}}(a)$.
For the inductive step suppose that $(\pi, t_{\un{n}})$ is covariant for all $|\un{n}| \leq k$ and let $F$ be of cardinality $k+1$.
We have to show that $(\pi, t_{\un{1}_F})$ is covariant for $X_{\un{1}_F}$.
Fix $\un{x} = \un{1}_F - \bo{i}$ for $i \in F$ and compute
\begin{align*}
0 
& = 
\sum \{ (-1)^{|\un{n}|} \psi_{\un{n}}(\phi_{\un{n}}(a)) \mid \un{n} \leq \un{1}_F \} \\
& =
(-1)^{k} \psi_{\un{x}}(\phi_{\un{x}}(a)) + (-1)^{k+1} \psi_{\un{1}_F}(\phi_{\un{1}_F}(a)) + \sum \{ (-1)^{|\un{n}|} \pi(a) \mid \un{1}, \un{x} \neq \un{n} \lneq \un{1}_{F} \} \\
& =
(-1)^{k} \psi_{\un{x}}(\phi_{\un{x}}(a)) + (-1)^{k+1}\psi_{\un{1}}(\phi_{\un{1}_F}(a)).
\end{align*}
Thus we derive
\[
\psi_{\un{1}_F}(\phi_{\un{1}_F}(a)) = \psi_{\un{x}}(\phi_{\un{x}}(a)) = \pi(a).
\]
This finishes the part that $(\pi, t_\un{n})$ is covariant for $\un{n}\leq \un{1}$. 
Now let $\un{m} \in \bZ_+^N$ such that $\un{m} \not\leq \un{1}$ and suppose we have shown that $(\pi, t_{\un{x}})$ is covariant for all $\un{x} < \un{m}$. 
Set $\un{n} = \sum \{ \Bi \mid {i \in \supp \un{m}} \}$ and fix a c.a.i. $(a_\la)_\la$ for $A$. 
By applying on the elementary tensors in $X_{\un{m}} \simeq X_{\un{n}} \otimes X_{\un{m} - \un{n}}$ and using the Nica-covariance we get that $\phi_{\un{n}}(a_\la) \vee \phi_{\un{m}-\un{n}}(a) = \phi_{\un{m}}(a_\la a)$ and hence $\lim_\la \phi_{\un{n}}(a_\la) \vee \phi_{\un{m}-\un{n}}(a) = \phi_{\un{m}}(a)$.
Therefore we obtain
\begin{align*}
\pi(a) 
& = 
\lim_\la \pi(a_\la) \pi(a) 
 = 
\lim_\la \psi_{\un{n}}(\phi_{\un{n}}(a_\la)) \psi_{\un{m}-\un{n}}(\phi_{\un{m}-\un{n}}(a)) \\
& = 
\lim_\la \psi_{\un{m}} (\phi_{\un{n}}(a_\la) \vee \phi_{\un{m}-\un{n}}(a)) 
 = 
\psi_{\un{m}}(\phi_{\un{m}}(a))
\end{align*}
which finishes the proof.
\end{proof}

%%%%%%%%%%%%%%%%%%%%%%%%%%%%%%%%
\section{C*-dynamical systems}\label{S:ds}
%%%%%%%%%%%%%%%%%%%%%%%%%%%%%%%%

The second author with Davidson and Fuller considered C*-dynamical systems over $\bZ_+^N$ in \cite[Section 4]{DFK14}.
We will show how the Cuntz-Nica-Pimsner algebra of \cite{DFK14} relates to one coming from a product system.
To this end let $\al \colon \bZ_+^N \to \End(A)$ be a semigroup action.
For $\un{n} \in \bZ_+^N$ let $X_{\un{n}}$ be the closed linear subspace of $A$ generated by $\al_{\un{n}}(A) A$.
It becomes a C*-correspondence over $A$ when endowed with
\[
\sca{\xi, \eta} = \xi^* \eta \qand a \cdot \xi \cdot b = \al_{\un{n}}(a) \xi b
\]
for $\xi, \eta \in X_{\un{n}}$ and $a, b \in A$.
The unitary equivalence $X_{\un{n}} \otimes X_{\un{m}} \simeq X_{\un{n} + \un{m}}$ is induced by the $A$-balanced bilinear map $(\xi, \eta) \mapsto \al_{\un{m}}(\xi) \eta$.
The induced operator is surjective since
\[
\al_{\un{m}}(\al_{\un{n}}(A) A) \cdot \al_{\un{m}}(A) A = \al_{\un{n}+ \un{m}}(A) \al_{\un{m}}(A) A \subseteq \al_{\un{n} + \un{m}}(A) A
\]
and, if $(a_\la)$ is an approximate identity in $A$, then
\[
\al_{\un{n} + \un{m}}(a) b
=
\lim_\la \al_{\un{m}}(\al_{\un{n}}(a)) \al_{\un{m}}(a_\la) b \in 
\ol{\al_{\un{n}+ \un{m}}(A) \al_{\un{m}}(A) A}.
\]
Commutativity shows that the family $\{ X_{\un{n}} \mid n \in \bZ_+^N\}$ forms a product system $X$.
In particular the left action on every $X_{\Bi}$ is by compacts and thus $X$ is strong compactly aligned.
Then the ideals $\I_F$ coincide with those given in \cite{DFK14} by
\[
\I_F = \bigcap_{\un{n} \perp F} \al_{\un{n}}^{-1} \Big(\big( \bigcap_{i \in F} \ker\al_\Bi \big)^\perp\Big) . 
\]
The representations in \cite{DFK14} are given by $(\pi, V)$ where $V$ is an isometric Nica-covariant representa\-tion of $\bZ_+^N$, $\pi(a) V_{\un{n}} = V_{\un{n}} \pi\al_{\un{n}}(a)$.
The CNP-representations are defined in \cite{DFK14} as those pairs $(\pi,V)$ that in addition satisfy
\[
\pi(a) \prod \{I - V_{\Bi} V_{\Bi}^* \mid i \in F\} = 0 \foral a \in \I_F.
\]
Therefore the representations of $\al \colon \bZ_+^N \to \End(A)$ are representations of the induced product system.
From this point on there is a small subtlety between the Nica-Pimsner alebras of $X$ and those in \cite{DFK14}.
To keep comparisons, the universal objects are denoted by $\N\T(A,\al)$ and $\N\O(A,\al)$ in \cite{DFK14}, and it is shown that they satisfy the GIUT.
However $\N\T(A,\al)$ is defined to be generated by $V_{\un{n}} \pi(a)$ for $a \in A$ rather than for $a \in \al_{\un{n}}(A) A$.
Hence in general we have 
\[
\N\T(X) \subseteq \N\T(A,\al) \qand \N\O(X) \subseteq \N\O(A,\al).
\]
Although the representations in \cite{DFK14} may not be all the Nica-covariant representations of $X$, they are enough to norm $\N\T(X)$ and $\N\O(X)$.
This is just an application of the GIUT.
Of course when the $\al_{\un{n}}$ are non-degenerate in the sense that $\alpha_{\un{n}}(A)A = A$, then $X_{\un{n}} = A$ and the C*-algebras coincide.

If, in addition, every $\al_{\Bi}$ is injective then $X$ is regular and $\N\O(A,\al) \simeq \O_X$.
In fact, the latter is a C*-crossed product over the minimal automorphic extension of $\al$ given by the direct limit process
\[
\xymatrix{
A \ar[rr]^{\al_{\un{n}}} \ar[d]^{\al_{\un{m}}} & & A \ar[d]^{\al_{\un{m}}} \ar@{.>}[rr] & & A_{\infty} \ar[d]^{\al_{\infty, \un{m}}} \\
A \ar[rr]^{\al_{\un{n}}} & & A \ar@{.>}[rr] & & A_{\infty}.
}
\]

%%%%%%%%%%%%%%%%%%%%%%%%%%%%%%%%
\begin{example}
One-dimensional dynamical systems over commutative C*-algebras $C_0(X)$ are topological graphs in the sense of Katsura \cite{Kat03}.
More generally, a product system that arises from $\al \colon \bZ_+^N \to \End(C_0(X))$ coincides with the topological higher graph of Yeend \cite{Yee07}.
The reader is addressed to \cite[Section 5.3]{CLSV11} for an excellent exposition that connects topological graphs with their product systems.

By duality $\al$ transforms to a $\bZ_+^N$-action on $X$ by proper continuous maps.
Afsar, an Huef and Raeburn \cite{AHR17} considered product systems arising from surjective local homeomorphisms that $*$-commute.
In this case the induced $\O(X)$ of \cite{AHR17} coincides with $\O_X$ of \cite{Fow02} and $\N\O(A,\al)$ of \cite{DFK14, Kak15} as the induced $\al_{\Bi}$ are injective.
In fact they coincide with the C*-crossed product of the minimal automorphic extension.
\end{example}

%%%%%%%%%%%%%%%%%%%%%%%%%%%%%%%%
\section{Higher rank graphs}\label{S:hrg}
%%%%%%%%%%%%%%%%%%%%%%%%%%%%%%%%

We will require some terminology on higher rank graphs and their associated product systems. For more details the reader is addressed to \cite{RS03, RSY03, RSY04}.

Let $G= (V,E,r,s)$ be a directed graph, and partition the edge set $E=E_1 \cup \cdots \cup E_N$ such that each edge carries a unique colour from a selection of $N$ colours. 
Denote by $E^{\bullet}$ the collection of all paths in $G$. 
We may then define a multi-degree function $d \colon E^{\bullet} \rightarrow \bZ_+^N$ by $d(\lambda) = (n_1, \dots, n_N)$, where $n_i$ is the number of edges in $\lambda$ from $E_i$.

A \emph{higher rank $N$-structure} on $G$ is an equivalence relation $\sim$ on $E^{\bullet}$ such that for all $\lambda \in E^{\bullet}$ and $\un{m},\un{n} \in \bZ_+^N$ with $d(\lambda) = \un{m} + \un{n}$, there exist unique $\mu,\nu \in E^{\bullet}$ with $s(\lambda) = s(\nu)$ and $r(\lambda) = r(\mu)$, such that $d(\mu)=\un{n}$ and $d(\nu) = \un{m}$ and $\lambda \sim \mu \nu$. 
It is important to note that paths are read from right to left to comply with operator multiplication.
We denote $\La := E^{\bullet} / \sim$ and keep denoting by $d$ the induced multi-degree map on $\La$. 
It is usual to still denote by $\la, \mu$ etc. the elements of $\La$.
In this way the pair $(\La,d)$ is a \emph{higher rank graph} as in \cite[Definition 2.1]{RSY03}. 

For each $\un{n}\in \bZ_+^N$ we write $\La^{\un{n}} := \{\lambda \in \La  \mid d(\lambda) = \un{n} \}$.
For $\lambda \in \La$ and $\S \subseteq \La$, we define $\lambda \S := \{\lambda \mu \mid \mu\in \S \} $ and $\S \lambda := \{\mu \lambda \mid \mu \in \S \}$.
For $\la, \mu \in \La$ let
\[
\La^{\min}(\la, \mu) := \{(\al, \be) \mid \la \al = \mu \be, d(\la \al) = d(\la) \vee d(\mu) = d(\mu\be) \}
\]
be the set of \emph{minimal common extensions of $\la$ and $\mu$.} 
The higher rank graph $(\La,d)$ is called \emph{finitely aligned} if $|\La^{\min}(\la, \mu)|<\infty$ for any $\lambda,\mu \in \La$.
Given a vertex $v\in \La^{\un{0}}$, a subset $\S \subseteq v \La$ is called \emph{exhaustive} if for every $\la \in v\La$ there is $\mu \in \S$ such that $\La^{\min}(\la, \mu) \neq \emptyset$. 

A set of partial isometries $\{T_{\lambda}\}_{\lambda \in \La}$ for a finitely aligned higher rank graph $(\La,d)$ is called a \emph{Toeplitz-Cuntz-Krieger $\La$-family} if
\begin{enumerate}
\item[(P)]
$\{T_v\}_{v\in \La^{\un{0}}}$ is a collection of pairwise orthogonal projections;
\item[(HR)]
$T_{\la} T_{\mu} = \de_{s(\la), r(\mu)} T_{\la \mu}$ for all $\la, \mu \in \La$; and
\item[(NC)]
$T_{\la}^* T_{\mu} = \sum_{(\alpha,\beta)\in \La^{\min}(\la, \mu)} T_{\alpha} T_{\beta}^*$ for all $\la, \mu \in \La$.
\end{enumerate}
It is called a \emph{Cuntz-Krieger $\La$-family} if it additionally satisfies
\begin{enumerate}
\item[(CK)]
$\prod_{\lambda \in \S}(T_v - T_{\lambda} T_{\lambda}^*) = 0$ for every $v\in \La^{\un{0}}$ and all non-empty finite exhaustive sets $\S \subseteq v\La$.
\end{enumerate}
The C*-algebra $\ca(\La)$ is the universal one with respect to the Cuntz-Krieger $\La$-families.
In \cite[Theorem 4.2]{RSY04} it was proven that $\ca(\La)$ satisfies the GIUT theorem.

Every higher rank graph $(\La,d)$ has a natural product system $X(\La)$ associated to it. 
In short for each $\un{n} \in \bZ_+^N$ we put a pre-Hilbert $c_0(\La^{\un{0}})$-module structure on $c_0(\La^{\un{n}})$ via the formulas
\[
\sca{\xi, \eta}(v) := \sum_{s(\lambda) = v}\overline{\xi(\lambda)}\eta(\lambda) \qand (a\cdot \xi \cdot b)(\lambda) := a(r(\lambda))\xi(\lambda)b(s(\lambda)).
\]
The completion $X_{\un{n}}(\La)$ of these pre-Hilbert modules then gives a product system where the identification $X_{\un{m}}(\La) \otimes X_{\un{n}}(\La) \cong X_{\un{m}+\un{n}}(\La)$ is given by
\[
\delta_{\mu} \otimes \delta_{\nu} =
\begin{cases}
\delta_{\mu \nu} & \textup{ when $s(\mu) = r(\nu)$},\\
0 & \textup{ if $s(\mu) \neq r(\nu)$}.
\end{cases}
\] 
It is shown in \cite[Theorem 5.4]{RSY03} that $X(\La)$ is compactly aligned if and only if $\La$ is finitely aligned. 
In this case, the Nica-covariant representations $(\pi,t)$ of $X(\La)$ are in bijection with the Toeplitz-Cuntz-Krieger $\La$-families.
The correspondence is given by $T_{\lambda} = t_{\un{n}}(\delta_{\lambda})$.

%%%%%%%%%%%%%%%%%%%%%%%%%%%%%%%%
\begin{proposition} \label{P:sca-graph}
Let $(\La,d)$ be a finitely aligned higher $N$-rank graph.
Then $X(\La)$ is strong compactly aligned if and only if for every $\lambda \in \La$ and every $\bo{i} \perp d(\la)$ there are finitely many edges $e \in d^{-1}(\bo{i})$ such that $\La^{\min}(\la, e) \neq \emptyset$. 
\end{proposition}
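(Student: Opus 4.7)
The plan is to work directly with the canonical rank-one operators $\theta^{X_{\un{n}}}_{\delta_\la, \delta_\mu}$ for $\la, \mu \in \La^{\un{n}}$ with $s(\la) = s(\mu)$, which densely span $\K X_{\un{n}}(\La)$. The key basis computation is this: if $\un{n} \perp \bo{i}$ and $\nu \in \La^{\un{n}+\bo{i}}$ factorizes as $\nu = \nu_1 \nu_2$ with $d(\nu_1) = \un{n}$, then
\[
\bigl(\theta^{X_{\un{n}}}_{\delta_\la, \delta_\mu} \otimes \id_{X_{\bo{i}}}\bigr)(\delta_{\nu_1}\otimes \delta_{\nu_2}) = \delta_{\mu, \nu_1}\, \delta_{\la \nu_2},
\]
so the operator sends $\delta_{\mu\alpha}$ to $\delta_{\la\alpha}$ for every $\alpha \in s(\la)\La^{\bo{i}}$ and vanishes on the remaining basis vectors.

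For the backward implication I would assume the stated finiteness condition. Because $\La$ is finitely aligned, each $\La^{\min}(\la, f)$ is finite, and the map from $s(\la)\La^{\bo{i}}$ to $\{f \in \La^{\bo{i}} \mid \La^{\min}(\la, f) \neq \mt\}$ sending $\alpha$ to the $\bo{i}$-initial factor of $\la\alpha$ has fibers in bijection with $\La^{\min}(\la, f)$ and image equal to the hypothesized finite set. Hence $s(\la)\La^{\bo{i}} = \{\alpha_1, \dots, \alpha_k\}$ is finite, and the computation above yields the explicit identity
\[
\theta^{X_{\un{n}}}_{\delta_\la, \delta_\mu} \otimes \id_{X_{\bo{i}}} = \sum_{j=1}^k \theta^{X_{\un{n}+\bo{i}}}_{\delta_{\la \alpha_j}, \delta_{\mu \alpha_j}} \in \K X_{\un{n}+\bo{i}}.
\]
Passing to closed linear spans, together with norm-continuity of the $*$-homomorphism $k \mapsto k \otimes \id_{X_{\bo{i}}}$, then gives $\K X_{\un{n}} \otimes \id_{X_{\bo{i}}} \subseteq \K X_{\un{n}+\bo{i}}$ for every $\un{n} \perp \bo{i}$.

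For the forward implication, fix $\la \in \La$ and $\bo{i} \perp d(\la)$, set $\un{n} = d(\la)$, and argue by contradiction. Suppose there exist distinct $\bo{i}$-edges $e_1, e_2, \dots$ with $\La^{\min}(\la, e_n) \neq \mt$, and choose $(\alpha_n, \beta_n) \in \La^{\min}(\la, e_n)$, so $\la \alpha_n = e_n \beta_n$ in $\La^{\un{n}+\bo{i}}$. Since $e_n$ is recovered as the unique $\bo{i}$-initial factor of $\la\alpha_n$, the basis vectors $\delta_{\la\alpha_n}$ are pairwise distinct unit vectors in the Fock module $\F(X(\La))$. Pass to the Fock representation $(\si, s)$, on which $\psi_{\un{n}+\bo{i}}$ is isometric on $\K X_{\un{n}+\bo{i}}$; by the basis computation $\psi_{\un{n}+\bo{i}}\bigl(\theta^{X_{\un{n}}}_{\delta_\la, \delta_\la}\otimes \id\bigr)$ fixes every $\delta_{\la\alpha_n}$. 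However, any finite sum $T = \sum c_{\sigma\tau}\, s(\delta_\sigma) s(\delta_\tau)^*$ over $\sigma, \tau \in \La^{\un{n}+\bo{i}}$ involves only finitely many indices $\tau$, and $s(\delta_\tau)^* \delta_{\la\alpha_n}$ is nonzero precisely when $\tau = \la\alpha_n$. Thus $T\delta_{\la\alpha_n} = 0$ for all but finitely many $n$, forcing
\[
\bigl\|\psi_{\un{n}+\bo{i}}\bigl(\theta^{X_{\un{n}}}_{\delta_\la, \delta_\la}\otimes \id\bigr) - T\bigr\| \geq 1,
\]
which contradicts the assumption that $\theta^{X_{\un{n}}}_{\delta_\la, \delta_\la}\otimes \id \in \K X_{\un{n}+\bo{i}}$.

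The main obstacle is the forward direction: abstract non-compactness in a Hilbert $C^*$-module must be turned into a concrete testable obstruction. The Fock representation converts the problem into an approximation problem among the $\delta_\nu$, and the pairwise distinctness of the $\la\alpha_n$, which rests on the uniqueness of factorization in the alternative color order, supplies the cofinite family of unit vectors that defeats every proposed finite rank-one approximation.
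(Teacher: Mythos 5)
Your proof is correct and follows essentially the same route as the paper's: both identify $i_{\un{n}}^{\un{n}+\bo{i}}(\theta_{\delta_\la,\delta_\mu})$ with the orthogonal sum of rank-one operators $\theta_{\delta_{\la\al},\delta_{\mu\al}}$ over $\al \in s(\la)\La^{\bo{i}}$ --- equivalently, over the minimal common extensions of $\la$ with edges of colour $i$ --- and observe that compactness of this element is equivalent to finiteness of the index set. The only differences are cosmetic: your Fock-representation distance estimate makes explicit the step the paper merely asserts (``if the latter is in $\K$ then the sum has to be finite''), and your direct formula for $\theta_{\delta_\la,\delta_\mu}\otimes\id_{X_{\bo{i}}}$ replaces the paper's factorization $i_{\un{n}}^{\un{n}+\bo{i}}(\theta_{\la,\mu}) = i_{\un{n}}^{\un{n}+\bo{i}}(\theta_{\la,\la})\, i_{\un{n}}^{\un{n}+\bo{i}}(\theta_{\la,\mu})$.
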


\begin{proof}
We identify each $\delta_{\nu}$ with $\nu \in \La$.
Since $X(\La)$ is compactly aligned, for $\la, \mu \in \La$ we have that
\[
\theta_{\la, \la} \vee \theta_{\mu, \mu} 
= 
\sum \{ \theta_{\la \al, \mu \be} \mid (\al, \be) \in \La^{\min}(\la, \mu) \} 
\]
is a finite sum.
On the other hand we have $I_{\L X_{\bo{i}}} = \textup{s*-}\sum_{\mu \in X_{\bo{i}}} \theta_{\mu, \mu}$, where the sum is taken in the s*-topology.
Therefore, if $d(\la) = \un{n}$ and $i\notin \supp \un{n}$ then
\begin{align*}
i_{\un{n}}^{\un{n} + \bo{i}}(\theta_{\la, \la}) 
& = 
\textup{s*-}\sum_{\mu \in X_{\bo{i}}} \theta_{\la, \la} \vee \theta_{\mu, \mu} 
 =
\textup{s*-}\sum_{\mu \in X_{\bo{i}}} \sum \{ \theta_{\la \al, \mu \be} \mid (\al, \be) \in \La^{\min}(\la, \mu) \}.
\end{align*}
If the latter is in $\K (X_{\un{n} + \bo{i}}(\La))$ then the sum has to be finite. 

Conversely, if $\lambda,\nu \in \La^{\un{n}}$ and the sum for $i_{\un{n}}^{\un{n} + \bo{i}}(\theta_{\la, \la})$ is finite, then each $i_{\un{n}}^{\un{n}+\bo{i}}(\theta_{\lambda,\mu}) = i_{\un{n}}^{\un{n}+\bo{i}}(\theta_{\lambda,\lambda}) i_{\un{n}}^{\un{n}+\bo{i}}(\theta_{\lambda,\mu})$ is given by a finite sum. 
Since elements of the form $\theta_{\la, \mu}$ span a dense set in $\K (X_{\un{n}}(\La))$ we get that $i_{\un{n}}^{\un{n}+\bo{i}}(k) \in \K (X_{\un{n}+\bo{i}}(\La))$ for every $k\in \K (X_{\un{n}}(\La))$ and $i\notin \supp \un{n}$. 
Hence $X(\La)$ is strong compactly aligned.
\end{proof}

We use the following terminology for the higher rank graphs that produce strong compactly aligned product systems.

%%%%%%%%%%%%%%%%%%%%%%%%%%%%%%%%
\begin{definition}
A finitely aligned higher $N$-rank graph $(\La,d)$ is called \emph{strong finitely aligned} if for every $\lambda \in \La$ and every $\bo{i} \perp d(\la)$ there are finitely many edges $e \in d^{-1}(\bo{i})$ such that $\La^{\min}(\la, e) \neq \emptyset$. 
\end{definition}

%%%%%%%%%%%%%%%%%%%%%%%%%%%%%%%%
\begin{example} \label{E:non-sca}
Proposition \ref{P:sca-graph} allows us to build examples of compactly aligned product systems that are not strong compactly aligned. 
Figure 1 yields such an example.
Here we let $\{g_n \mid n \in \bN\}$ be the edges with range $w$ whose sources are all distinct, by $\{h_n \mid n \in \bN\}$ the edges whose range is $v$ with sources all distinct, and by $\{f_n \mid n \in \bN\}$ the edges such that $s(f_n) = s(g_n)$ and $r(f_n) = s(h_n)$. 
If we make the identification $e g_n = h_n f_n$ we would obtain a higher $2$-rank graph.
Since $\La$ is finitely aligned then $X(\La)$ is compactly aligned. 
However, by Proposition \ref{P:sca-graph}, it is not strong compactly aligned as $|\La^{\min}(e,h_n)| = 1$ for all $n \in \bN$.
\begin{figure}[H]
\begin{align*}
\xymatrix@R=1em@C=4em{
\bullet \ar@{-2>}[dddr]^{h_3}_{\dots} & & & & & \bullet \ar@{.2>}[lllll]_{\vdots}^{f_3} \ar@{-2>}[dddl]_{g_3}^{\dots} \\
& \bullet \ar@{-2>}[dd]^{h_2} & & & \bullet \ar@{.2>}[lll]^{f_2}  \ar@{-2>}[dd]_{g_2} & \\
& & \bullet \ar@{-2>}[dl]^{h_1} & \bullet \ar@{.2>}[l]^{f_1} \ar@{-2>}[dr]_{g_1} & & \\
& \underset{v}{\bullet} &  & & \underset{w}{\bullet} \ar@{.2>}[lll]^{e} &
}
\end{align*}
\vspace{1pt}
\begin{quote}
{\footnotesize Figure 1. A finitely aligned higher rank graph whose product system is not strong compactly aligned.}
\end{quote}
\end{figure}
\end{example}

%%%%%%%%%%%%%%%%%%%%%%%%%%%%%%%%
\begin{example}
On the other hand, Proposition \ref{P:sca-graph} also allows us to build examples of strong finitely aligned higher rank graphs that are not row-finite. First of all, it is clear that all rank one directed graphs are strong finitely aligned. 

If we want a non-trivial rank two example, the only thing we need to pay attention to is not to have edges $e$ that interact with infinitely many other edges $f$ that have different colours from $e$.
In Figure 2 we have such an example.
\begin{figure}[H]
\begin{align*}
\xymatrix@R=1em@C=4em{
\bullet \ar@{=2>}[dd]^{(\infty)} & & & \bullet \ar@{.2>}[lll]_{f_0}  \ar@{-2>}[dd]^{e_0} \\
& & & \\
\underset{v}{\bullet} &  & & \bullet \ar@{:2>}[lll]^{(\infty)}
}
\end{align*}
\vspace{1pt}
\begin{quote}
{\footnotesize Figure 2. A higher rank graph that is not row-finite but whose product system is strong compactly aligned.}
\end{quote}
\end{figure}

More precisely, let $\{e_i\}$ be infinitely many bold edges received by $v$ and $\{f_i\}$ be infinitely many dotted edges received by $v$.
Set the higher rank structure $e_i f_0 = f_i e_0$.
Then for a fixed $i$ the set $\Lambda^{\min}(e_i, f_j)$ is empty unless $j = i$, for which it is of size one. 
Thus by Proposition \ref{P:sca-graph} the graph is strong finitely aligned.
\end{example}

Our next goal is to show that the relation (CK) simplifies for strong finitely aligned higher rank graphs. 
To this end notice that the ideals $\J_F$ and $\I_F$ of $c_0(\La^{\un{0}})$ are generated by vertex projections $\delta_v$. 
By definition $\delta_v \in \J_F$ if and only if $v \La^{\bo{i}} \neq \emptyset$ for at least one $i\in F$, and $|v\La^{\bo{i}}| < \infty$ for all $i\in [N]$.
On the other hand the requirement that $\sca{X_{\un{m}}(\La), \delta_v X_{\un{m}}(\La)} \subseteq \J_F$ for $\un{m} \perp F$ is equivalent to requiring that any path $\lambda \in \La$ with $d(\lambda) \perp F$ and $r(\lambda) = v$ satisfies $\delta_{s(\lambda)} \in \J_F$. Note that the former is automatic given the latter, when $\lambda = v$ is a vertex. We gather this information in the following definition.

%%%%%%%%%%%%%%%%%%%%%%%%%%%%%%%%
\begin{definition}
Let $(\La,d)$ be a strong finitely aligned $N$-rank graph. 
A vertex $v \in \La^{\un{0}}$ is called \emph{$F$-tracing} if for every path $\lambda \in \La$ with $d(\lambda) \perp F$ and $r(\la) = v$ we have $s(\lambda)\La^{\bo{i}} \neq \emptyset$ for at least one $i\in F$, and $|s(\lambda)\La^{\bo{i}}| < \infty$ for all $i\in [N]$.
\end{definition}

By definition, the $F$-tracing vertices span a dense subset of $\I_F$.
Thus by Proposition \ref{P:rem inv IF}, if $\la \in \La$ satisfies $d(\la) \perp F$ and $r(\la)$ is $F$-tracing, then $s(\la)$ is also $F$-tracing.
We give an alternative of the (CK)-condition for strong finitely aligned higher $N$-rank graphs, which we call (CK').
It appears that (CK') is simpler than (CK) as, essentially, it depends only on vertices.
We note that the proof of the following does not depend on the GIUT from \cite{RSY04}.

%%%%%%%%%%%%%%%%%%%%%%%%%%%%%%%%
\begin{theorem}\label{T:cnp is ck}
Let $(\La,d)$ be a strong finitely aligned higher $N$-rank graph. 
Suppose that $(\pi,t)$ is a Nica-covariant representation of $X(\La)$ corresponding to a Toeplitz-Cuntz-Krieger $\La$-family $\{T_{\lambda}\}_{\lambda \in \La}$. 
Then the following are equivalent:
\begin{enumerate}
\item $(\pi,t)$ is a CNP-representation;
\item $\{T_\la\}_{\la \in \La}$ satisfies the \textup{(CK')}-condition:
$\prod \{ T_v - T_{\mu}T_{\mu}^* \mid \mu \in v\La^{\Bi}, i \in F\} = 0$ for every $F$-tracing vertex $v$ and every non-empty $F\subseteq [N]$. 
\item $\{T_\la\}_{\la \in \La}$ is a Cuntz-Krieger $\La$-family.
\end{enumerate}
Consequently, if $\{T_{\la}\}_{\la \in \La}$ satisfies \emph{(CK')}, admits a gauge action and $T_v\neq 0$ for $v\in \La^0$, then it defines a faithful representation of $\ca(\La)$.
\end{theorem}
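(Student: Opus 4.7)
The plan is to prove (i) $\Leftrightarrow$ (ii) by a direct algebraic computation, and then pivot through Theorem \ref{T:CNP is CNP} to obtain (i) $\Leftrightarrow$ (iii), with the faithfulness consequence falling out of the GIUT of \cite{RSY04}. The first observation is that $\I_F$ is densely spanned by the vertex projections $\{\delta_v \mid v \text{ is } F\text{-tracing}\}$, so the CNP relations hold on $\I_F$ iff they hold on each such $\delta_v$. For an $F$-tracing vertex $v$ and any $\un{n} \leq \un{1}_F$, the $F$-tracing hypothesis guarantees that $|v\La^{\un{n}}| < \infty$, so $\phi_{\un{n}}(\delta_v) = \sum_{\mu \in v\La^{\un{n}}} \theta_{\mu, \mu} \in \K X_{\un{n}}(\La)$ and $\psi_{\un{n}}(\phi_{\un{n}}(\delta_v)) = \sum_{\mu \in v\La^{\un{n}}} T_\mu T_\mu^*$.

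Writing $P_i := \sum_{\mu \in v\La^{\bo{i}}} T_\mu T_\mu^*$ for $i \in F$, Nica-covariance together with the unique factorization in $\La$ yields $\prod_{i \in S} P_i = \sum_{\nu \in v\La^{\un{1}_S}} T_\nu T_\nu^* = \psi_{\un{1}_S}(\phi_{\un{1}_S}(\delta_v))$ for every $S \subseteq F$; and since each $T_\mu = T_v T_\mu$, the projections $P_i$ satisfy $T_v P_i = P_i$. Expanding the product $\prod_{i \in F}(T_v - P_i)$ as an alternating sum of the $\prod_{i \in S}P_i$ and comparing with the CNP sum gives
\begin{equation*}
\prod_{i \in F}\bigl(T_v - {\textstyle\sum_{\mu \in v\La^{\bo{i}}}} T_\mu T_\mu^*\bigr) = \sum_{\un{n} \leq \un{1}_F}(-1)^{|\un{n}|} \psi_{\un{n}}(\phi_{\un{n}}(\delta_v)),
\end{equation*}
establishing (i) $\Leftrightarrow$ (ii). This identity is the main technical step, and the only delicate point is verifying the iterated Nica-covariance formula for $\prod_{i \in S} P_i$ — which rests precisely on strong compact alignment so that the intermediate operators land in the appropriate compacts.

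For (i) $\Leftrightarrow$ (iii) I invoke Theorem \ref{T:CNP is CNP}, by which CNP-representations of $X(\La)$ coincide with Sims--Yeend CNP-representations; and by the higher rank graph portion of \cite{SY11} (comparison with \cite{RSY04}), these are precisely the Cuntz-Krieger $\La$-families. Combined with the previous paragraph we also get (ii) $\Leftrightarrow$ (iii) directly: indeed, given an $F$-tracing vertex $v$, the set $\S := \bigcup_{i \in F} v\La^{\bo{i}}$ is finite, and it is exhaustive in $v\La$ because for any $\la \in v\La$ either $\supp d(\la) \cap F \neq \emptyset$ (so $\la$ factors through some $\mu \in v\La^{\bo{i}}$ with $i \in F$) or $d(\la) \perp F$ (and then the $F$-tracing property at $s(\la)$ produces an edge $e \in s(\la)\La^{\bo{i}}$ with $i \in F$, yielding a minimal common extension of $\la$ with the degree-$\bo{i}$ initial segment of $\la e$); hence (CK) specializes to (CK'). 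Finally, under the hypotheses of the last assertion, $\{T_\la\}$ is a Cuntz-Krieger $\La$-family with $T_v \neq 0$ for all $v$ and a gauge action, so the GIUT \cite[Theorem 4.2]{RSY04} for $\ca(\La)$ delivers faithfulness. The only genuine obstacle in the whole argument is the algebraic identity in the first paragraph; the rest is bookkeeping once Nica-covariance is harnessed correctly.
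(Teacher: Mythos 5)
Your proof of (i) $\Leftrightarrow$ (ii) is essentially the paper's: the same alternating-sum identity
$\prod_{i\in F}\bigl(T_v - \sum_{\mu\in v\La^{\Bi}}T_\mu T_\mu^*\bigr) = \sum_{\un{n}\leq \un{1}_F}(-1)^{|\un{n}|}\psi_{\un{n}}(\phi_{\un{n}}(\delta_v))$,
proved by iterated Nica-covariance and unique factorization (the paper runs this through \cite[Lemma 2.7]{RSY04}), together with the observation that the $F$-tracing vertex projections densely span $\I_F$. Where you diverge is in how (iii) enters. You prove directly only the implication (iii) $\Rightarrow$ (ii), via finiteness and exhaustiveness of $\bigcup_{i\in F} v\La^{\Bi}$ — an argument that matches the paper's word for word — and you close the loop (ii) $\Rightarrow$ (iii) by passing through Theorem \ref{T:CNP is CNP} and the identification in \cite{SY11} of Sims--Yeend CNP-representations of $X(\La)$ with Cuntz--Krieger $\La$-families, then quoting \cite[Theorem 4.2]{RSY04} for the faithfulness statement. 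The paper instead stays internal: having shown that any Cuntz--Krieger family is a CNP-representation, it obtains a canonical $*$-epimorphism $\N\O(X(\La)) \to \ca(\La)$ that is gauge-equivariant and injective on $c_0(\La^{\un{0}})$, and applies its own GIUT (Theorem \ref{T:GIUT}) to conclude this map is an isomorphism, whence (ii) $\Rightarrow$ (iii) and the faithfulness consequence follow. The paper explicitly flags, just before the statement, that its proof does not depend on the GIUT of \cite{RSY04}; your route is correct but outsources precisely that content to \cite{SY11} and \cite{RSY04}, whereas the paper's route buys a self-contained re-derivation of the Raeburn--Sims--Yeend uniqueness theorem for strong finitely aligned graphs as a corollary of the general GIUT.
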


\begin{proof}

[(i) $\Leftrightarrow$ (ii)]:
Fix $\mt \neq F\subseteq [N]$ and let $v$ be a vertex such that $\delta_v \in \I_F$. 
Then, for any $\un{n} \leq \un{1}_F$ we have
\[
\psi_{\un{n}}(\phi_{\un{n}}(\delta_v)) 
= 
\sum \{ T_{\lambda}T_{\lambda}^* \mid \lambda \in v\La^{\un{n}} \}.
\]
As the $F$-tracing vertices span a dense subset of $\I_F$ it suffices to show the second equality in
\begin{align*}
\sum \{ (-1)^{|\un{n}|} \psi_{\un{n}}(\phi_{\un{n}}(\delta_v)) \mid \un{n} \leq \un{1}_F \}
& =
\sum \{ (-1)^{|d(\lambda)|} T_{\lambda}T_{\lambda}^* \mid \lambda \in v \La^{\un{1}_F} \}  \\
& = 
\prod \{T_v - T_{\mu}T_{\mu}^* \mid \mu \in v\La^{\Bi}, i \in F \}.
\end{align*}
To this end we shall use that $T_v - T_\mu T_\mu^* = T_v (I - T_\mu T_\mu^*)$ if $r(\mu) = v$ and that
\[
\prod \{T_v - T_{\mu}T_{\mu}^* \mid \mu \in v\La^{\Bi}, i \in F \} 
= 
\prod \{T_v - \sum_{\mu\in v\La^{\bo{i}}}T_{\mu}T_{\mu}^* \mid i \in F \}.
\]
Let $\mu \in v \La^{\un{1}_{H}}$ and $\nu \in v \La^{\un{1}_{H'}}$ for $H \neq H'$ subsets of $F$. 
By \cite[Lemma 2.7]{RSY04} we see that
\begin{align} \label{eq:multiply}
T_{\mu}T_{\mu}^*T_{\nu}T_{\nu}^* = \sum_{(\alpha,\beta) \in \La^{\min}(\mu,\nu)}T_{\mu\alpha}T_{\mu\alpha}^*.
\end{align}
Due to the unique factorization property, no element on the right hand side of equation \eqref{eq:multiply} will appear for distinct pairs $(\mu,\nu), (\mu',\nu') \in v \La^{\un{1}_H} \times v \La^{\un{1}_{H'}}$.
Hence a repeated application of equation \eqref{eq:multiply} yields
\[
\prod \{T_v - \sum_{\mu\in v\La^{\bo{i}}}T_{\mu}T_{\mu}^* \mid, i \in F \} 
= 
\sum \{ (-1)^{|d(\lambda)|} T_{\lambda}T_{\lambda}^* \mid \lambda \in v \La^{\un{1}_F} \}.
\]

\noindent
[(ii) $\Leftrightarrow$ (iii)]:
Let $\mt \neq F\subseteq [N]$ and let $v$ be an $F$-tracing vertex. 
First we show that the finite set $\{\mu \in v\La^{\Bi} \mid i \in F\}$ is exhaustive.
To this end let $\lambda \in v \La$ be some path. 
If $d(\lambda) \cap F \neq \emptyset$, then by unique factorization we may write $\lambda = \mu \nu$ where $\mu \in v\La^{\Bi}$ for some $i \in F$.
Trivially $(v, \nu)$ is a common extension of $\la$ and $\mu$ so that $\La^{\min}(\la, \mu) \neq \emptyset$.
Otherwise, if $d(\lambda) \cap F = \emptyset$, as $v$ is $F$-tracing, there exists some $i\in F$ such that $s(\lambda)\La^{\bo{i}} \neq \emptyset$.
Hence we may find $e \in \La^{\bo{i}}$ for which $\lambda e$ is a path in $v \La$.
Notice that $d(\la e) = d(\la) + \Bi$.
By unique factorization we can find a $\mu \in \La^{\bo{i}}$ such that $\la e = \mu \la'$ for some $\la' \in \La$.
Therefore there is a $\mu \in v\La^{\Bi}$ such that $\La^{\min}(\la, \mu) \neq \emptyset$.

Since $\{\mu \in v\La^{\Bi} \mid i \in F\}$ is finite and exhaustive, if $\{T_{\la}\}_{\la \in \La}$ is a Cuntz-Krieger $\La$-family then
\[
\prod \{T_v - T_{\mu}T_{\mu}^* \mid \mu \in v\La^{\Bi}, i \in F \} = 0,
\]
so that $(\pi,t)$ is a CNP-representation by the first part of the proof.
This shows that there exists a canonical $*$-epimorphism $\N\O(X) \rightarrow \ca(\La)$ that maps generators to generators, admits a gauge action and is injective on $c_0(\La^0)$.
Since $\N\O(X)$ satisfies a GIUT, this $*$-epimorphism must be injective.
Universality of the C*-algebras completes the proof.
\end{proof}

\begin{remark}
While the current paper was under submission, an interesting example of row-finite graphs was investigated by an Huef-Raeburn \cite{HR19}.
Therein they work out the (CK) relations of \cite{RSY03} on exhaustive sets to derive simpler relations that define the same Cuntz-Krieger algebra. Their main arguments use alternating sums as we have incorporated in this work, and in \cite[Example 1.8]{HR19} our (CK') relations can be used to obtain \cite[Corollary 1.11]{HR19} with a simpler analysis.
\end{remark}

%%%%%%%%%%%%%%%%%%%%%%%%%%%%%%%%
\section{A case study: higher rank factorial languages}\label{S:mfl}
%%%%%%%%%%%%%%%%%%%%%%%%%%%%%%%%

In this section we examine C*-algebras arising from factorial languages. 
Several elements pass through from the rank one case studied in \cite{BK17, KS15, LM95} and we will omit their proofs.
We require some notation.
The free semigroup $\bF_+^d$ on $d$ symbols admits the partial order
\[
\mu \geq \nu \qiff \mu = \nu w \text{ for some } w \in \bF_+^d.
\]
Consider the cartesian product $\fdn$ of $N$ free semigroups for fixed $N \in \bZ_+$.
We shall denote elements in $\fdn$ by $\umu, \unu$ etc., and we fix $\umt := (\mt, \dots, \mt)$.
In particular we shall write $\de_i(k)$ for the generator with the letter $k$ at the $i$-th coordinate.
The \emph{multilength} of $\umu = (\mu_1, \dots, \mu_N)$ is given by $|\umu| := (|\mu_1|, \dots, |\mu_N|)$.
We use $\ast$ for the coordinate-wise multiplication operation in $\fdn$, i.e.
\[
\umu \ast \unu = (\mu_1 \nu_1, \dots, \mu_N \nu_N).
\]
The semigroup $(\bF_+^d)^N$ inherits a partial ordering, given by
\[
\umu \geq \unu \qiff \mu_i \geq \nu_i \foral i \in [N].
\]
We write $\umu \vee \unu$ as the least upper bound for $\umu$ and $\unu$, when it exists.
We extend the definition of the support for elements in $\fdn$ in the sense that
\[
\supp \umu := \{i \in [N] \mid \mu_i \neq \mt\}
\]
and we write $\umu \perp \unu$ if and only if $\supp \umu \bigcap \supp \unu = \mt$.
We write $\unu \in \umu$ if $\umu = \uw \ast \unu \ast \uq$ for some $\uw, \uq \in \fdn$.

%%%%%%%%%%%%%%%%
\begin{definition}
A \emph{factorial language} (FL) $\La^*$ is a subset of $(\bF_+^d)^N$ such that:
\begin{enumerate}
\item for every $i \in [N]$ there exists at least one $k \in [d]$ such that $\de_i(k) \in \La^*$;
\item if $\umu \in \La^*$ and $\unu \in \umu$ then $\unu \in \La^*$.
\end{enumerate}
\end{definition}

%%%%%%%%%%%%%%%%%%%%%%%%%%%%%%%%
\begin{remark}
We allow cases where some $\de_i(k)$ are not in $\La^*$.
This is convenient for including cases of $\La^* \subseteq \bF_+^{d_1} \times \dots \times \bF_+^{d_N}$, for different $d_i$'s, under the one umbrella of $\fdn$ for $d = \max_i \{d_i\}$.
\end{remark}

Just as in the rank one case \cite{KS15}, a higher $N$-rank language $\La^*$ can be characterized by forbidden words. 
Indeed, when $\La^*$ is an FL, the \emph{whole} set of forbidden words $\fdn \setminus \La^*$ becomes a monoid in $\fdn$. 
On the other hand, let $\sca{\F}$ be the monoid in $\fdn$ generated by a subset $\F \subseteq \fdn$.
Then the set
\[
\La^*_\F:= \fdn \setminus \sca{\F}
\]
is a higher $N$-rank language. 
If $\F$ is finite then we say that $\La^*_\F$ is of \emph{finite type}.

Products of rank one FL's form higher rank FL's. 
In fact it is not hard to see that $\La^*$ is a product of one dimensional languages if and only if whenever $\umu, \unu \in \La^*$ are such that $\umu \perp \unu$, then $\umu * \unu \in \La^*$.
This follows by the fact that coordinate projections of FL's are rank one FL's.
Hence, if we forbid words $\umu = (\mu_1, \dots, \mu_N)$ that are supported on more than one coordinates, while allowing words of the form $\delta_i(\mu_i)$, we obtain FL's that are not products of lower dimensional languages. 
It is now clear that many such examples can be constructed.

%%%%%%%%%%%%%%%%%%%%%%%%%%%%%%%%
\begin{example}\label{E: doubledoors}
Let $\La^* := \{(0^n,0^m), (0^k 1 0^l, 0^m), (0^n, 0^k 10^l) \mid n,m,k,l \in \bZ_+\}$.
Then $\La^* = \La^*_\F$ for 
\[
\F=\{ \ (1,1), (10^n1, \mt), (\mt, 10^n1) \ | \  n\geq 0 \ \}.
\] 
If $\La^*$ were a product of two rank one languages, then by projecting to each coordinate and multiplying we would have that $(0^k 1 0^l, 0^{k'} 10^{l'}) \in \La^*$. 
This is a contradiction as the word contains $(1,1) \in \F$.
\end{example}

For $\un{n} \in \bZ_+^N$ we define the equivalence relation $\sim_{\un{n}}$ on $\La^*$ such that $\umu \sim_{\un{n}} \unu$ if and only if
\[
\{\un{w} \mid \un{w} \ast \umu \in \La^*, |\un{w}| \leq |\un{n}| \} = \{\un{w} \mid \un{w} \ast \unu \in \La^*, |\un{w}| \leq |\un{n}| \}.
\]
We set $\Om_{\un{n}} := \La^*/ \sim_{\un{n}}$, which is finite.
Using the connecting maps 
\[
\vartheta_{\un{n} + \un{m}, \un{n}} \colon \Om_{\un{n} + \un{m}} \to \Om_{\un{n}}: [\umu]_{\un{n} + \un{m}} \mapsto [\umu]_{\un{n}} 
\]
we define the projective limit
\[
\Om := \lim_{\longleftarrow} (\Om_{\un{n}}, \vartheta_{\un{n} + \un{m}, \un{n}})
\]
which is a totally disconnected space.
In fact $\Om \simeq \La^*/\sim$ for the equivalence $\sim$ given by
\[
\umu \sim \unu \qiff \{\un{w} \mid \un{w} \ast \umu \in \La^* \} = \{\un{w} \mid \un{w} \ast \unu \in \La^* \}.
\]
We denote by $[\umu]$ the equivalence class of $\umu$ under $\sim$.

%%%%%%%%%%%%%%%%%%%%%%%%%%%%%%%%
\begin{definition}
An FL $\La^*$ is called \emph{sofic} if and only if there exists an $\un{m} \in \bZ_+^N$ such that $\sim_{\un{n}}$ is equal to $\sim$ for all $\un{n} \geq \un{m}$.
Equivalently, if $\Om$ is finite. %, or if there are finitely many follower sets.
\end{definition}

This terminology is in accordance with the rank one case \cite{LM95}. 
Moreover we have the following proposition.

%%%%%%%%%%%%%%%%%%%%%%%%%%%%%%%%
\begin{proposition}
Every FL of finite type is sofic.
More precisely, if $\La^* = \La^*_{\F}$ for some finite set $\F$, then $\Om = \Om_{\un{n}}$ for all $\un{n} \geq \max \{|\umu| \mid \umu \in \F\}$.
\end{proposition}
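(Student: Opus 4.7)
I will establish the proposition by showing $\sim_{\un{n}}\,=\,\sim$ for $\un{n} \geq \un{M} := \max\{|\umu| : \umu \in \F\}$, which immediately gives $\Om = \Om_{\un{n}}$. The inclusion $\sim\,\subseteq\,\sim_{\un{n}}$ is trivial from the definitions. For the reverse inclusion, I plan to induct on coordinates: it suffices to show $\sim_{\un{n}}\,\subseteq\,\sim_{\un{n}+\Bi}$ for each $i \in [N]$, since iterating over all coordinates exhibits $\sim_{\un{n}}$ as contained in $\sim_{\un{m}}$ for every $\un{m} \geq \un{n}$, and hence in $\sim = \bigcap_{\un{m}}\sim_{\un{m}}$.

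So I would fix $\umu \sim_{\un{n}} \unu$, a coordinate $i$, and a test word $\un{w}$ with $|w_i| = n_i + 1$ and $|w_l| \leq n_l$ for $l \neq i$. Writing $w_i = aw_i''$ with $|w_i''| = n_i$ and letting $\un{w}''$ denote $\un{w}$ with the $i$-coordinate replaced by $w_i''$, the hypothesis gives $\un{w}''\ast\umu \in \La^* \iff \un{w}''\ast\unu \in \La^*$; the question reduces to whether prepending $a$ at coordinate $i$ produces the same new forbidden occurrences for $\umu$ and $\unu$. Any new forbidden word $\umu_0 \in \F$ must have $\mu_{0,i}$ entirely a prefix of $aw_i''$ (since $|\mu_{0,i}| \leq M_i \leq n_i + 1$), so the question reduces further to whether, for each $\umu_0$ in the finite set $\F_a := \{\umu_0 \in \F : \mu_{0,i}$ is a prefix of $aw_i''\}$ (which depends only on $\un{w}$, not on $\umu,\unu$), the cross-coordinate condition $C(\umu_0,\umu) := \bigwedge_{l \neq i}[\mu_{0,l} \in w_l\mu_l]$ is equivalent to $C(\umu_0,\unu)$. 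To probe $C(\umu_0,\umu)$, I would take the test word $\un{v}$ with $v_i := \mu_{0,i}$ and $v_l := w_l$ for $l \neq i$, noting $|\un{v}| \leq \un{n}$ since $|\mu_{0,i}| \leq M_i \leq n_i$; then $\umu_0 \in \un{v}\ast\umu$ iff $C(\umu_0,\umu)$, because $\mu_{0,i}$ is a prefix of $v_i\mu_i$. Applying $\sim_{\un{n}}$ transfers the $\La^*$-status of $\un{v}\ast\umu$ to $\un{v}\ast\unu$, from which the desired equivalence $C(\umu_0,\umu) \iff C(\umu_0,\unu)$ is extracted.

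The main obstacle, and where the $\un{n}\geq\un{M}$ hypothesis is essential, is that the forbidden word witnessing $\un{v}\ast\unu \notin \La^*$ need not be the specific $\umu_0$ we started with---it might be some $\umu_1 \in \F$ whose $i$-coordinate straddles the boundary between $\mu_{0,i}$ and $\nu_i$, giving no direct conclusion about $\umu_0$ itself. I would handle this by a descent argument: writing such a straddling component as $\mu_{1,i} = uv$ with $u$ a proper suffix of $\mu_{0,i}$ and $v$ a prefix of $\nu_i$, I introduce a finer probe with test coordinate $v_i := u$ (still of length $\leq |\mu_{0,i}| \leq n_i$) and iterate. Since $\F$ is finite and all forbidden words have length bounded by $\un{M}$, the suffixes $u$ form a well-founded descent and the procedure terminates, ultimately forcing $C(\umu_0,\unu)$ to match $C(\umu_0,\umu)$ and completing the inductive step.
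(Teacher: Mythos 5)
Your first two paragraphs are sound: the reduction to the single step $\sim_{\un{n}}\,\subseteq\,\sim_{\un{n}+\Bi}$, the observation that a new forbidden occurrence must come from a word $\umu_0$ whose $i$-th component is a nonempty prefix of $w_i$, and the identification of the cross-coordinate conditions $C(\umu_0,\cdot)$ are all correct (and cleaner than what the paper does). The gap is the final paragraph. When the probe $\un{v}\ast\unu$ is killed by a straddling word $\umu_1$ with $\mu_{1,i}=uv$, your refined probe with $i$-th coordinate $u$ is again killed by that same $\umu_1$, for $\umu$ and $\unu$ alike, so the iteration terminates without extracting any information about $\umu_0$; moreover $u$ need not be a \emph{proper} suffix of $\mu_{0,i}$, so the quantity you descend on need not decrease at all. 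In fact the equivalence $C(\umu_0,\umu)\Leftrightarrow C(\umu_0,\unu)$ that you are driving at is false under your hypotheses, and no repair is possible, because the quantitative assertion $\Om=\Om_{\un{n}}$ for $\un{n}\geq\un{M}$ is itself false. Take $N=2$, alphabet $\{0,1,2,3\}$ in each coordinate, $\F=\{(12,11),\,(20,10)\}$, so $\un{M}=(2,2)$, and set $\umu=(0,10)$, $\unu=(0,010)$ (both lie in $\La^*_{\F}$). For any test word $(v_1,v_2)$ with $|v_1|\leq 2$, $|v_2|\leq 2$ (the coordinatewise reading of the length restriction, which is the one you use), both $v_2 10$ and $v_2 010$ end in $10$, so the $(20,10)$-condition reduces to $20\in v_1 0$ for both; and $12\in v_1 0$ forces $v_1=12$, whence $20\in v_1 0$ as well. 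Hence each product is forbidden exactly when $20\in v_10$, and $\umu\sim_{(2,2)}\unu$. Yet for $\uw=(123,01)$ one has $\uw\ast\umu=(1230,0110)\notin\La^*$, since it contains $(12,11)$, while $\uw\ast\unu=(1230,01010)\in\La^*$; so $\umu\not\sim\unu$ and $\Om\neq\Om_{(2,2)}$. This example hits your argument exactly at the contested step: here $\F_a=\{(12,11)\}$, $C((12,11),\umu)$ holds, $C((12,11),\unu)$ fails, and the probe $\un{v}=(12,01)$ is ``saved'' for $\unu$ by the straddling occurrence of $(20,10)$ in $(120,01010)$.

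The underlying obstruction is that a forbidden word may occur in $\uw\ast\umu$ with different coordinates positioned independently (some deep inside $\uw$, some straddling the junction, some inside $\umu$), so a single short probe only reports that \emph{some} forbidden word occurs, and distinct forbidden words can mask one another. This is a genuinely higher-rank phenomenon: in rank one your probe $\un{v}$ is a suffix of $\uw$, so $\un{v}\ast\unu$ is a subword of $\uw\ast\unu$ and the contamination is harmless, which is why the analogous statement holds for $N=1$. Be aware that the paper's own proof (which truncates $\umu$ to its length-$\un{m}$ prefix rather than inducting on the test word) founders on the same entanglement, so the statement should only be relied upon in its qualitative form -- an FL of finite type is sofic, with $\Om=\Om_{\un{n}}$ for some sufficiently large $\un{n}$ determined by $\F$ -- rather than at the stated threshold $\un{n}\geq\un{M}$.
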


\begin{proof}
Set $\un{m} = \max\{ \ |\umu| \ | \  \umu\in \F \ \}$. Let $\umu \in \La^*$ such that $|\umu| \not\leq \un{m}$.
Without loss of generality we may assume that $\umu = (\mu_1, \dots, \mu_\ell, \mu_{\ell+1}, \dots, \mu_N)$ with $|\mu_i| > m_i$ for $i \in \{1, \dots, \ell\}$ and $|\mu_j| \leq m_j$ for $j \in \{\ell+1, \dots, N\}$.
Write $\umu = \umu' \ast \umu''$ such that $|\mu'_i| = m_i$ for $i \in\{1, \dots, \ell\}$ and $\mu_j' = \mu_j$ for $j \in \{\ell+1, \dots, N\}$, i.e.
\[
\umu' = (\mu_1', \dots, \mu_{\ell}', \mu_{\ell+1}, \dots, \mu_N) \qand \umu'' = (\mu_1'', \dots, \mu''_\ell, \mt, \dots, \mt).
\]
Then $|\umu'| \leq \un{m}$, and for every $\un{w} \in \La^*$ we claim that
\[
\un{w} \ast \umu' \in \La^* \qiff \un{w} \ast \umu \in \La^*.
\]
This shows that $\Om = \Om_{\un{m}}$ as it suffices to identify just the classes $[\umu]$ with $|\umu| \leq \un{m}$; consequently $\Om = \Om_{\un{n}}$ for any $\un{n} \geq \un{m}$.
To prove the claim, if $\un{w} \ast \umu \in \La^*$ then trivially $\un{w} \ast \umu' \in \La^*$.
Conversely suppose that $\un{w} \ast \umu' \in \La^*$ but $\un{w} \ast \umu \notin \La^*$.
Then there is a forbidden word $\unu$ in $\un{w} \ast \umu$ with $|\unu| \leq \un{m}$.
We write $\unu = \un{x} \ast \un{y}$ so that $\un{w} = \un{x}' \ast \un{x}$ and $\umu = \un{y} \ast \un{y}'$.
Hence $|y_i| \leq |\mu_i|$ and $|y_i| \leq |\nu_i| \leq m_i$ for all $i \in [N]$.
Since $\unu$ cannot be a subword of $\un{w} \ast \umu' \in \La^*$ there exists an $i \in \{1,...,\ell\}$ such that $y_i$ contains $\mu'_i$ as a subword.
Then we would have that $|y_i| > |\mu_i'| = m_i$,
so we have a contradiction. Therefore $\un{w} \ast \umu \in \La^*$.
\end{proof}

There is a way of constructing a sofic FL from a finite labeled higher rank graph $(\La,\fL)$.
Suppose we have partitioned its edge set $E = E_1 \cup \cdots \cup E_N$, and we have a labeling map $\fL \colon E \to \fdn$ such that $\fL(e_i) \in \{\de_i(k) \mid k \in [d]\}$.
We extend $\fL$ to a labeling from $E^{\bullet}$ to finite multi-words in $(\bF_+^d)^N$, which we still denote by $\fL$. 
Clearly $\fL$ respects the commutation imposed by $\La$ as a quotient of $E^{\bullet}$, and we get that $\fL$ is well-defined on $\La$. 
This way, $\fL(E^\bullet)$ naturally becomes an FL and satisfies $|\fL(\mu)| = d(\mu)$ for every path $\mu \in \La$. It goes without saying that different edges may carry the same label and colour.

We next construct a finite labeled higher rank graph $(\La,\fL)$ from a sofic FL $\La^*$ in $(\bF_+^d)^N$. We know that $\Omega$ is finite, and we may consider the finite colored graph $\La$ whose vertices are $\Omega$, and edges $E$ where $e$ is an edge with color $i$ and labeled $\fL(e):=\delta_i(k) \in (\bF_+^d)^N$ from $[\umu]$ to $[\delta_i(k) * \umu]$ if and only if $\delta_i(k) * \umu \in \La^*$. 
We denote $\sim$ the equivalence relation generated by $ef \sim f'e'$ if $\fL(e) \ast \fL(f) = \fL(f') \ast \fL(e')$. Then $\La = E^{\bullet} / \sim$ is a higher rank graph and $\fL$ extends to a well-defined labeling on $\La$.

Indeed if $ef$ is a path with source $[\umu]$ with colors $i\neq j$ and labels $\fL(e):= \delta_i(k)$ and $\fL(f):=\delta_j(\ell)$, then its range is $[\de_i(k) \ast \de_j(\ell) \ast \umu] = [\de_j(\ell) \ast \de_i(k) \ast \umu]$.
Therefore $\de_j(\ell) \ast \de_i(k) \ast \umu$ is also in $\La^*$, and as $i \neq j$ there exists a path $f'e'$ from $[\umu]$ to $[\de_j(\ell) \ast \de_i(k) \ast \umu]$ through $[\de_i(k) \ast \umu]$ so that $\fL(f') = \delta_j(\ell)$ and $\fL(e') = \delta_i(k)$.

%%%%%%%%%%%%%%%%%%%%%%%%%%%%%%%%
\begin{definition} \label{D:fsg}
Let $\La^*$ be an FL in $(\bF_+^d)^N$. Then the labeled higher rank graph $(\La,\fL)$ constructed above is called the \emph{follower set graph} of $\La^*$.
\end{definition}

An important property of the follower set graph is that it is \emph{source-resolving}. More precisely, if $\la,\mu \in \La$ are paths emanating from the same source $[\umu]$, then $\fL(\la) = \fL(\mu)$ implies that $\la = \mu$. Indeed, we may write $\la = \la_1 ... \la_N$ and $\mu = \mu_1 ... \mu_N$, where $\la_i , \mu_i$ are comprised only of edges of color $i$. So in order to show that $\la = \mu$, it will suffice to show that $\la_i = \mu_i$ for each $i\in [N]$ when $\fL(\la_i) = \fL(\mu_i)$. This can then be verified by induction on length, where we use the fact that for two edges $e,f$ of the same color $i$ and source $[\umu]$, we have that $\fL(e) = \fL(f)$ implies $e=f$.

%%%%%%%%%%%%%%%%%%%%%%%%%%%%%%%%
\begin{proposition}\label{P:fsg}
Let $\La^*$ be an FL in $(\bF_+^d)^N$. 
If $\La^*$ is sofic then it coincides with the labeled path space of its follower set graph $(\La,\fL)$.
Conversely if $\La^*$ coincides with the labeled path space of a finite labeled higher rank graph then it is sofic.
\end{proposition}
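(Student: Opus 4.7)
For the forward direction, assume $\La^*$ is sofic, so that $\Omega$ is finite and the follower set graph $(\La,\fL)$ is well-defined. First I would verify that the labeled path space $\fL(\La)$ is contained in $\La^*$. This is a bookkeeping argument by induction on path length: by construction each edge $e$ from $[\umu]$ to $[\delta_i(k)\ast \umu]$ exists precisely when $\delta_i(k)\ast \umu \in \La^*$, so reading a path $e_n \cdots e_1$ with source $[\umu]$ from right to left gives, at the $j$-th step, an element $\fL(e_j)\ast \cdots \ast \fL(e_1) \ast \umu \in \La^*$, and in particular its prefix $\fL(e_j)\ast \cdots \ast \fL(e_1)$ is in $\La^*$ by the factorial property. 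Taking $\umu=\umt$ (and noting that $[\umt]$ is a vertex of $\Omega$) shows every label of a path emanating from $[\umt]$ lies in $\La^*$.

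For the reverse inclusion, given $\unu \in \La^*$ I would construct a path in $\La$ with source $[\umt]$ and label $\unu$ by decomposing $\unu$ as a product $\delta_{i_n}(k_n) \ast \cdots \ast \delta_{i_1}(k_1)$ of single letters in any order compatible with its coordinate decomposition. Each prefix $\delta_{i_j}(k_j)\ast \cdots \ast \delta_{i_1}(k_1)$ lies in $\La^*$ by the factorial property, so the corresponding edges in the follower set graph exist and can be composed into a path of label $\unu$. Different decomposition orders yield different paths but identified through the relation $\sim$ on $E^\bullet$, and this is exactly what makes $\fL$ well-defined on $\La$.

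For the converse direction, assume $\La^* = \fL(\La')$ for a finite labeled higher rank graph $(\La',\fL')$. For each $\umu \in \La^*$, I would define
\[
V(\umu) := \{ r(\rho) \mid \rho \in \La',\ \fL'(\rho) = \umu \} \subseteq \La'^{\un{0}}.
\]
Using that $\un{w}\ast \umu \in \La^*$ if and only if there is a path $\rho'\rho \in \La'$ with $\fL'(\rho)=\umu$ and $\fL'(\rho')=\un{w}$, which is equivalent to $s(\rho') \in V(\umu)$, I would show the explicit formula
\[
\{\un{w} \mid \un{w}\ast \umu \in \La^*\} = \{ \fL'(\rho') \mid \rho' \in \La',\ s(\rho') \in V(\umu) \}.
\]
In particular the follower set of $\umu$ depends only on $V(\umu)$, so the map $[\umu] \mapsto V(\umu)$ is injective from $\Omega$ into the finite power set of $\La'^{\un{0}}$, whence $\Omega$ is finite and $\La^*$ is sofic.

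The main (and only mild) obstacle is the first direction, where one must be careful about the right-to-left composition convention and the fact that the identification $ef \sim f'e'$ of differently-coloured edges is what makes $\fL$ descend to $\La$; once this is respected, the two inclusions are straightforward inductions. The converse direction is essentially a pigeonhole argument on the finite vertex set of $\La'$.
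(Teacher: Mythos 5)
Your proposal is correct and follows essentially the same route as the paper: the forward direction is the same pair of inclusions (edges exist exactly when appending a letter stays in $\La^*$, plus the factorial property for prefixes), and the converse is the same pigeonhole argument bounding $|\Om|$ by $2^{|V|}$ via the set of ranges of paths carrying a given label (your $V(\umu)$ is the paper's $C_{\umu}$). If anything, your converse is slightly more careful than the paper's, since you record the full equality of the follower set with $\{\fL'(\rho') \mid s(\rho') \in V(\umu)\}$ rather than only one containment.
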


\begin{proof}
First suppose that  $\La^*$ is sofic and let $(\La,\fL)$ be its follower set graph of $\La^*$. 
We need to show that $\La^*$ coincides with the labeled path space $\fL(\La)$.
It is clear that if $\umu \in \La^*$ then it defines a path $x$ from $[\mt]$ to $[\umu]$ so that $\fL(x) = \umu$.
For the other inclusion, let $x \in \La$.
If $s(x) = [\unu]$ then by construction we get that $r(x) = [\fL(x) \ast \unu]$.
Hence $\fL(x) \ast \unu \in \La^*$ and so we obtain $\fL(x) \in \La^*$. 

Suppose now that $\La^*$ comes from a labeled higher $N$-rank graph $(\La,\fL)$ with finite vertices.
We show that $\Om = \La^*/\sim$ is finite.
To this end let $\umu \in \La^* = \fL(E^\bullet)$ and take all paths representing $\umu$.
Each one can be uniquely written as $x = x_1 \cdots x_N$ with $x_i$ composed only of edges from $E_i$.
Let
\[
C_{\umu} := \{r(x) \mid \fL(x) = \umu \}.
\]
Then $\{\un{w} \mid \un{w} \ast \umu \in \La^*\}$ contains all the labeled paths starting at some vertex in $C_{\umu}$.
Therefore if $[\umu] \neq [\unu]$ then $C_{\umu} \neq C_{\unu}$.
Thus the cardinality of $\Om$ is at most $2^{|V|}$, and hence finite.
\end{proof}

%%%%%%%%%%%%%%%%%%%%%%%%%%%%%%%%
\begin{example}
Referring to the FL $\La^*$ in Example \ref{E: doubledoors} given by
$$
\La^* = \{(0^n,0^m), (0^k 1 0^l, 0^m), (0^n, 0^k 10^l) \mid n,m,k,l \in \bZ_+\},
$$ 
the follower set graph is given in Figure 3.
\begin{figure}[H]
\begin{align*}
\xymatrix@C=2cm@R=.5cm{
{}^{[(\mt,1)]}
\ar@{-2>}^<<<<<{(0, \mt)}@(l,u) 
\ar@{.2>}^<<<<<{(\mt, 0)}@(r,d) 
& & 
{}^{[(\mt,\mt)]}
\ar@{-2>}^<<<<<{(0,\mt)}@(l,u) 
\ar@{.2>}^<<<<<{(\mt,0)}@(r,d) 
\ar@{-2>}_>>>>>{(1,\mt) \hspace{2cm}}@/_.1pc/@(u,dl)[rr] 
\ar@{.2>}_>>>>>{\hspace{2cm} (\mt, 1)}@/_.1pc/@(d,ur)[ll]
& & 
{}^{[(1,\mt)]}
\ar@{-2>}^<<<<<{(0,\mt)}@(l,u) 
\ar@{.2>}^<<<<<{(\mt,0)}@(r,d) \\
& & & &
}
\end{align*}
\vspace{1pt}
\begin{center}
{\footnotesize Figure 3. Follower set graph for Example \ref{E: doubledoors}.}
\end{center}
\end{figure}
\end{example}

%%%%%%%%%%%%%%%%%%%%%%%%%%%%%%%%
\subsection{Higher rank subshifts}
%%%%%%%%%%%%%%%%%%%%%%%%%%%%%%%%

Next, we describe FL's that come from subshifts. 
This follows similarly to the rank one case \cite{LM95}, and we omit the details. 
In short, fix the symbol set $\Si=\{1, \dots, d\}$ and create the full $N$-rank shift
\[
\A_N := \prod_{l \in [N]} \Si^\bZ = \Si^{\bZ} \times \cdots \times \Si^{\bZ}
\]
as the product of the full rank one shifts.
For $\un{x} \in \A_N$ we write $x_i \in \Si^\bZ$ for the $i$-th coordinate and define the higher rank block
\[
\un{x}_{[\un{n}, \un{m}]} := (x_{1, [n_1, m_1]}, \dots, x_{N, [n_N, m_N]}) \in \fdn
\]
whenever $\un{n} \leq \un{m}$.
A word $\un{w} \in \fdn$ \emph{appears} in $\un{x} \in \A_N$ if $w_i$ appears in $x_i$ for every $i \in [N]$. 

%%%%%%%%%%%%%%%%%%%%%%%%%%%%%%%%
\begin{definition}
Let $\F$ be a set of words in $\fdn$.
The \emph{subshift} $X_\F$ is defined by
\[
X_\F := \{\un{x} \in \A_N \mid \textup{ no $\un{w} \in \F$ appears in $\un{x}$}\}.
\]
We write 
\[
\B(X_\F) := \{ \un{w} \in \fdn \mid \exists \, \un{x} \in X_\F \textup{ such that $\un{w}$ appears in $\un{w}$} \}
\]
for the \emph{language} of a subshift $X_F$.
\end{definition}

When $X_{\F} \neq \emptyset$ it is clear that $\B(X_\F)$ is an FL that satisfies the property: for all $i\in [N]$ and $\umu \in \B(X_\F)$ there are $\delta_i(k),\delta_i(\ell) \in \B(X_\F)$ such that $\delta_i(k) * \umu * \delta_i(\ell) \in \B(X_F)$.
Conversely, every $\F \subseteq \fdn$ defines an FL $\La^*_\F$.
Then $\La^*_\F$ is the language of $X_\F$ if and only if $\La^*_\F$ satisfies the aforementioned property.

%%%%%%%%%%%%%%%%%%%%%%%%%%%%%%%%
\begin{example}
Example \ref{E: doubledoors} is an FL of a higher rank subshift. 
Indeed, we may always append $(0,\mt)$ or $(\mt, 0)$ on either side of a word.
\end{example}

As in the rank one case, there exists an equivalent characterization of higher rank subshifts.
Recall that $\Si^\bZ$ is a compact metrizable space and thus so is $\A_N$.
We use the left shift $\si$ on $\Si^\bZ$ to define shifts on $\A_N$ along any of the directions, i.e.
\[
(\si_{\Bi}(\un{x}))_j :=
\begin{cases}
\si(x_i) & \textup{ if } j=i,\\
x_i & \textup{ if } j \neq i.
\end{cases}
\]
We will call the collection $\si:=\{\si_{\bo{1}}, \dots, \si_{\bo{N}}\}$ the \emph{full multi-shift} on $\A_N$.
It can be shown that $X \subseteq \A_N$ is a subshift if and only if $X$ is compact and $X = \si_{\Bi}(X)$ for all $i \in [N]$. The proof follows the same lines as in \cite[Theorem 6.1.21]{LM95}. Hence, we will often just write $X$ for a subshift without always specifying the underlying set of forbidden words $\F$.

%%%%%%%%%%%%%%%%%%%%%%%%%%%%%%%%
\subsection{Encoding languages}
%%%%%%%%%%%%%%%%%%%%%%%%%%%%%%%%

Our next goal is to prescribe operators that encode an FL, given by appending words. 
This is the multivariable analogue of what is done in \cite{Mat97, SS09, KS15}. 
For $\umu \in \fdn$ let the operator $T_{\umu} \in \B(\ell^2(\La^*))$ be given by
\[
T_{\umu} e_{\uw} 
:= 
\begin{cases}
e_{\umu \ast \uw} & \text{ if } \umu \ast \uw \in \La^*, \\
0 & \text{ otherwise}.
\end{cases}
\]
The mapping $T \colon \fdn \to \B(\ell^2(\La^*))$ defines a semigroup homomorphism such that $T_{\umu} = 0$ if and only if $\umu \notin \La^*$.
In particular every $T_{\umu}$ is a partial isometry with $T_{\umu}^*T_{\umu}$ and $T_{\umu}T_{\umu}^*$ the projections onto
\[
\ol{\spn}\{ e_{\uw} \mid \umu \ast \uw \in \La^*\}
\qand
\ol{\spn}\{ e_{\uw} \mid \umu \leq \uw, \uw \in \La^*\} \ \ \ \text{respectively}.
\]
Consequently every $T_{\umu}^* T_{\umu}$ commutes with every $T_{\unu}^* T_{\unu}$ and every $T_{\unu} T_{\unu}^*$.
In fact $T \colon \La^* \to \B(\ell^2(\La^*))$ is \emph{Nica-covariant} in the sense that
\begin{equation*}
T_{\umu} T_{\umu}^* T_{\unu} T_{\unu}^*
=
\begin{cases}
T_{\umu \vee \unu} T_{\umu \vee \unu}^* & \text{ if } \umu \vee \unu \in \La^*, \\
0 & \text{ otherwise}.
\end{cases}
\end{equation*}
Although $\La^*$ is not a semigroup, the relation above resembles the Nica-covariance of \cite{Nic92}, whenever multiplication is still within $\La^*$.
It is clear that the partial isometries $T_{\de_i(k)}$ and $T_{\de_j(\ell)}$ may fail to have orthogonal ranges when $i \neq j$, as $\de_i(k) \vee \de_j(\ell) = \de_i(k) \ast \de_j(\ell)$ may be in $\La^*$.

%%%%%%%%%%%%%%%%%%%%%%%%%%%%%%%%
\begin{definition}
Let $\La^*$ be an FL.
Then its \emph{C*-algebra of checkers} $A$ is the unital commutative AF-algebra generated by the projections $T_{\umu}^* T_{\umu}$ with $\umu \in \La^*$, i.e., $A = \ol{\bigcup_{\un{n}} A_{\un{n}}}$ for
\[
A_{\un{n}} := \ca(T_{\umu}^* T_{\umu} \mid |\umu| \leq \un{n}).
\]
\end{definition}

Since every $T_{\de_i(k)} T_{\de_i(k)}^*$ commutes with $A$ we can define the (non-unital) $*$-endomor\-phisms of $A$
\[
\al_{\de_i(k)} \colon A \to A: a \mapsto T_{\de_i(k)}^* a T_{\de_i(k)}
\]
By iterating we obtain a partial anti-homo\-morphism $\al \colon \La^* \to \End(A)$, so that the identity $a T_{\umu} = T_{\umu} \al_{\umu}(a)$ is satisfied for all $a \in A$ and $\umu \in \La^*$. 
We call this partial anti-homo\-morphism $\al \colon \La^* \to \End(A)$ the \emph{quantized dynamics} of $\La^*$. 

%%%%%%%%%%%%%%%%%%%%%%%%%%%%%%%%
\begin{proposition}
Let $\La^*$ be an FL, and let $\Om = \La^* / \sim$ and $\Om_{\un{n}} = \La^*/\sim_{\un{n}}$ for $\un{n} \in \bZ_+^N$. Then $A_{\un{n}} \simeq C(\Om_{\un{n}})$ for every $\un{n} \in \bZ_+^N$, so that $A \simeq C(\Om)$. 
Consequently, we have that
\begin{enumerate}
\item
$\La^*$ is sofic if and only if there is an $\un{m} \in \bZ_+^N$ such that $A = A_{\un{n}}$ for all $\un{n} \geq \un{m}$.
\item
Every $\al_{\un{w}} \in \End(A)$ induces a partially defined map $\varphi_{\un{w}}$ on $\Om$ by $\varphi_{\un{w}}([\umu]) = [\un{w} \ast \umu]$, with domain $\{[\umu] \mid \un{w} \ast \umu \in \La^*\}$.
\end{enumerate}
\end{proposition}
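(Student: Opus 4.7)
The plan is to identify $A_{\un{n}}$ with $C(\Om_{\un{n}})$ via Gelfand duality, exploiting the explicit representation of $A_{\un{n}}$ on $\ell^2(\La^*)$. The key observation is that every basis vector $e_{\uw}$ is a joint eigenvector of the generating projections, with $T_\umu^* T_\umu \, e_{\uw} = e_{\uw}$ if $\umu \ast \uw \in \La^*$ and $0$ otherwise. The resulting tuple of eigenvalues over the set $\{\umu : |\umu| \leq \un{n}\}$ depends only on the $\sim_{\un{n}}$-class of $\uw$ by the very definition of $\sim_{\un{n}}$, and distinct classes produce distinct tuples for the same reason. Hence the assignment $\Phi_{\un{n}}(T_\umu^* T_\umu) := \chi_{S_{\umu, \un{n}}}$, with $S_{\umu, \un{n}} := \{[\uw]_{\un{n}} \mid \umu \ast \uw \in \La^*\}$, extends to a faithful $*$-homomorphism $A_{\un{n}} \to C(\Om_{\un{n}})$.

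The main technical step is surjectivity of $\Phi_{\un{n}}$. For each class $[\uw_0]_{\un{n}} \in \Om_{\un{n}}$ and every other class $[\uw']_{\un{n}}$, there exists some $\umu$ with $|\umu|\leq \un{n}$ that separates them in the sense of $\sim_{\un{n}}$. Forming the appropriate finite product of the corresponding $T_\umu^* T_\umu$ or $I - T_\umu^* T_\umu$ yields the singleton characteristic function $\chi_{\{[\uw_0]_{\un{n}}\}}$ inside $\Phi_{\un{n}}(A_{\un{n}})$, which suffices since $C(\Om_{\un{n}})$ is finite-dimensional. The family $\{\Phi_{\un{n}}\}$ is compatible with the inclusions $A_{\un{n}} \hookrightarrow A_{\un{n}+\un{k}}$ and the pullbacks $\vartheta_{\un{n}+\un{k},\un{n}}^* \colon C(\Om_{\un{n}}) \to C(\Om_{\un{n}+\un{k}})$, so passing to the inductive limit gives $A \simeq \varinjlim C(\Om_{\un{n}}) \simeq C(\varprojlim \Om_{\un{n}}) = C(\Om)$.

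For item (i), if $\La^*$ is sofic then $\Om_{\un{n}} = \Om$ for all $\un{n}\geq \un{m}$ and hence $A_{\un{n}} = A$; conversely $A = A_{\un{n}}$ forces $\vartheta_{\un{n}+\un{k},\un{n}}$ to be bijective for every $\un{k}$, i.e., $\sim_{\un{n}} = \sim$. For item (ii), a direct computation gives $\al_{\un{w}}(T_\unu^* T_\unu) = T_{\unu \ast \un{w}}^* T_{\unu \ast \un{w}}$, whose image under $\Phi$ is the characteristic function of $\{[\uq] \mid \unu \ast \un{w} \ast \uq \in \La^*\}$. On the other hand, I would verify that $\varphi_{\un{w}}$ is well defined: if $\uq_1 \sim \uq_2$ then taking $\un{w}$ as a test word shows $\un{w} \ast \uq_1 \in \La^*$ iff $\un{w} \ast \uq_2 \in \La^*$, and for any further test word $\uq$ the condition $\uq \ast \un{w} \ast \uq_i \in \La^*$ depends only on $\uq_i$ modulo $\sim$, so $\un{w} \ast \uq_1 \sim \un{w} \ast \uq_2$. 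The two characteristic functions then match as the pullback by $\varphi_{\un{w}}$ extended by zero off its clopen domain, which establishes (ii).
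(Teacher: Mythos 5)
Your proof is correct and follows essentially the same route as the paper: the paper also identifies $A_{\un{n}}$ with $C(\Om_{\un{n}})$ by matching its minimal projections (products of the $T_{\umu}^*T_{\umu}$ and their complements for $|\umu|\le\un{n}$) with characteristic functions of the $\sim_{\un{n}}$-classes, which is the dual formulation of your joint-eigenvalue argument on the basis $\{e_{\uw}\}$, and then checks compatibility with the connecting maps to pass to the limit. Your explicit verifications of (i) and (ii) are fine; the paper leaves these as immediate consequences.
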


\begin{proof}
Suppose that $\{\umu_1, \dots, \umu_M\}$ is the set of words in $\La^*$ with length less than or equal to $\un{n}$. Then $A_{\un{n}}$ is the linear span of its minimal projections, given by
\[
\prod_{i =1}^r T_{\umu_{\si(i)}}^* T_{\umu_{\si(i)}} \cdot \prod_{j= r+1}^{M} (I - T_{\umu_{\si(j)}}^* T_{\umu_{\si(j)}})
\]
for permutations $\si \in S_M$ and $r \in \{1, \dots, M\}$.
A partition of $\{\umu_1, \dots, \umu_M\}$ into two disjoint sets $\{\umu_{\si(1)}, \dots, \umu_{\si(r)}\}$ and $\{\umu_{\si(r+1)}, \dots, \umu_{\si(M)}\}$ defines a unique $[\unu]_{\un{n}} = \{\mu_{\sigma(1)},...,\mu_{\sigma(r)}\}$.
Then the map $\varphi_{\un{n}}$ sending a minimal projection to the characteristic function on this $[\unu]_{\un{n}}$ gives the required $*$-isomorphism from $A_{\un{n}}$ onto  $C(\Om_{\un{n}})$.
For the second part, let $\ga_{\un{n}, \un{n} + \un{m}} \colon C(\Om_{\un{n}}) \to C(\Om_{\un{n} + \un{m}})$ be the induced map from $\vartheta_{\un{n} + \un{m}, \un{n}} \colon \Om_{\un{n} + \un{m}} \to \Om_{\un{n}}$.
We have to show that the diagram
\[
\xymatrix{
A_{\un{n}} \ar[rr]^{\id} \ar[d]^{\varphi_{\un{n}}} & & A_{\un{n} + \un{m}} \ar[d]^{\varphi_{\un{n} + \un{m}}} \\
C(\Om_{\un{n}}) \ar[rr]^{\ga_{\un{n}, \un{n} + \un{m}}} & & C(\Om_{\un{n} + \un{m}})
}
\]
is commutative.
On the one hand notice that $\ga_{\un{n}, \un{n} + \un{m}}$ maps $\chi_{[\unu]_{\un{n}}}$ to the characteristic function on the set $\{[\un{w}]_{\un{n} + \un{m}} \mid [\un{w}]_{\un{n}} = [\unu]_{\un{n}} \}$. 
Suppose without loss of generality that $\sigma = \id$ so that
\[
\varphi_{\un{n}}^{-1}(\chi_{[\unu]_{\un{n}}}) = \prod_{i =1}^r T_{\umu_{i}}^* T_{\umu_{i}} \cdot \prod_{j= r+1}^{M} (I - T_{\umu_{j}}^* T_{\umu_{j}}) =: a.
\]
Then $a$ as an element in $A_{\un{n} + \un{m}}$ is written as a linear sum of minimal projections over words of length $\un{n} + \un{m}$.
Suppose that $\{\umu'_1, \dots, \umu'_{M'}\}$ is this set.
Then the minimal projections in this linear combination of $a$ must satisfy $T_{\umu'_s}^* T_{\umu_s'} \leq T_{\umu_i}^* T_{\umu_i}$ for all $i=1, \dots, r$ and $T_{\umu'_s}^* T_{\umu_s} \geq T_{\umu_j}^* T_{\umu_j}$ for all $j= r + 1, \dots, M$.
This shows that the element $a$ is written as the sum of the characteristic functions on $[\un{w}]_{\un{n} + \un{m}}$ with $[\un{w}]_{\un{n}} = [\unu]_{\un{n}}$.
\end{proof}

%%%%%%%%%%%%%%%%%%%%%%%%%%%%%%%%
\begin{remark}\label{R:fsg}
In the sofic case, the duality between $(\Om, \varphi)$ and $(A,\al)$ is given from the follower set graph.
First of all there is a bijection between the minimal projections of $A$ and the vertices of the follower set graph $(\La,\fL)$.
Hence, if $p_v$ corresponds to a vertex $v$ in $\La$ and $\mu \in \La$ is a path in $(\La,\fL)$, then
\[
\al_{\fL(\mu)}(p_v)
=
\sumoplus \{ p_{s(\nu)} \mid r(\nu) = v \AND \fL(\nu) = \fL(\mu)\}.
\]
Moreover $T_{\umu}^* T_{\umu}$ is the sum of all the minimal projections $p_v$ such that $v = s(\mu)$ for any $\mu$ satisfying $\fL(\mu) = \umu$.
\end{remark}

%%%%%%%%%%%%%%%%%%%%%%%%%%%%%%%%
\subsection{Product system construction}
%%%%%%%%%%%%%%%%%%%%%%%%%%%%%%%%

By using the Nica-covariance of $T$ and the algebra of checkers $A$ we can write
\[
\ca(T) = \ol{\spn} \{ T_{\umu} \, a \, T_{\unu}^* \mid  a \in A, \umu, \unu \in \La^*\}.
\]
The use of $A$ is necessary so as to obtain this special form of $\ca(T)$. 
More precisely, the linear span of the monomials $T_{\umu} T_{\unu}^*$ is not large enough to form an algebra that densely spans $\ca(T)$. 
Nevertheless this form of $\ca(T)$ allows us to identify a product system related to $\La^*$. 
For every $i \in [N]$ we define concretely the C*-correspondence
\[
X_{\Bi}(\La^*) := \ol{\spn} \{ T_{\de_i(k)} a \mid a \in A, \de_i(k) \in \La^*, k \in [d]\}
\]
with the inner product and bimobule structure given by
\[
\sca{\xi, \eta} := \xi^* \eta \qand \phi_{\Bi}(a) \xi \cdot b := a \xi b
\quad
\foral \xi ,\eta \in X_{\Bi}(\La^*), a, b \in A.
\]
Since all $X_{\Bi}(\La^*)$ are concretely defined in $\B(\ell^2(\La^*))$ we derive
\[
X_{\Bi}(\La^*) \otimes_A X_{\bo{j}}(\La^*) \cong \ol{[X_{\Bi}(\La^*) \cdot X_{\bo{j}}(\La^*)]}.
\]
However we have that
\[
T_{\de_i(k)} a T_{\de_j(l)} b 
= 
T_{\de_j(l)} T_{\de_i(k)} \al_{\delta_j(l)}(a) b 
\in \ol{[X_{\bo{j}}(\La^*) \cdot X_{\bo{i}}(\La^*)]}.
\]
Since $A$ is unital, and by symmetry, we get $\ol{[X_{\Bi}(\La^*) \cdot X_{\bo{j}}(\La^*)]} = \ol{[X_{\bo{j}}(\La^*) \cdot X_{\bo{i}}(\La^*)]}$, which coincides with the closed linear span of $\{ T_{\de_i(k) * \de_j(l)} a \mid a \in A\}$. 
The latter follows from the equality
\[
T_{\de_i(k) * \de_j(l)} a = T_{\de_i(k)}T_{\de_i(k)}^*T_{\de_i(k)} T_{\de_j(l)} a = T_{\de_i(k)} 1_A T_{\de_j(l)} a \in X_{\bo{i}}(\La^*) \cdot X_{\bo{j}}(\La^*),
\]
as $A$ is unital and $T_{\de_i(k)}$ is a partial isometry. 
By iterating, it is easy to verify the following proposition.

%%%%%%%%%%%%%%%%%%%%%%%%%%%%%%%%
\begin{proposition}
Let $\La^*$ be an FL in $\fdn$.
Then the family
\[
X_{\un{n}}(\La^*) : =  \ol{\spn} \{ T_{\umu} a \mid a \in A, \umu \in \La^*, |\umu| = \un{n}\}
\]
defines a product system $X(\La^*)$ with the unitary equivalences given by
\[
X_{\un{n}}(\La^*) \otimes_A X_{\un{m}}(\La^*) \cong \ol{[X_{\un{n}}(\La^*) \cdot X_{\un{m}}(\La^*)]} .
\]
Moreover the equation $V_{\umu} = t(T_{\umu})$ defines a bijection between the Nica-covariant representations $(\pi,t)$ of $X(\La^*)$ and pairs $(\pi,V)$ such that:
\begin{enumerate}
\item $V \colon \La^* \to \B(H)$ is a Nica-covariant representation for $\La^*$, in the sense that
\[
V_{\umu} V_{\umu}^* V_{\unu} V_{\unu}^*
=
\begin{cases}
V_{\umu \vee \unu} V_{\umu \vee \unu}^* & \text{ if } \umu \vee \unu \in \La^*, \\
0 & \text{ otherwise},
\end{cases}
\]
\item $\pi(a) V_{\de_i(k)} = V_{\de_i(k)} \pi \al_{\de_i(k)}(a)$ for all $(i, k) \in [N] \times [d]$ and $a \in A$,
\item $V_{\de_i(k)}^* V_{\de_i(l)} = \de_{k, l} \pi(T_{\de_i(k)}^* T_{\de_i(k)})$ for all $i \in [N]$ and $k, l \in [d]$.
\end{enumerate}
\end{proposition}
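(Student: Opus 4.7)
The plan is to break the statement into three blocks and address them in order: (a) verifying that each $X_{\un{n}}(\La^*)$ is an honest C*-correspondence over $A$, (b) constructing the product system multiplication through the concrete operator product, and (c) establishing the bijection of Nica-covariant representations.

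For (a), I would first verify that the concrete action on $\ell^2(\La^*)$ yields $T_{\umu}^* T_{\unu} = 0$ whenever $|\umu| = |\unu|$ but $\umu \neq \unu$, and $T_{\umu}^* T_{\umu} \in A$ by definition of the checkers algebra; consequently $\sca{T_{\umu} a, T_{\unu} b} := a^* T_{\umu}^* T_{\unu} b$ defines an $A$-valued inner product on $X_{\un{n}}(\La^*)$. The right action is ordinary right multiplication by $A$ inside $\B(\ell^2(\La^*))$, and the left action $\phi_{\un{n}}(a) \cdot T_{\umu} b := a T_{\umu} b$ is also inherited from $\B(\ell^2(\La^*))$; compatibility of the two actions with the inner product is automatic from the associativity of operator multiplication.

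For (b), the key observation, already used in the text for the case $\un{n} = \Bi$, $\un{m} = \bo{j}$, is that $T_{\umu} a \cdot T_{\unu} b = T_{\umu} T_{\unu} \al_{\unu}(a) b = T_{\umu \ast \unu} \al_{\unu}(a) b$, landing in $X_{\un{n} + \un{m}}(\La^*)$. Since $A$ is unital, any $T_{\un{w}} b \in X_{\un{n} + \un{m}}(\La^*)$ with $\un{w} = \umu \ast \unu$ lies in the concrete product $X_{\un{n}}(\La^*) \cdot X_{\un{m}}(\La^*)$, so the map $T_{\umu} a \otimes T_{\unu} b \mapsto T_{\umu \ast \unu} \al_{\unu}(a) b$ extends to a unitary equivalence $X_{\un{n}}(\La^*) \otimes_A X_{\un{m}}(\La^*) \to X_{\un{n} + \un{m}}(\La^*)$; this can be checked on generators by computing inner products and confirming that $A$-balancing is respected via the relation $a T_{\unu} = T_{\unu} \al_{\unu}(a)$. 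Associativity of the resulting equivalences follows for free from the associativity of the operator product in $\B(\ell^2(\La^*))$.

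For (c), the forward direction is direct: given a Nica-covariant $(\pi, t)$, set $V_{\umu} := t_{|\umu|}(T_{\umu})$. Multiplicativity $V_{\umu \ast \unu} = V_{\umu} V_{\unu}$ is the associative rule for the product system; condition (ii) restates the bimodule property of $(\pi, t_{\Bi})$ on generators; condition (iii) restates inner-product preservation using the identity $T_{\de_i(k)}^* T_{\de_i(l)} = \de_{k,l} T_{\de_i(k)}^* T_{\de_i(k)}$; and condition (i) is precisely Nica covariance transferred via $\psi_{|\umu|}(T_{\umu} T_{\umu}^*) = V_{\umu} V_{\umu}^*$. For the reverse direction, define $t_{\un{n}}(T_{\umu} a) := V_{\umu} \pi(a)$ and extend by linearity and continuity. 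Well-definedness is the crux: if $\sum_i T_{\umu_i} a_i = 0$ in $X_{\un{n}}(\La^*)$ with distinct $\umu_i$'s, then applying $T_{\umu_j}^*$ and using orthogonality yields $T_{\umu_j}^* T_{\umu_j} a_j = 0$; invoking $V_{\umu_j}^* V_{\umu_j} = \pi(T_{\umu_j}^* T_{\umu_j})$ then gives $V_{\umu_j} \pi(a_j) = V_{\umu_j} V_{\umu_j}^* V_{\umu_j} \pi(a_j) = V_{\umu_j} \pi(T_{\umu_j}^* T_{\umu_j} a_j) = 0$. Inner-product preservation, the associative rule, and Nica covariance of the resulting $(\pi, t)$ reduce to the three conditions on $(\pi, V)$.

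The main obstacle will be propagating condition (iii) from generators to arbitrary equal-length words, i.e., proving $V_{\umu}^* V_{\unu} = \de_{\umu, \unu} \pi(T_{\umu}^* T_{\umu})$ whenever $|\umu| = |\unu|$. This is handled by an inductive argument on $|\umu|$: within a single colour one uses multiplicativity of $V$ together with (iii) at generators; across different colours one invokes the Nica-covariance (i) to rewrite mixed-colour products $V_{\de_i(k)}^* V_{\de_j(l)}$ via the projections $V_{\de_i(k) \ast \de_j(l)} V_{\de_i(k) \ast \de_j(l)}^*$, using (ii) to shuttle $\pi$-terms through the $V$-terms at each step. Once this extension is in place, the rest of the verification is routine.
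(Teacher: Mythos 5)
Your proposal is correct and fills in exactly the argument the paper leaves implicit (``by iterating''): the concrete realization in $\B(\ell^2(\La^*))$, the identity $T_{\umu}a\,T_{\unu}b = T_{\umu\ast\unu}\al_{\unu}(a)b$ together with unitality of $A$ for the tensor identifications, and the extension of (i)--(iii) from generators for the bijection. One refinement: for the key lemma $V_{\umu}^*V_{\unu} = \de_{\umu,\unu}\,\pi(T_{\umu}^*T_{\umu})$ on words of equal multilength, Nica covariance (i) is not actually needed (and would not directly control the cross terms $V_{\de_i(k)}^*V_{\de_j(l)}$, since it only governs products of range projections); instead factor $\umu$ and $\unu$ colour-by-colour in the same order and use only (ii), (iii) and multiplicativity of $V$, reserving (i) for the verification that $\psi_{\un{n}}(S)\psi_{\un{m}}(T)=\psi_{\un{n}\vee\un{m}}(S\vee T)$ when $\un{n}$ and $\un{m}$ are incomparable.
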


Henceforth we will write $\N\T(\La^*)$ and $\N \O(\La^*)$ for the Nica-Toeplitz-Pimsner and the Cuntz-Nica-Pimsner algebras of $X(\La^*)$, respectively.
It is clear that $(\id, T)$ is a Nica-covariant representation for $X(\La^*)$ such that $\ca(\id, T) = \ca(T)$.
However this particular representation may not be faithful when induced on $\N\T(\La^*)$. 

%%%%%%%%%%%%%%%%%%%%%%%%%%%%%%%%
\subsection{The $\I_F$-ideals of $X(\La^*)$}
%%%%%%%%%%%%%%%%%%%%%%%%%%%%%%%%

Now that we have constructed the algebras associated to $\La^*$, we wish to understand the ideals of relations. 
Note first that the left action on $X(\La^*)$ is by compacts. Indeed a straightforward computation yields
\[
\phi_{\Bi}(a) = \sum_{k \in [d]} \theta^{X_{\Bi}}_{\xi_k, \eta_k} \qfor \xi_k = T_{\de_i(k)} \al_{\de_i(k)}(a) \text{ and } \eta_k = T_{\de_i(k)}.
\]
Hence, it is easy to verify the following proposition for the ideals of solutions.

%%%%%%%%%%%%%%%%%%%%%%%%%%%%%%%%
\begin{proposition}
Let $\La^*$ be an FL in $\fdn$ and let $\mt \neq F \subseteq [N]$.
Then we have
\[
\bigcap_{i\in F}\ker \phi_{\Bi} = \bigcap \{ \ker \al_{\de_i(k)} \mid i \in F, k \in [d]\}
\qand
\I_F = \bigcap\{ \al_{\umu}^{-1}(\J_F) \mid \supp \umu \perp F\}.
\]
\end{proposition}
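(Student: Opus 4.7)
The plan is to verify the two stated identities by concrete computations that exploit the explicit descriptions of $\phi_{\Bi}$ (given by the preceding display) and of the inner product in the fibers $X_{\un{m}}(\La^*)$, together with the intertwining identity $aT_{\umu} = T_{\umu}\al_{\umu}(a)$.

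\textbf{First equality.} It suffices to verify, for each fixed $i\in[N]$, the equality $\ker\phi_{\Bi} = \bigcap_{k\in[d]}\ker\al_{\de_i(k)}$, and then intersect over $i\in F$. Since $X_{\Bi}(\La^*)$ is the closed linear span of $\{T_{\de_i(k)}b \mid k\in[d], b\in A\}$, the condition $\phi_{\Bi}(a)=0$ is equivalent to $aT_{\de_i(k)}=0$ for every $k$. Rewriting $aT_{\de_i(k)} = T_{\de_i(k)}\al_{\de_i(k)}(a)$, I then use that $\al_{\de_i(k)}(a) = T_{\de_i(k)}^*aT_{\de_i(k)}$ lies in the initial projection $T_{\de_i(k)}^*T_{\de_i(k)} = \al_{\de_i(k)}(1_A)$ of the partial isometry $T_{\de_i(k)}$; hence $T_{\de_i(k)}\al_{\de_i(k)}(a) = 0$ if and only if $\al_{\de_i(k)}(a) = 0$.

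\textbf{Second equality.} Here the key identity I aim to establish is
\[
\sca{T_{\umu}\,b,\, a\,T_{\unu}\,c} \;=\; \de_{\umu,\unu}\, b^*\al_{\umu}(a)\,c \qforal \umu,\unu\in\La^* \text{ with } |\umu|=|\unu|=\un{m},\ a,b,c\in A.
\]
Since $aT_{\unu} = T_{\unu}\al_{\unu}(a)$, this reduces to the orthogonality $T_{\umu}^*T_{\unu} = \de_{\umu,\unu}\,\al_{\umu}(1_A)$ for equal multilength. When $\umu\neq\unu$ with $|\umu|=|\unu|$, there is a coordinate $i$ where $\mu_i\neq\nu_i$ but $|\mu_i|=|\nu_i|$. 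Using the commutation $T_{\de_i(k)}T_{\de_j(\ell)} = T_{\de_j(\ell)}T_{\de_i(k)}$ for $i\neq j$ (whenever the product is in $\La^*$) I can permute the coordinate factors so that the offending $i$-coordinate subwords confront each other; then iterated application of the generator-level relation $T_{\de_i(k)}^*T_{\de_i(\ell)} = \de_{k,\ell}T_{\de_i(k)}^*T_{\de_i(k)}$ (essentially the rank-one argument applied to the $i$-th coordinate) yields $T_{\umu}^*T_{\unu}=0$. Consequently $\sca{X_{\un{m}}(\La^*),\,aX_{\un{m}}(\La^*)}$ is the closed ideal generated by $\{\al_{\umu}(a)\mid\umu\in\La^*,\,|\umu|=\un{m}\}$.

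Because $\J_F$ is an ideal of $A$, the requirement $\sca{X_{\un{m}},aX_{\un{m}}}\subseteq\J_F$ for every $\un{m}\perp F$ is therefore equivalent to $\al_{\umu}(a)\in\J_F$ for every $\umu\in\La^*$ with $\supp\umu\perp F$, noting that $\supp\umu=\supp|\umu|$ for $\umu\in\La^*$, and that $\umu\notin\La^*$ gives $\al_{\umu}=0$ and so the condition is vacuous. Appending the case $\umu=\umt$ (which encodes $a\in\J_F$, using $\al_{\umt}=\id$) yields precisely $\I_F = \bigcap\{\al_{\umu}^{-1}(\J_F) \mid \supp\umu\perp F\}$.

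\textbf{Main obstacle.} The non-trivial ingredient is the orthogonality $T_{\umu}^*T_{\unu}=0$ for $\umu\neq\unu$ of equal multilength; all other steps are bookkeeping with the definitions of $\J_F$, $\I_F$, and the fibers. Once this orthogonality is in hand, both equalities follow by straightforward reorganization.
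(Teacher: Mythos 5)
Your proof is correct and is precisely the routine verification the paper omits (it states the proposition with only the remark that it is ``easy to verify'' from the formula for $\phi_{\Bi}(a)$ and the identity $aT_{\umu}=T_{\umu}\al_{\umu}(a)$). The only substantive ingredient, the orthogonality $T_{\umu}^*T_{\unu}=0$ for distinct $\umu,\unu\in\La^*$ of equal multilength, can also be read off directly on basis vectors, since $\unu\ast\uw=\umu\ast\uw'$ with $|\umu|=|\unu|$ forces $\umu=\unu$ coordinatewise in the free semigroup; this is equivalent to, and slightly cleaner than, your permutation-and-peeling argument.
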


Let $(\pi,t)$ be a Nica-covariant representation of $X(\La^*)$ associated to a Nica-covariant representation $(\pi,V)$ of $\La^*$. 
Due to the product system construction and equation \eqref{eq:p'i}, we get that
\[
p_{\Bi} = \sum_{k \in [d]} V_{\de_i(k)} V_{\de_i(k)}^* \in \pi(A)'.
\]
To make a distinction we write $P_{\Bi}$ and $P_{\un{n}}$ for the projections induced by $(\id, T)$.
Consequently
\[
I - P_{\Bi} 
=
I - \sum_{k \in [d]} T_{\de_{i}(k)} T_{\de_i(k)}^*
\]
is the projection on $\ol{\spn} \{e_{\umu} \mid \umu \in \La^*, \mu_i = \mt\}$.
Recall our notation in Subsection \ref{subsec:cnp-ideal}, so that $Q_F = \prod_{i \in F} (I - P_{\Bi})$.
It follows that $Q_F \in A' \cap \ca(T)$ and that $Q_{[N]}$ is the projection $Q_{\umt}$ on $e_{\umt}$.
As the product system is built in a way that reflects the algebraic relations in $\ca(T)$ there is a connection between $Q_F$ and $\bigcap_{i \in F} \ker \phi_{\Bi}$.

%%%%%%%%%%%%%%%%%%%%%%%%%%%%%%%%
\begin{proposition}\label{P:I_F Q_F}
Let $\La^*$ be an FL in $\fdn$ and let $F \subseteq [N]$.
Then
\[
\bigcap_{i\in F}\ker \phi_{\Bi} = \{a \in A \mid a = a Q_F\}
\qand
\{a \in A \mid a Q_F = 0\} \subseteq \I_F.
\]
Hence, $Q_F \in A$ if and only if $\I_F = A (I - Q_F)$.
\end{proposition}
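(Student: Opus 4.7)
The plan is to reduce every claim to explicit support tracking on the basis $\{e_{\umu}\}_{\umu \in \La^*}$, using the fact that $A$ acts diagonally. First I will record the elementary formulas: since each generator $T_{\unu}^* T_{\unu}$ of $A$ is the diagonal projection onto $\spn\{e_{\uw} \mid \unu \ast \uw \in \La^*\}$, every $a \in A$ satisfies $a e_{\umu} = a(\umu)\, e_{\umu}$ for some scalar $a(\umu)$, and a direct computation gives $Q_F e_{\umu} = e_{\umu}$ precisely when $\umu \perp F$ (and zero otherwise), together with $\al_{\unu}(a) e_{\uw} = a(\unu \ast \uw)\, e_{\uw}$ when $\unu \ast \uw \in \La^*$ (and zero otherwise). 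This translates the identities $a = a Q_F$ and $a Q_F = 0$ into the support conditions ``$a(\umu) = 0$ for all $\umu \not\perp F$'' and ``$a(\umu) = 0$ for all $\umu \perp F$'', respectively.

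For the first equality I will combine this dictionary with the preceding proposition. Membership in $\bigcap_{i \in F} \ker\phi_{\Bi}$ is equivalent to $\al_{\de_i(k)}(a) = 0$ for every $i \in F$ and $k \in [d]$, which by the formula above is the same as $a(\de_i(k) \ast \uw) = 0$ for every admissible $(i,k,\uw)$; i.e.\ $a$ vanishes on every $\umu$ with $\mu_i \neq \mt$ for some $i \in F$. This is exactly the support condition encoded by $a = a Q_F$.

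For the inclusion $\{a \mid a Q_F = 0\} \subseteq \I_F$, the key observation is that $a Q_F = 0$ together with $\supp \unu \perp F$ forces $\al_{\unu}(a) Q_F = 0$: on basis vectors $e_{\uw}$ with $\uw \perp F$ one has $\unu \ast \uw \perp F$, so $a(\unu \ast \uw) = 0$, while on the remaining basis vectors $Q_F$ acts as zero. Once I observe that ``$b Q_F = 0$'' means $b$ has support disjoint from that of every element of $\bigcap_{i \in F} \ker\phi_{\Bi}$ (by the first equality), and consequently $b \in \J_F = (\bigcap_{i \in F} \ker\phi_{\Bi})^{\perp}$, I apply this both to $a$ and to each $\al_{\unu}(a)$ with $\supp \unu \perp F$ to conclude that $a \in \I_F$.

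Finally, for the ``Hence'' claim I will prove both directions. If $Q_F \in A$, then $Q_F = Q_F \cdot Q_F$ puts $Q_F$ itself inside $\bigcap_{i \in F} \ker\phi_{\Bi}$ by the first equality, so any $a \in \I_F \subseteq \J_F$ annihilates $Q_F$; thus $a = a(I - Q_F) \in A(I - Q_F)$, while the reverse inclusion $A(I - Q_F) \subseteq \I_F$ is immediate from the preceding paragraph applied to $b(I - Q_F)$, as $b(I - Q_F) Q_F = 0$. Conversely, if $\I_F = A(I - Q_F)$ as a subset of $A$, then specializing $b = I \in A$ shows $I - Q_F \in A$, hence $Q_F \in A$. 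I do not anticipate a significant obstacle: the whole argument is support bookkeeping once the diagonal picture is in place, and the only care required is to keep straight which assertion corresponds to ``supported on $\{\umu \perp F\}$'' versus ``supported on $\{\umu \not\perp F\}$''.
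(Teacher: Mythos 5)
Your proof is correct and follows essentially the same route as the paper's: the characterization of $\bigcap_{i\in F}\ker\phi_{\Bi}$ as $\{a \mid a = aQ_F\}$, the orthogonality of the two support conditions to land in $\J_F$, and the commutation $Q_F T_{\unu} = T_{\unu}Q_F$ for $\unu \perp F$ to get $F^\perp$-invariance are exactly the paper's steps, merely unpacked entrywise in the diagonal representation on $\ell^2(\La^*)$ rather than via the operator identities $aT_{\de_i(k)} = T_{\de_i(k)}\al_{\de_i(k)}(a)$ and $(I-P_{\Bi})T_{\de_i(k)} = 0$. The only point worth making explicit is that $\J_F$ reduces to $(\bigcap_{i\in F}\ker\phi_{\Bi})^{\perp}$ because the left actions are by compacts, which the paper records just before the proposition.
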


\begin{proof}
Let $a \in \bigcap_{i \in F} \ker \phi_{\Bi}$ and $i \in F$.
Then $a T_{\de_i(k)} = T_{\de_i(k)} \al_{\de_i(k)}(a) = 0$ for every $k \in [d]$, and therefore
\[
a (I - P_{\Bi}) = a (I - \sum_{k \in [d]} T_{\de_{i}(k)} T_{\de_i(k)}^*) = a.
\]
As this holds for every $i \in F$ we get that $a Q_F = a$.
Conversely if $a Q_F = a$ then 
\[
\al_{\de_i(k)}(a) = T_{\de_i(k)}^* a T_{\de_i(k)} = T_{\de_i(k)}^* a Q_F T_{\de_i(k)} = T_{\de_i(k)}^* a Q_F (I - P_i) T_{\de_i(k)} = 0
\]
for all $(i,k) \in F \times [d]$, which shows that $a \in \bigcap_{i \in F} \ker \phi_{\Bi}$.
For the second inclusion let $a \in A$ such that $a Q_F = 0$ and let $b \in \bigcap_{i \in F} \ker \phi_{\Bi}$.
Then $b = bQ_F$ and so
\[
ab = ab Q_F = a Q_F b = 0,
\]
which gives that $a \in \J_F$.
Moreover, if $\umu$ has support perpendicular to $F$ then it follows that $Q_F T_{\umu} = T_{\umu} Q_F$.
Thus, if $aQ_F = 0$, then
\[
\al_{\umu}(a) Q_F = T_{\umu}^* a Q_F T_{\umu} = 0,
\]
and as above we get $\al_{\umu}(a) \in \J_F$ when $\supp \umu \perp F$.
Therefore we have that $a \in \I_F$.

For the last item, if $Q_F \in A$ then $\bigcap_{i \in F} \ker\phi_{\Bi} = A \cdot Q_F$.
From this it follows that $\J_F = A (I - Q_F)$ which is $\al_{\umu}$-invariant when $\umu \perp F$.
Hence $\I_F = A (I - Q_F)$.
The converse follows since $A$ is unital.
\end{proof}

%Of course there is a $\{a \in A \mid a = a Q_F\}$ and $A \cdot Q_F$.
%The first one is an ideal in $A$ whereas the second one may not be in $A$.
%If $Q_F \in A$ then they coincide but this may not hold in general.
When $\La^*$ is rank one then there is a dichotomy on $\ker\phi$: either $\ker\phi = \bC Q_{\mt}$ or $\ker\phi = (0)$ \cite{KS15}.
In fact it is shown in \cite{KS15} that $Q_{\mt} \in A$ if and only if for every $k \in [d]$ there is a $\mu_k$ such that $\mu_k k \notin \La^*$. 
However this dichotomy no longer holds in the higher rank context. 
There may be elements in $\bigcap_{i \in F} \ker \phi_{\Bi}$ that are not scalar multiples of the products of all
\[
T_{\umu(i, k)}^* T_{\umu(i, k)} \text{ such that if } (i, k) \in F \times [d] \text{ then }  \umu(i, k) \in \La^* \text{ while } \umu(i, k) \ast \de_{i}(k) \notin \La^*,
\]
even when such words exist.
Moreover in Example \ref{E:dich fail} we will see that it is possible to have $\bigcap_{i \in F} \ker \phi_{\Bi} \neq (0)$ for $F \neq [N]$ but $\bigcap_{i \in [N]} \ker \phi_{\Bi} = (0)$. On the positive side we have the following proposition. It shows that a dichotomy holds for $F = [N]$.

%%%%%%%%%%%%%%%%%%%%%%%%%%%%%%%%
\begin{proposition}\label{P:IF forb}
Let $\La^*$ be an FL in $\fdn$ and fix $\mt \neq F \subseteq [N]$.
Then the following are equivalent:
\begin{enumerate}
\item There exists an $a \in \bigcap_{i \in F} \ker \phi_{\Bi}$ such that $a Q_{F^c} \neq 0$.
\item There exists an $a \in A$ such that $a Q_{F^c} = Q_{\umt}$.
\item For every $\de_i(k) \in \La^*$ with $(i,k) \in F \times [d]$ there exists $\umu \in \La^*$ such that $\umu \ast \de_i(k) \notin \La^*$.
\end{enumerate}
If $F = [N]$ then the above are also equivalent to:
\begin{enumerate}
\item[\textup{(iv)} ] $\bigcap_{i \in [N]} \ker \phi_{\Bi} = \bC Q_{\umt}$.
\item[\textup{(v)} ] $\bigcap_{i \in [N]} \ker \phi_{\Bi} \neq (0)$.
\item[\textup{(vi)} ] $Q_{\umt} \in A$.
\end{enumerate}
\end{proposition}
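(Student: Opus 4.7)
The plan is to work with the faithful diagonal action of $A$ on $\ell^2(\La^*)$ in the orthonormal basis $\{e_{\umu}\}_{\umu \in \La^*}$, where each $T_{\umu}^{*} T_{\umu}$ is the projection onto $\spn\{e_{\uw} : \umu \ast \uw \in \La^*\}$ and hence every $a \in A$ acts diagonally as $a e_{\umu} = c_{\umu}(a)\, e_{\umu}$ for scalars $c_{\umu}(a)$. The projection $Q_F$ is then the diagonal projection onto $\spn\{e_{\uw} : \supp \uw \cap F = \mt\}$, so $Q_F$ and $Q_{F^c}$ commute and satisfy $Q_F Q_{F^c} = Q_{\umt}$, the rank-one projection onto $e_{\umt}$. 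By Proposition \ref{P:I_F Q_F}, any $a \in \bigcap_{i \in F}\ker\phi_{\Bi}$ satisfies $a = a Q_F$, whence $a Q_{F^c} = a Q_{\umt} = c_{\umt}(a)\, Q_{\umt}$; this single identity immediately reduces the equivalence (i)$\Leftrightarrow$(ii) to normalizing a nonzero $a$ by its $\umt$-entry.

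For (iii)$\Rightarrow$(i) I will pick a witness $\umu_{i,k} \in \La^*$ from (iii) for every $\de_i(k) \in \La^*$ with $i \in F$, and form
\[
b_F := \prod \{ T_{\umu_{i,k}}^{*} T_{\umu_{i,k}} \mid (i,k) \in F \times [d], \; \de_i(k) \in \La^* \} \in A.
\]
A direct check using the factorial property shows that $b_F$ annihilates every $e_{\uw}$ with $\supp \uw \cap F \neq \mt$: if $w_i \neq \mt$ for some $i \in F$ with leading letter $k$, then $\umu_{i,k} \ast \uw$ contains the forbidden word $\umu_{i,k} \ast \de_i(k)$, so $T_{\umu_{i,k}}^{*} T_{\umu_{i,k}} e_{\uw} = 0$. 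On the other hand $b_F e_{\umt} = e_{\umt}$, so $b_F = b_F Q_F$ lies in $\bigcap_{i\in F}\ker\phi_{\Bi}$ and $b_F Q_{F^c} = Q_{\umt} \neq 0$. Conversely, for (ii)$\Rightarrow$(iii) I argue the contrapositive: if some $\de_{i_0}(k_0) \in \La^*$ with $i_0 \in F$ satisfies $\umu \ast \de_{i_0}(k_0) \in \La^*$ for every $\umu \in \La^*$, then $\de_{i_0}(k_0) \sim \umt$, so $c_{\de_{i_0}(k_0)}(T_{\umu}^{*} T_{\umu}) = c_{\umt}(T_{\umu}^{*} T_{\umu})$ for every generator and, by density and continuity, $c_{\de_{i_0}(k_0)}(a) = c_{\umt}(a)$ for every $a \in A$. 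Since $\supp \de_{i_0}(k_0) = \{i_0\} \subseteq F$ gives $Q_{F^c} e_{\de_{i_0}(k_0)} = e_{\de_{i_0}(k_0)}$, the operator $a Q_{F^c}$ agrees at $e_{\umt}$ and $e_{\de_{i_0}(k_0)}$, while $Q_{\umt}$ vanishes on the latter; this obstructs the equality $a Q_{F^c} = Q_{\umt}$ for any $a \in A$.

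For the $F = [N]$ block, note first that $Q_{F^c} = I$, so (i) collapses to (v). The identification (iii)$\Leftrightarrow$(vi) comes from the duality $A \simeq C(\Om)$: having $Q_{\umt} \in A$ amounts to $\{[\umt]\}$ being an isolated point of $\Om$, i.e., there exists $\un{n}$ with $\umt \not\sim_{\un{n}} \unu$ for every $\unu \in \La^* \setminus \{\umt\}$. Setting $\unu = \de_i(k)$ yields (iii). For the converse, every $\unu \neq \umt$ in $\La^*$ contains a single letter $\de_i(k) \in \La^*$ as a subword, and the subword-inheritance property $\uw \ast \de_i(k) \notin \La^* \Rightarrow \uw \ast \unu \notin \La^*$ lifts the finitely many single-letter witnesses from (iii) to a uniform $\un{n}$. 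Given (vi), Proposition \ref{P:I_F Q_F} gives $\bigcap_{i\in [N]}\ker\phi_{\Bi} = A Q_{\umt}$, which equals $\bC Q_{\umt}$ since $Q_{\umt}$ is rank one; this delivers (iv), while (iv)$\Rightarrow$(v) is trivial.

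The main technical point is the passage, in (ii)$\Rightarrow$(iii), from diagonal entries of the generators $T_{\umu}^{*} T_{\umu}$ to diagonal entries of an arbitrary $a \in A$; this rests on density of the $*$-polynomial algebra in $A$, norm-continuity of the vector states $a \mapsto c_{\umu}(a) = \sca{e_{\umu}, a\, e_{\umu}}$, and the fact that $e_{\de_{i_0}(k_0)}$ sits in the range of $Q_{F^c}$. The combinatorial bridge from single-letter witnesses to the global statement about $\Om$ in (iii)$\Leftrightarrow$(vi) is the other delicate piece, but it is clean once one observes that non-membership in $\La^*$ is monotone under the subword order of $(\bF_+^d)^N$.
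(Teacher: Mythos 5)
Your proof is correct and follows essentially the same route as the paper: the same witness element $\prod_{(i,k)} T_{\umu(i,k)}^*T_{\umu(i,k)}$ for (iii)$\Rightarrow$(i), the same use of Proposition \ref{P:I_F Q_F} and of the identity $Q_FQ_{F^c}=Q_{\umt}$ for (i)$\Rightarrow$(ii), and the same obstruction at the vector $e_{\de_{i_0}(k_0)}$ for (ii)$\Rightarrow$(iii) — your diagonal-entry/character bookkeeping $c_{\uw}(a)$ merely repackages the paper's extraction of the coefficient of the minimal projection $Q_0$ from approximants $a_\ka\in A_{n\cdot\un{1}}$. One small point of care in your (iii)$\Rightarrow$(vi) step: the inheritance $\uw\ast\de_i(k)\notin\La^*\Rightarrow\uw\ast\unu\notin\La^*$ requires $\de_i(k)$ to be the \emph{leading} letter of a nonempty coordinate of $\unu$ (so that $\uw\ast\unu=(\uw\ast\de_i(k))\ast\unu'$), not an arbitrary subword occurrence — exactly the choice you already make in your (iii)$\Rightarrow$(i) argument.
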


\begin{proof}

[(i) $\Rightarrow$ (ii)]: 
Without loss of generality we may assume that there exists an $n \in \bZ_+$ such that $a \in \bigcap_{i \in F} \ker \phi_{\Bi} \cap A_{n \cdot \un{1}}$.
Write $\{Q_m\}_m$ for the minimal projections of $A_{n \cdot \un{1}}$ given by products of $T_{\umu}^* T_{\umu}$ and $I- T_{\unu}^* T_{\unu}$ for $|\umu|, |\unu| \leq n \cdot \un{1}$.
We reserve $Q_0$ for the product of all the $T_{\umu}^* T_{\umu}$ with $|\umu| \leq n \cdot \un{1}$.
Thus we can write $a= \sum_m \la_m Q_m$.
Since $Q_{\umt} \leq T_{\un{\nu}}^* T_{\un{\nu}}$ for all $\un{\nu} \in \La^*$ then $Q_{\umt} Q_m = 0$ for all $m \neq 0$, and $Q_{\un{\mt}} Q_0 = Q_{\un{\mt}}$.
Proposition \ref{P:I_F Q_F} gives that $a = a Q_F$ and so 
\begin{align*}
0 \neq a Q_{F^c} 
& = 
a Q_F Q_{F^c} = a Q_{\un{\mt}} 
 =
\sum_m \la_m Q_m Q_{\un{\mt}}
= \la_0 Q_{\un{\mt}}.
\end{align*}
Hence $\la_0 \neq 0$ and $Q_{\umt} = (\la_0^{-1} a ) Q_{F^c}$.

\noindent
[(ii) $\Rightarrow$ (iii)]:
Suppose there exists an $a \in A$ with $Q_{\un{\mt}} = a Q_{F^c}$.
Then $a Q_{\un{\mt}} = a Q_{F^c} Q_{\un{\mt}} = Q_{\un{\mt}}$.
Let $a_\ka \in A_{n \cdot \un{1}}$ converge to $a$, and so $\lim_\ka a_\ka Q_{\un{\mt}} = a Q_{\un{\mt}} = Q_{\un{\mt}}$.
By using a decomposition of each $a_\ka$ with respect to the minimal projections in $A_{n \cdot \un{1}}$, we find that the $0$-coefficients $\la_\ka Q_0$ tend in norm to $Q_{\un{\mt}}$. Consequently we have that
\[
1 = \| Q_{\un{\mt}} e_{\un{\mt}} \| = \lim_\ka \| \la_\ka Q_0 e_{\un{\mt}} \| = \lim_\ka |\la_\ka| .
\]
If there were a $\de_i(k) \in \La^*$ such that $\un{\mu} \ast \de_i(k) \in \La^*$ for all $\umu \in \La^*$, then $Q_0 e_{\de_i(k)} = e_{\de_i(k)}$.
Hence we would derive the contradiction
\[
0 = \| Q_{\un{\mt}} e_{\de_i(k)} \| = \lim_\ka \| \la_\ka Q_0 e_{\de_i(k)} \| = \lim_\ka |\la_\ka|.
\]

\noindent
[(iii) $\Rightarrow$ (i)]:
For $(i,k) \in F \times [d]$, let $\umu(i,k) \in \La^*$ such that $\umu(i,k) \ast \de_i(k) \notin \La^*$.
Then the element
\begin{equation}\label{eq:con a}
a = \prod_{(i,k) \in F \times [d]} T_{\umu(i,k)}^* T_{\umu(i,k)}
\end{equation}
satisfies item (i).
Indeed notice that $a Q_{F^c} = Q_{\umt}$, since $a e_{\delta_i(k)} = 0$ for all $(i,k) \in F \times [d]$. 

\noindent
[(iii) $\Leftrightarrow$ (iv) $\Leftrightarrow$ (v) $\Leftrightarrow$ (vi)]:
These equivalences follow by applying the same arguments with the identity $I$ in the place of $Q_{F^c}$ to show that $Q_{\un{\emptyset}} \in A$, so that Proposition \ref{P:I_F Q_F} finishes the proof.
\end{proof}

%%%%%%%%%%%%%%%%%%%%%%%%%%%%%%%%
\begin{example}\label{E:dich fail}
Let $\La^* \subseteq \bF_+^3 \times \bF_+^3$ on three symbols $\{0,1,2\}$ defined by the forbidden words
\[
\F = \{(1,1), (2,2), (0,2), (\mt, 02) \}.
\]
We claim that the element
\[
a := T_{(\mt,1)}^* T_{(\mt,1)} \cdot T_{(\mt, 2)}^* T_{(\mt, 2)} \cdot (I - T_{(\mt, 0)}^* T_{(\mt, 0)})
\]
is a non-trivial element in $\ker \phi_{\bo{1}}$. 
First notice that
\[
T_{(\mt,1)}^* T_{(\mt,1)} e_{(\mt,2)} = T_{(\mt,2)}^* T_{(\mt,2)} e_{(\mt,2)} = (I - T_{(\mt, 0)}^* T_{(\mt, 0)}) e_{(\mt,2)} = e_{(\mt,2)}
\]
and thus $a \neq 0$.
Since $(1,1) \notin \La^*$ we have that
\[
T_{(\mt,1)}^* T_{(\mt,1)} T_{(1, \mt)} e_{(\mu,\nu)} = 0 \foral (\mu,\nu) \in \La^*.
\]
Therefore $T_{(\mt,1)}^* T_{(\mt,1)}$ is in $\ker \al_{\de_1{(1)}}$.
Similarly we obtain that $T_{(\mt,2)}^* T_{(\mt,2)}$ is in $\ker \al_{\de_1{(2)}}$.
By construction of the forbidden set we also have that $(0\mu,\nu) \in \La^*$ if and only if $(0\mu,0\nu) \in \La^*$.
Indeed if $(0\mu, \nu) \in \La^*$ then $2 \notin \nu$.
If $(0\mu,0\nu) \notin \La^*$ then there should be a forbidden word $(0\mu',0\nu')$ in $(0\mu, 0\nu)$; this can only happen if $2 \in \nu'$ giving the contradiction $2 \in \nu$.
Conversely if $(0\mu,0\nu) \in \La^*$ then its subword $(0\mu,\nu)$ is also in $\La^*$.
Therefore we have that
\begin{align*}
T_{(\mt, 0)}^* T_{(\mt, 0)} T_{(0,\mt)} e_{(\mu, \nu)}
& =
\begin{cases}
e_{(0\mu,\nu)} & \text{ if } (0\mu,0\nu) \in \La^* \text{ and } (0\mu,\nu) \in \La^*, \\
0 & \text{otherwise},
\end{cases}
\\
& =
\begin{cases}
e_{(0\mu,\nu)} & \text{ if } (0\mu,\nu) \in \La^*, \\
0 & \text{otherwise},
\end{cases}
\\
& =
T_{(0,\mt)} e_{(\mu,\nu)}
\end{align*}
which shows that $I - T_{(\mt, 0)}^* T_{(\mt, 0)} \in \ker \al_{\de_1{(0)}}$. Thus, we have that $a\in \cap_{j=0}^2 \ker \al_{\de_1(j)} = \ker \phi_{\bo{1}}$.
Note here that $a Q_{\{2\}} = 0$. Indeed, since
\[
a Q_{\{2\}} = a (I-P_{\bo{2}}) = a(I - \sum_{j=0}^2T_{\de_2(j)}T_{\de_2(j)}^*)
\]
we see that $Q_{\{2\}}e_{(\mu,\nu)} \neq 0$ forces $\nu = \emptyset$. Then $I - T^*_{(\emptyset, 0)}T_{(\emptyset,0)} e_{(\mu, \emptyset)} = 0$ for every $\mu \in \bF_+^3$, so that $aQ_{\{2\}} = 0$ (compare this with item (i) of Proposition \ref{P:IF forb}). 

At the same time we have $\I_{[2]} = A$ since $\ker \phi_{\bo{1}} \cap \ker \phi_{\bo{2}} = (0)$. Indeed, by Proposition \ref{P:IF forb} it will suffice to show there's no word $(\mu,\nu) \in \La^*$ such that $(\mu,\nu) \ast (\mt, 0) \notin \La^*$. However, it is clear from the definition of forbidden words that $(\mu, \nu0) \notin \La^*$ if and only if $(\mu, \nu) \notin \La^*$, and we are done.
\end{example}

%%%%%%%%%%%%%%%%%%%%%%%%%%%%%%%%
\subsection{Consequences of the GIUT}
%%%%%%%%%%%%%%%%%%%%%%%%%%%%%%%%

In the rank one case, the Cuntz-Pimsner algebra $\O(\La^*)$ is only one of the extremes of $\ca(T)$ and $\ca(T)/\K$.
However this is no longer true in more directions.
In fact we have to consider a larger ideal than the compacts.
To this end let $\fI_Q$ be the ideal in $\ca(T)$ generated by the $Q_F$, i.e.
\[
\fI_Q := \sca{Q_F \mid \mt \neq F \subseteq [N]}.
\]
Proposition \ref{P:K ideal} implies that $\fI_Q$ contains $\K(\id, T)$, as well as the compacts $\K = \sca{Q_{\umt}}$.
For the latter notice that  $T_{\unu} Q_{\umt} T_{\umu}^*$ is the rank one operator $e_{\umu} \mapsto e_{\unu}$.
Thus there is a canonical $*$-epimorphism $\N\O(\La^*) \to \ca(T)/ \fI_Q$.
On the other hand Corollary \ref{C:co-un} asserts that $\ca(T)$ lies above $\N\O(\La^*)$ (as trivially $A \hookrightarrow \ca(T)$).
Consequently we have the following canonical $*$-epimorphisms
\[
\N\T(\La^*) \to \ca(T) \to \N\O(\La^*) \to \ca(T)/ \fI_Q.
\]
First we ask when is it possible for $\N\O(\La^*)$ to coincide with $\ca(T)$ or with $\ca(T)/ \fI_Q$, and secondly if $\N\O(\La^*)$ can take values ``in-between''.
We answer these questions in the following three applications of the GIUT.

%%%%%%%%%%%%%%%%%%%%%%%%%%%%%%%%
\begin{corollary}\label{C:ext 1}
Let $\La^*$ be an FL in $\fdn$.
Then the following are equivalent:
\begin{enumerate}
\item the canonical $*$-epimorphism $\N\O(\La^*) \to \ca(T)/ \fI_Q$ is injective;
\item $\I_F = A$ for all $\mt \neq F \subseteq [N]$.
\end{enumerate}
\end{corollary}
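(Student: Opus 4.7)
The plan is to reduce the question to comparing the two ideals $\K(\id, T)$ and $\fI_Q$ of $\ca(T)$, and then to characterize when they coincide in terms of the ideals $\I_F$. First I would verify that the representation $(\id, T)$ on $\ell^2(\La^*)$ is Nica-covariant with $\id$ injective and admits a gauge action, the latter being implemented by the diagonal unitaries $U_{\un z} e_{\umu} := \un z^{|\umu|} e_{\umu}$ for $\un z \in \bT^N$. Since $X(\La^*)$ is strong compactly aligned (its left action is by compacts), Corollary \ref{C:co-un} yields a canonical $*$-isomorphism $\ca(T)/\K(\id,T) \simeq \N\O(\La^*)$. Because $q_F = Q_F$ for the representation $(\id,T)$, the ideal $\K(\id,T)$ is generated by $\{aQ_F : a \in \I_F, \mt \neq F \subseteq [N]\}$, so $\K(\id,T) \subseteq \fI_Q$ automatically. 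Under the isomorphism above, the canonical $*$-epimorphism $\rho \colon \N\O(\La^*) \to \ca(T)/\fI_Q$ is precisely the quotient $\ca(T)/\K(\id,T) \to \ca(T)/\fI_Q$, so $\rho$ is injective if and only if $\K(\id,T) = \fI_Q$, equivalently $Q_F \in \K(\id,T)$ for every $\mt \neq F \subseteq [N]$.

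For (ii)$\Rightarrow$(i), since $1 \in \I_F$ we obtain $Q_F = \id(1) Q_F \in \fI_F \subseteq \K(\id,T)$ for each $F$, so $\fI_Q \subseteq \K(\id,T)$. For (i)$\Rightarrow$(ii), I would assume $\rho$ is injective and fix $\mt \neq F \subseteq [N]$. A direct computation shows that $\phi_{\bo i}(1_A) = \sum_k \theta_{T_{\de_i(k)}, T_{\de_i(k)}}$ is the unit of $\K X_{\bo i}$, so every $p_i^u = \psi_{u, \bo i}(\phi_{\bo i}(1_A))$ and hence $q_F^u = \prod_{i \in F}(I - p_i^u)$ is a well-defined element of $\N\O(\La^*)$. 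Under the isomorphism $\ca(T)/\K(\id,T) \simeq \N\O(\La^*)$ the class of $Q_F$ corresponds to $q_F^u$, and $Q_F \in \fI_Q = \K(\id,T)$ forces $q_F^u = 0$. Multiplying on the left by $\pi_u(a)$ for an arbitrary $a \in A$ and using that every $\phi_{\un n}(a)$ is compact, one expands
\[
0 = \pi_u(a) q_F^u = \sum_{\un n \leq \un 1_F} (-1)^{|\un n|} \psi_{u, \un n}(\phi_{\un n}(a)),
\]
which rearranges to $\pi_u(a) \in \B_{(\un 0, \un 1_F]}$. Since $\pi_u$ is injective by Theorem \ref{T:CNP is CNP}(i), Proposition \ref{P:in IF} yields $a \in \I_F$, and as $a \in A$ was arbitrary, $\I_F = A$.

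The main subtlety in the direction (i)$\Rightarrow$(ii) is to ensure that $q_F^u$ is a genuine element of $\N\O(\La^*)$ rather than only a w*-limit in a concrete representation, since the argument requires multiplying it by $\pi_u(a)$ and identifying the result with the finite sum defining $\B_{(\un 0, \un 1_F]}$. This is where the factorial-language setting plays a distinguished role: each $\K X_{\bo i}$ is unital, so the net defining $p_i^u$ is eventually constant and $q_F^u$ is an honest polynomial in the generators. Once this is secured the expansion above, together with Proposition \ref{P:in IF} and the injectivity of $\pi_u$ from Theorem \ref{T:CNP is CNP}(i), closes the argument.
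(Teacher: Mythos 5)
Your proof is correct and follows essentially the same route as the paper: both directions ultimately rest on Corollary \ref{C:co-un} (giving $\ca(T)/\K(\id,T)\simeq\N\O(\La^*)$), the identity $aQ_F=\sum_{\un{n}\leq\un{1}_F}(-1)^{|\un{n}|}\psi_{\un{n}}(\phi_{\un{n}}(a))$ valid because all left actions are by compacts, and Proposition \ref{P:in IF}. The only (harmless) difference is that for (i)$\Rightarrow$(ii) the paper applies Proposition \ref{P:in IF} to the quotient representation on $\ca(T)/\fI_Q$, whose restriction to $A$ is injective since it factors through $A\hookrightarrow\N\O(\La^*)$, whereas you transport $Q_F$ into $\N\O(\La^*)$ using the unitality of each $\K X_{\bo{i}}$ and run the same argument there.
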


\begin{proof}
Fix $q \colon \ca(T) \to \ca(T)/ \fI_Q$.
If item (i) holds then $q|_A$ is injective as it factors through $A \hookrightarrow \N\O(\La^*)$.
Moreover $q(a) q(Q_F) = 0$ for all $a \in A$ since $q(Q_F) = 0$. 
It then follows from Proposition \ref{P:in IF} that $A \subseteq \I_F$.
Conversely if item (ii) holds then $\fI_Q = \K(\id, T)$ so that if $a \in \K(\id,T) \cap A$, we would have that $a=0$ by Corollary \ref{C:co-un}. Thus, $A$ embeds in $\ca(T)/ \fI_Q$, and the GIUT yields $\N\O(\La^*) \simeq \ca(T)/ \fI_Q$.
\end{proof}

%%%%%%%%%%%%%%%%%%%%%%%%%%%%%%%%
\begin{corollary}\label{C:ext 2}
Let $\La^*$ be an FL in $\fdn$.
Then the following are equivalent:
\begin{enumerate}
\item the canonical $*$-epimorphism $\ca(T) \to \N\O(\La^*)$ is injective;
\item $\I_{F} = \{a \in A \mid a Q_F = 0\}$ for all $\mt \neq F \subseteq [N]$.
\end{enumerate}
\end{corollary}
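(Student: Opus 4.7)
The plan is to deduce this from the co-universality statement (Corollary \ref{C:co-un}) applied to the concrete representation $(\id, T)$, together with the description of the CNP-ideal from Proposition \ref{P:K ideal} and the automatic inclusion supplied by Proposition \ref{P:I_F Q_F}.

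First I would verify that Corollary \ref{C:co-un} applies to $(\id, T)$. Nica-covariance is already built into the construction of the product system, and $(\id, T)$ is trivially isometric on $A$. For the gauge action, I would exhibit the unitaries $U_{\un{z}} e_{\umu} := \un{z}^{|\umu|} e_{\umu}$ on $\ell^2(\La^*)$; conjugation by $U_{\un{z}}$ sends $T_{\unu}$ to $\un{z}^{|\unu|} T_{\unu}$ and fixes $A$, whose generators $T_{\umu}^* T_{\umu}$ are diagonal in the standard basis. Corollary \ref{C:co-un} then yields that the canonical $*$-epimorphism $\ca(T) \to \N\O(\La^*)$ has kernel $\K(\id, T)$, so that item (i) is equivalent to the vanishing of the CNP-ideal $\K(\id, T)$.

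The next step is to unwrap $\K(\id, T)$. For the representation $(\id, T)$ the projections from equation \eqref{eq:p'i} are precisely the $P_{\Bi}$, so the product projections $q_F$ of Subsection \ref{subsec:cnp-ideal} become the operators $Q_F$. Hence by Proposition \ref{P:K ideal}, $\K(\id, T) = \sum_{\mt \neq F \subseteq [N]} \fI_F$, where each $\fI_F$ is the ideal of $\ca(T)$ generated by $\{a Q_F \mid a \in \I_F\}$. A sum of ideals vanishes precisely when each summand does, and an ideal vanishes precisely when its generators vanish; therefore $\K(\id, T) = 0$ is equivalent to $a Q_F = 0$ for all $a \in \I_F$ and all $\mt \neq F \subseteq [N]$, i.e.\ to the inclusion $\I_F \subseteq \{a \in A \mid a Q_F = 0\}$ for every such $F$.

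To close the loop, I would invoke the reverse inclusion $\{a \in A \mid a Q_F = 0\} \subseteq \I_F$ from Proposition \ref{P:I_F Q_F}, which holds unconditionally. Combining the two inclusions, the vanishing of $\K(\id, T)$ is equivalent to item (ii). The only step where care is needed is the justification that $(\id, T)$ falls within the hypotheses of Corollary \ref{C:co-un}; everything else is formal manipulation of the ideal-theoretic content already established. I do not expect any genuine obstacle here, since the gauge action on $\ca(T)$ is essentially the restriction of the obvious unitary implementation on $\ell^2(\La^*)$.
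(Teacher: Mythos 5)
Your argument is correct. It differs from the paper's only in which established results carry the load: the paper observes that condition (ii) is precisely the assertion that $(\id,T)$ is a CNP-representation (the relation $aQ_F=0$ for $a\in\I_F$ is the alternating-sum condition, and Proposition \ref{P:I_F Q_F} supplies the reverse inclusion $\{a\in A\mid aQ_F=0\}\subseteq\I_F$), and then runs the two implications separately --- (i)$\Rightarrow$(ii) because an isomorphism onto $\N\O(\La^*)$ forces $(\id,T)$ to satisfy the CNP relations, and (ii)$\Rightarrow$(i) by the GIUT (Theorem \ref{T:GIUT}). You instead invoke Corollary \ref{C:co-un} once to identify the kernel of the canonical epimorphism with $\K(\id,T)$, and then use Proposition \ref{P:K ideal} to reduce the vanishing of $\K(\id,T)=\sum_F\fI_F$ to the vanishing of the generators $aQ_F$ with $a\in\I_F$. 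The two routes are equivalent, since the vanishing of those generators is exactly the CNP condition for $(\id,T)$; your packaging handles both directions in one stroke and makes explicit that the obstruction to injectivity is the ideal $\sum_F\fI_F$. Your verification that $(\id,T)$ meets the hypotheses of Corollary \ref{C:co-un} (gauge action spatially implemented by $U_{\un{z}}e_{\umu}=\un{z}^{|\umu|}e_{\umu}$, injectivity on $A$) is sound, and is in any case already taken for granted in the paper's discussion preceding Corollary \ref{C:ext 1}.
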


\begin{proof}
Suppose that item (i) holds and fix $F \subseteq [N]$. 
Then $(\id, T)$ is a CNP-re\-pre\-sentation and so $a Q_F = 0$ for all $a \in \I_F$.
Proposition \ref{P:I_F Q_F} implies that $\I_F = \{a \in A \mid a Q_F = 0\}$.
Conversely, if item (ii) holds then $(\id, T)$ defines a CNP-representation for $\La^*$.
Since $A \subseteq \ca(T)$, then the GIUT implies that $\ca(T) \simeq \N\O(\La^*)$.
\end{proof}

Next we determine the form of $\N\O(\La^*)$ when $\La^*$ is a product of rank one languages, as the tensor product of Cuntz-Pimsner algebras.
Recall that the Pimsner algebra of a rank one language is nuclear \cite{Kat04, KS15}, and so all tensor products can be taken to be minimal.
For an FL $\La^*$ in $\fdn$, denote by $\La^*_i$ its projection at the $i$-th coordinate, so that $\La^* \subseteq \La^*_1 \times \cdots \times \La^*_N$.
To make a distinction suppose that $\ca(\La^*_i) = \ca(t_{i,k} \mid k \in [d])$ and define
\[
\wh{t}_{i,k} = I \otimes \cdots \otimes I \otimes \underbrace{t_{i,k}}_{\textup{$i$-th position}} \otimes I \otimes \cdots \otimes I
\in
\ca(\La^*_1) \otimes \cdots \otimes \ca(\La^*_N).
\]
Then we may define the correspondence
\begin{equation*}
\N\O(\La^*) \ni V_{\de_i(k)} \mapsto \wh{t}_{i,k} \in \ca(t_{1,k} \mid k \in [d]) \otimes \cdots \otimes \ca(t_{N,k} \mid k \in [d]).
\end{equation*}
Moreover there is a canonical $*$-epimorphism
\[
\Phi \colon  \ca(t_{1,k} \mid k \in [d]) \otimes \cdots \otimes \ca(t_{N,k} \mid k \in [d]) \to \O(\La^*_1) \otimes \cdots \otimes \O(\La^*_N)
\]
implementing the correspondence
\begin{equation}\label{eq:map}
\N\O(\La^*) \ni V_{\de_i(k)} \mapsto \Phi(\wh{t}_{i,k}) \in \O(\La^*_1) \otimes \cdots \otimes \O(\La^*_N).
\end{equation}

%%%%%%%%%%%%%%%%%%%%%%%%%%%%%%%%
\begin{corollary}\label{C:inbetween}
Let $\La^*$ be an FL in $\fdn$ and let $\La^*_1, \dots, \La^*_N$ be its projections.
Then the bijection in equation (\ref{eq:map}) induces a $*$-isomorphism between $\N\O(\La^*)$ and $\O(\La^*_1) \otimes \cdots \otimes \O(\La^*_N)$ if and only if $\La^* = \La^*_1 \times \cdots \times \La^*_N$.
\end{corollary}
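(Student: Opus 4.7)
\emph{Proof proposal.} For the ``if'' direction, assume $\La^* = \La^*_1 \times \cdots \times \La^*_N$ and set $B := \O(\La^*_1) \otimes \cdots \otimes \O(\La^*_N)$. The plan is to exhibit a Nica-covariant CNP representation of $X(\La^*)$ inside $B$ that is injective on the coefficient algebra and admits a $\bT^N$-gauge action; Theorem~\ref{T:GIUT} will then convert it into a $*$-isomorphism $\N\O(\La^*) \simeq B$ that extends the correspondence in (\ref{eq:map}). Under the product hypothesis, the equivalence $\sim$ splits coordinate-wise, giving $\Om(\La^*) = \prod_i \Om(\La^*_i)$ and hence the tensor factorization $A(\La^*) \simeq A(\La^*_1) \otimes \cdots \otimes A(\La^*_N)$. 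Let $(\pi_i,t_i)$ be a universal Cuntz-Pimsner representation of $X(\La^*_i)$ with gauge action $\be^{(i)}$, and set $\hat\pi := \pi_1 \otimes \cdots \otimes \pi_N$ and $\hat V_{\de_i(k)} := 1 \otimes \cdots \otimes t_i(\de_i(k)) \otimes \cdots \otimes 1$.

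The three items to verify are: (a) Nica-covariance of $(\hat\pi,\hat V)$ --- joins in $\fdn$ are coordinate-wise, so under the product hypothesis $\umu \vee \unu \in \La^*$ iff $\mu_i \vee \nu_i \in \La^*_i$ for every $i$, and both sides of the Nica relation tensor-decompose to the rank-one Nica relations in each $\O(\La^*_i)$; (b) the bimodule identity $\hat\pi(a)\hat V_{\de_i(k)} = \hat V_{\de_i(k)} \hat\pi(\al_{\de_i(k)}(a))$, which under the tensor factorization acts non-trivially only on the $i$th coordinate and reduces to its rank-one analogue; and (c) the CNP covariance, for which one shows via the tensor factorization of $A(\La^*)$ that $\J_F(\La^*) \simeq \bigotimes_{i \in F}(\ker\phi_i)^\perp \otimes \bigotimes_{i \notin F} A(\La^*_i)$, that the $F^\perp$-invariance condition defining $\I_F$ is automatic on such tensors, and that the alternating sum $\sum_{\un{n}\leq \un{1}_F}(-1)^{|\un{n}|} \hat\psi_{\un{n}}(\hat\phi_{\un{n}}(a))$ factors as a product, indexed by $i \in F$, of rank-one Katsura alternating sums that each vanish. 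Minimality of the tensor product makes $\hat\pi$ injective, and $\be^{(1)} \otimes \cdots \otimes \be^{(N)}$ supplies the $\bT^N$-gauge action; Theorem~\ref{T:GIUT} then yields the required isomorphism onto $B$, with surjectivity being immediate since $\hat\pi(A(\La^*))$ together with the $\hat V_{\de_i(k)}$ generates $B$.

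For the ``only if'' direction, assume the map in (\ref{eq:map}) extends to a $*$-isomorphism and, towards a contradiction, pick $\umu = (\mu_1,\dots,\mu_N) \in (\La^*_1 \times \cdots \times \La^*_N) \setminus \La^*$ with $\sum_i |\mu_i|$ minimal. We have $|\supp \umu| \geq 2$: indeed, if $\supp \umu \subseteq \{i\}$, any $\unu \in \La^*$ with $\nu_i = \mu_i$ contains $\de_i(\mu_i) = \umu$ as a subword, so factoriality places $\umu$ in $\La^*$. Choose a non-trivial partition $\supp \umu = S_1 \sqcup S_2$ and set $\umu_j := \ast_{i \in S_j} \de_i(\mu_i)$; each $\umu_j$ belongs to $\prod_i \La^*_i$ and has strictly smaller total length, so $\umu_j \in \La^*$ by minimality. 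Since $\supp \umu_1 \perp \supp \umu_2$ and $\umu_1 \vee \umu_2 = \umu \notin \La^*$, item~(i) of the representation proposition for $\La^*$ forces $V_{\umu_1} V_{\umu_1}^* V_{\umu_2} V_{\umu_2}^* = 0$ in $\N\O(\La^*)$. On the other hand its image in $B$ equals $\bigotimes_{i \in \supp \umu} t_i(\mu_i) t_i(\mu_i)^* \otimes \bigotimes_{i \notin \supp \umu} 1$, which is non-zero because $\mu_i \in \La^*_i$ and $\pi_i$ is faithful on $A(\La^*_i)$. This contradicts injectivity of the extension.

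\emph{Main obstacle.} The technical heart of the ``if'' direction is step~(c): explicitly identifying $\I_F(\La^*)$ through the tensor factorizations of $A(\La^*)$ and of the fibers $X_{\un{n}}(\La^*)$, so that the alternating CNP sum really factors coordinate-wise. This is the point at which $\La^* = \prod_i \La^*_i$ is indispensable, since otherwise neither the kernels $\ker \phi_{\bo{i}}$ nor the $F^\perp$-invariance defining $\I_F$ respect any would-be tensor factorization. Everything else --- Nica-covariance, bimodule compatibility, the gauge action, the obstruction word and the contradiction in the ``only if'' direction --- reduces cleanly to rank-one facts plus a short minimality argument.
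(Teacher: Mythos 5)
Your overall architecture for the ``if'' direction is the same as the paper's (exhibit a gauge-equivariant CNP-representation inside the tensor product that is injective on $A$, then invoke Theorem~\ref{T:GIUT}), but the one step you defer to -- your step~(c) -- is exactly where your sketch goes wrong. The claimed identification $\J_F(\La^*) \simeq \bigotimes_{i \in F}(\ker\phi_i)^\perp \otimes \bigotimes_{i \notin F} A(\La^*_i)$ is false: since the left actions are by compacts, $\J_F$ is the annihilator in $A = A_1 \otimes \cdots \otimes A_N$ of $\bigcap_{i\in F}\ker\phi_{\Bi} = \bigotimes_{i\in F}\ker\phi_i \otimes \bigotimes_{i\notin F}A_i$, and the annihilator of a tensor product of ideals is strictly larger than the tensor product of the annihilators (already for $A_1=A_2=\bC^2$ with $K_i = \bC e_1$, the annihilator of $K_1\otimes K_2$ is three-dimensional while $K_1^\perp\otimes K_2^\perp$ is one-dimensional). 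Consequently the alternating CNP sum $\pi(a)\prod_{i\in F}(I-p_{\Bi})$ does \emph{not} factor into rank-one Katsura sums ``that each vanish'': for a typical $a\in\I_F$ (e.g.\ $a = q_{\mt,1}\otimes(1-q_{\mt,2})$ when $F=\{1,2\}$) only some tensor factors vanish, and your proposed mechanism does not cover all of $\I_F$, so the CNP condition would only be verified on a proper subideal. The correct route -- the one the paper takes -- is to use the rank-one dichotomy of \cite{KS15}: each $\ker\phi_{\bo{i}}$ is either $(0)$ or $\bC q_{\mt,i}$. If every $i\in F$ falls in the second case then $Q_F = \bigotimes_{i\in F}q_{\mt,i}\otimes\bigotimes_{i\notin F}I$ lies in $A$, Proposition~\ref{P:I_F Q_F} gives $\I_F = A(I-Q_F)$, and $\pi(a)\Phi(Q_F)=0$ is immediate; if some $i\in F$ has $\ker\phi_{\bo{i}}=(0)$ then $\I_F = A$ but the corresponding factor of $Q_F$ is a compact projection killed in $\O(\La^*_i)=\ca(t_i)/\K$, so $\Phi(Q_F)=0$ outright. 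Without this case split your step~(c) cannot be completed as described.

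Your ``only if'' direction is a genuinely different and valid argument from the paper's: you take a minimal-length word $\umu\in\prod_i\La^*_i\setminus\La^*$, split its support, and derive a contradiction from Nica-covariance ($V_{\umu_1}V_{\umu_1}^*V_{\umu_2}V_{\umu_2}^*=0$ while its image in the tensor product is a nonzero projection), whereas the paper simply reads off $\umu\in\La^* \Leftrightarrow V_{\umu}^*V_{\umu}\neq 0$ and uses injectivity on $A_1\otimes\cdots\otimes A_N$ directly. The paper's version is shorter and stays inside the coefficient algebra, where injectivity is already known; yours needs the additional (true, but worth justifying via $t_i(\mu_i)^*t_i(\mu_i)\neq 0$ rather than ``$\pi_i$ is faithful on $A(\La^*_i)$'', since $t_i(\mu_i)t_i(\mu_i)^*$ is a range projection, not an element of $A(\La^*_i)$) observation that the range projections survive in the minimal tensor product.
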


\begin{proof}
First suppose that $\La^* = \La^*_1 \times \cdots \times \La^*_N$ so that $\ca(T) = \ca(t_1) \otimes \cdots \otimes \ca(t_N)$.
To fix notation write $\{\al_{i, k}\}_{k \in [d]}$ for the quantized dynamics on $A_i$ and note that $A = A_1 \otimes \cdots \otimes A_N$.
For $i \in [N]$ we then have
\[
\al_{\de_i(k)} =  \id \otimes \cdots \otimes \id \otimes \underbrace{\al_{i, k}}_{\textup{$i$-th position}} \otimes \id \otimes \cdots \otimes \id.
\]
Since the $A_i$ are commutative we get
\[
\ker \al_{\de_i(k)} = A_1 \otimes \cdots \otimes A_{i -1} \otimes \ker\al_{i,k} \otimes A_{i+1} \otimes \cdots \otimes A_N.
\]
Without loss of generality, up to rearranging terms, there's an $\ell \in [N]$ such that
\[
\ker \phi_{\bo{i}} = \bigcap_{k \in [d]} \ker \al_{i, k} \neq (0) \text{ for } i \leq \ell
\qand
\ker \phi_{\bo{i}} = \bigcap_{k \in [d]} \ker \al_{i, k} = (0) \text{ for } i \geq \ell + 1.
\]
Then by \cite[Theorem 6.1]{KS15} we derive that
\[
\O(\La^*_i) 
= 
\begin{cases}
\ca(t_{i,k} \mid k \in [d]) & \textup{if } i \leq \ell,\\
\ca(t_{i,k} \mid k \in [d]) / \K & \textup{if } i \geq \ell+1.
\end{cases}
\]
We also fix the notation
\[
q_{\mt, i} = I - \sum_{k \in [d]} t_{i,k} t_{i,k}^* \in A_i \qfor i \leq \ell.
\]
Then we have to show that
\[
\N\O(\La^*) \simeq \ca(t_1) \otimes \ca(t_\ell) \otimes \ca(t_{\ell+1})/\K \otimes \cdots \otimes \ca(t_{N})/\K.
\] 
Take the quotient map
\[
\Phi \colon \ca(T) \to \ca(t_1) \otimes \ca(t_\ell) \otimes \ca(t_{\ell+1})/\K \otimes \cdots \otimes \ca(t_{N})/\K.
\]
As $(\id, T)$ forms a Nica-Pimsner representation that admits a gauge action then so does $(\Phi|_A, \Phi T)$.
In particular $\Phi|_A$ is faithful as the tensor product of injective representations.
It suffices to show that $\Phi(a) \Phi(Q_F) = 0$ for all $a \in \I_F$, and then the GIUT finishes the proof.
First suppose that $F \subseteq \{1, \dots, \ell\}$ and without loss of generality assume that $F = \{1, \dots, m\}$ for $m \leq \ell$.
Then by definition we have
\[
Q_F = q_{\mt, 1} \otimes \cdots \otimes q_{\mt, m} \otimes I \otimes \cdots \otimes I
\]
and thus $Q_F \in A$. 
Hence Proposition \ref{P:I_F Q_F} gives that $\I_F = A (I - Q_F)$.
Thus if $a \in\ \I_F$ then
\[
\Phi(a) \Phi(Q_F) = \Phi(a (I - Q_F)) \Phi(Q_F) = 0.
\]
On the other hand suppose that $F$ contains an element from $\{\ell+1, \dots, N\}$ and without loss of generality suppose that $N \in F$.
It then follows that $\bigcap_{k \in [d]} \ker \al_{\de_{N}(k)} = (0)$ and so $\bigcap_{i \in F} \ker \phi_{\Bi} = (0)$.
Hence $\I_F = A$.
At the same time we have that $q_{\mt, N} \in \K$ and thus $\Phi(I - P_{N}) = 0$.
Therefore $\Phi(Q_F) = \Phi(Q_F) \Phi_F(I - P_{N}) = 0$, and trivially $\Phi(a) \Phi(Q_F) = 0$ for all $a \in A = \I_F$.

Conversely, suppose that the correspondence of equation (\ref{eq:map}) extends to a $*$-isomor\-phism.
Then for $\umu = (\mu_1, \dots, \mu_N) \in \fdn$ we have that:
\[
\umu \in \La^* 
\iff V_{\umu}^* V_{\umu} \neq 0 
\iff \Phi(t_{\mu_1}^* t_{\mu_1} \otimes \cdots \otimes t_{\mu_N}^* t_{\mu_N}) \neq 0.
\]
However $\Phi$ is injective on $A_1 \otimes \cdots \otimes A_N$ and thus the above is equivalent to $t_{\mu_i}^* t_{\mu_i} \neq 0$ for all $i \in [N]$, which holds if and only if $\mu_i \in \La^*_i$ for all $i \in [N]$.
Thus $\La^*$ coincides with $\La^*_1 \times \cdots \times \La^*_N$.
\end{proof}

Now let us examine the sofic case further.
Henceforth, let $\La^*$ be a sofic FL and denote by $(\La, \fL)$ its higher rank follower set graph as in Definition \ref{D:fsg}.
That is, $\La$ is the ambient higher rank graph and $\fL$ is the labeling.
By Remark \ref{R:fsg} there is a one-to-one correspondence $v \mapsto p_v$ between the vertices of $\La$ and the minimal projections in the algebra $A$ of the checkers.
We wish to connect $\N\O(\La^*)$ with $\ca(\La)$.
To allow comparisons we will denote by $\I_F(\La^*)$ the ideals of solutions for $X(\La^*)$.

%%%%%%%%%%%%%%%%%%%%%%%%%%%%%%%%
\begin{proposition}\label{P:cnp fsg}
Let $\La^*$ be a sofic FL and let $(\La, \fL)$ be its higher rank follower set graph.
Let $\mt \neq F \subseteq [N]$.
If a vertex $v$ of $\La$ is $F$-tracing then its corresponding projection $p_v$ is in $\I_F(\La^*)$.
\end{proposition}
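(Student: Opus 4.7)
The plan is to establish this by a direct comparison between the vertex-level description of the ideals $\I_F(\La^*)$ and the $F$-tracing condition, using the duality from Remark \ref{R:fsg} between the checkers algebra $A = C(\Om)$ and the vertices of the follower set graph $(\La,\fL)$. First I would note that the left actions $\phi_{\bo{i}}$ on $X(\La^*)$ are by compacts, which makes the intersection $\bigcap \{\phi_{\un{n}}^{-1}(\K X_{\un{n}}) \mid \un{n} \leq \un{1}\}$ equal to $A$. Consequently, in this setting we have $\J_F = (\bigcap_{i \in F}\ker \phi_{\bo{i}})^\perp$.

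Next I would identify $\J_F$ vertex-by-vertex. Since $\La^*$ is sofic, $\Om$ is finite and the minimal projections $\{p_w\}_{w\in\La^{\un{0}}}$ exhaust the atoms of $A$. Using Remark \ref{R:fsg}, $\al_{\delta_i(k)}(p_w) = \sum \{p_{s(e)} \mid r(e) = w,\ \fL(e) = \delta_i(k)\}$, so $p_w \in \ker\phi_{\bo{i}}$ precisely when $w\La^{\bo{i}} = \emptyset$. Hence $p_w$ belongs to $\J_F$ if and only if $w \La^{\bo{i}} \neq \emptyset$ for at least one $i \in F$.

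Now I would verify the covariance requirement for $p_v$. For any $\umu \in \fdn$ with $\supp \umu \perp F$, if $\umu \notin \La^*$ then $\al_\umu(p_v) = 0 \in \J_F$. Otherwise, since $\La^* = \fL(\La)$ by Proposition \ref{P:fsg}, Remark \ref{R:fsg} yields the orthogonal decomposition
\[
\al_\umu(p_v) = \sumoplus \{ p_{s(\nu)} \mid r(\nu) = v,\ \fL(\nu) = \umu \},
\]
where the sources are all distinct by the source-resolving property. Each such $\nu$ satisfies $d(\nu) = |\umu| \perp F$ and $r(\nu) = v$. As $v$ is $F$-tracing, this forces $s(\nu) \La^{\bo{i}} \neq \emptyset$ for some $i \in F$, and hence $p_{s(\nu)} \in \J_F$. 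Taking the trivial path $\la = v$ also shows $p_v \in \J_F$ itself. Therefore $\al_\umu(p_v) \in \J_F$ for every $\umu$ with $\supp\umu \perp F$, giving $p_v \in \I_F(\La^*)$.

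I do not expect any serious obstacle; the main point is simply to unpack the definition of $\I_F$ in terms of the quantized dynamics (as given in the FL section) and to translate the $F$-tracing condition through the correspondence of Remark \ref{R:fsg}. The finiteness part of the $F$-tracing definition is automatic in the sofic setting since $\Om$ is finite, so it plays no active role in the argument.
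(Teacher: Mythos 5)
Your proof is correct and follows essentially the same route as the paper's: both first identify $\J_F(\La^*)$ atom-by-atom via the correspondence $v \mapsto p_v$ of Remark \ref{R:fsg} (using that the left actions are by compacts so only the annihilator condition matters), and then verify $\al_{\umu}(p_v) \in \J_F(\La^*)$ for $\supp\umu \perp F$ by decomposing it as the sum of $p_{s(\nu)}$ over paths $\nu$ with range $v$ and label $\umu$ and invoking the $F$-tracing property at each source. Your explicit remark that the finiteness clause of $F$-tracing is vacuous here is a correct (and harmless) addition that the paper leaves implicit.
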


\begin{proof}
Since $v$ is $F$-tracing then there is an $i \in F$  and an edge $(e,i,k)$ with $r(e) = v$.
By construction $\al_{\de_i(k)}(p_v) \geq p_{s(e)} \neq 0$ showing that $p_v \notin \bigcap_{i \in F} \ker\phi_{\Bi}$.
Since $A$ is discrete we get that $p_v \in \J_F(\La^*)$.

Now take $\lambda \in \La$ such that $d(\la) \perp F$ and apply $\al_{\fL(\la)}$ on $p_v$.
If $c = \al_{\fL(\la)}(p_v) \neq 0$ then it corresponds to the sum of $p_{s(\nu)}$ for the paths $\nu$ ending at $v$ with $\fL(\nu) = \fL(\la)$.
For such a $\nu$ we get that $d(\nu) = d(\la)$ and thus $d(\nu) \perp F$ as well.
As $v$ is $F$-tracing, the vertex $s(\nu)$ must receive an edge $(f,i,k)$ for some $i \in F$.
The comments above show that $p_{s(\nu)} \in \J_F(\La^*)$.
As this holds for all $\nu$ we get that $c \in \J_F(\La^*)$ and thus $p_v \in \I_F(\La^*)$.
\end{proof}

We now arrive at the conclusion of this section which connects C*-algebras of higher rank graphs with those of FL's through the follower set graph construction. 
This result is in analogy with \cite{Car03} and \cite{KS15} for the single variable case.

%%%%%%%%%%%%%%%%%%%%%%%%%%%%%%%%
\begin{theorem}\label{T: fsg}
Let $\La^*$ be a sofic FL and let $(\La, \fL)$ be its follower set graph.
Then there is a canonical $*$-isomorphism between $\N\O(\La^*)$ and $\ca(\La)$.
\end{theorem}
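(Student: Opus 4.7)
The plan is to construct a canonical isomorphism of product systems $X(\La^*) \simeq X(\La)$ so that the induced isomorphism of Cuntz-Nica-Pimsner algebras $\N\O(X(\La^*)) \simeq \N\O(X(\La))$, combined with the identification $\N\O(X(\La)) \simeq \ca(\La)$ furnished by Theorem \ref{T:cnp is ck}, yields the result. Since $\La^*$ is sofic, $\Omega$ is finite and the coefficient algebra $A$ is finite-dimensional with minimal projections $\{p_{[\unu]}\}_{[\unu] \in \Omega}$ in bijection with $\La^{\un{0}}$. Moreover $\La$ is finite and hence trivially strong finitely aligned by Proposition \ref{P:sca-graph}, so Theorem \ref{T:cnp is ck} applies.

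First I would define $\Phi_{\bo{i}} \colon X_{\bo{i}}(\La) \to X_{\bo{i}}(\La^*)$ on the canonical basis by $\de_e \mapsto T_{\fL(e)} p_{s(e)}$ for each edge $e \in \La^{\bo{i}}$, and extend by linearity. The source-resolving property of $(\La, \fL)$ ensures that any edge is determined by its source together with its label, so $\Phi_{\bo{i}}$ sends distinct basis elements to distinct nonzero elements. Conversely, each generator $T_{\delta_i(k)} p_v$ of $X_{\bo{i}}(\La^*)$ is either zero or equals $\Phi_{\bo{i}}(\de_e)$ for the unique edge $e$ with $s(e) = v$ and $\fL(e) = \delta_i(k)$. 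The intertwining of $A$-bimodule actions and inner products is a direct computation: using Remark \ref{R:fsg} and the identity $T_{\delta_i(k)}^* T_{\delta_i(\ell)} = \delta_{k,\ell} T_{\delta_i(k)}^* T_{\delta_i(k)}$, one obtains
\[
\sca{T_{\delta_i(k)} p_v,\ T_{\delta_i(\ell)} p_{v'}} \;=\; \delta_{k, \ell}\, \delta_{v, v'}\, p_v
\]
whenever $v$ admits an edge of color $i$ labeled $\delta_i(k)$, matching the inner product $\sca{\de_e, \de_f} = \de_{e,f}\, p_{s(e)}$ in $X_{\bo{i}}(\La)$.

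Next, I would extend $\Phi$ to higher-degree fibers through composition of paths, and verify that $\Phi := \bigoplus_{\un{n}} \Phi_{\un{n}}$ intertwines the product system multiplications. In $X(\La)$ the multiplication sends $\de_\la \otimes \de_\mu \mapsto \de_{\la \mu}$ (with the zero convention when $s(\la) \neq r(\mu)$), while in $X(\La^*)$ one has $T_{\umu} p_u \cdot T_{\unu} p_v = T_{\umu \ast \unu} \al_{\unu}(p_u) p_v$, which by Remark \ref{R:fsg} picks out precisely the concatenated edge-path with compatible source-range. Unique factorization in $\La$ then ensures the two multiplications correspond, so $\Phi$ assembles into an isomorphism of product systems over $A$.

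The induced map on universal C*-algebras is a $*$-isomorphism $\N\O(X(\La^*)) \simeq \N\O(X(\La))$, as the ideals $\I_F$ and the Cuntz-Nica-Pimsner relations transport naturally under the product system isomorphism. Combined with Theorem \ref{T:cnp is ck}, which identifies $\N\O(X(\La))$ with $\ca(\La)$, this chain of isomorphisms gives the canonical $*$-isomorphism between $\N\O(\La^*)$ and $\ca(\La)$. The main obstacle is the bookkeeping required to extend $\Phi_{\un{n}}$ coherently to the higher-degree fibers and to check compatibility with the product system multiplication in all directions; however, this reduces to unique factorization in $\La$ together with source-resolving, which jointly identify each nonzero $T_{\umu} p_v$ with a unique edge-path in $\La$ of label $\umu$ and source $v$.
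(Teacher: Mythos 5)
Your proposal is correct, but it takes a genuinely different route from the paper. You build a unitary equivalence of product systems $X(\La) \simeq X(\La^*)$ over the identification $c_0(\La^{\un{0}}) \simeq A = C(\Om)$, sending $\de_e \mapsto T_{\fL(e)}p_{s(e)}$, and then transport the CNP relations --- which are intrinsic to the product system, since the ideals $\I_F$ are defined from the maps $\phi_{\un{n}}$ alone --- to obtain $\N\O(\La^*) \simeq \N\O(X(\La)) \simeq \ca(\La)$. The paper instead works at the level of representations: it starts from a CNP-representation $(\pi,V)$ of $X(\La^*)$, defines $t_\la := V_{\fL(\la)}\pi(p_{s(\la)})$, and verifies by hand that $\{t_\la\}_{\la\in\La}$ is a Cuntz--Krieger $\La$-family generating the same C*-algebra; the bulk of that work is the (NC) relation, checked via Nica-covariance of $V$ together with source-resolving, and the (CK') relation via Proposition \ref{P:cnp fsg}, after which Theorem \ref{T:cnp is ck} closes the argument. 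Your approach front-loads the same combinatorial inputs (uniqueness of an edge with prescribed source and label, Remark \ref{R:fsg} for the action of $\al$ on minimal projections, unique factorization in $\La$) into the product-system isomorphism, after which Nica-covariance and the CNP relations transport for free; the paper's approach avoids constructing the isomorphism of the whole family $\{X_{\un{n}}\}$ but pays for it with the explicit (NC) computation over $\La^{\min}(\la,\mu)$. Both are sound. If you write yours up, do record explicitly the check that $T_{\fL(\la)}p_{s(\la)}\cdot T_{\fL(\mu)}p_{s(\mu)} = 0$ when $s(\la)\neq r(\mu)$ (this is precisely where source-resolving enters, since $\al_{\fL(\mu)}(p_{s(\la)})p_{s(\mu)}\neq 0$ would produce a second path out of $s(\mu)$ with label $\fL(\mu)$), and the identification of coefficient algebras via $\La^{\un{0}} = \Om$.
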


\begin{proof}
Since the follower set graph is row-finite its Cuntz-Krieger $\La$-families coincide with the CNP-representations.
Let $(\pi,V)$ correspond to a CNP-representation of $X(\La^*)$.
We define the family of operators $\{t_\la \mid \la \in \La\}$ by
\[
t_{\la}
:=
\begin{cases}
\pi(p_{v}) & \textup{ if $\la = v$ is a vertex}, \\
V_{\fL(\la)} \pi(p_{s(\la)}) & \textup{ otherwise}.
\end{cases}
\]
Since $A$ is generated by the minimal projections $\pi(p_v)$ (and they add up to the identity) we get that
\[
\ca(\pi, V)
=
\ca(V_{\umu}, \pi(p_v) \mid \umu \in \La^*, v \in \La^{\un{0}})
=
\ca(t_\la \mid \la \in \La).
\]
It suffices to show that $\{t_\la \mid \la \in \La\}$ is a Cuntz-Krieger $\La$-family.
Then Theorem \ref{T:cnp is ck} applies to give the $*$-isomorphism between $\N \O (\La^*)$ and $\ca(\La)$.

It is clear that $\{t_v \mid v \in \La^{\un{0}}\}$ is a set of pairwise disjoint projections.
For (HR), let $\la, \mu \in \La$ such that $r(\mu) = s(\la)$ and compute
\begin{align*}
t_\la t_\mu
& =
V_{\fL(\la)} \pi(p_{s(\la)}) V_{\fL(\mu)} \pi(p_{s(\mu)}) 
 =
V_{\fL(\la) \ast \fL(\mu)} \pi( \al_{\fL(\mu)}(p_{s(\la)}) p_{s(\mu)}) 
 =
V_{\fL(\la\mu)} \pi(p_{s(\la\mu)})
=
t_{\la \mu},
\end{align*}
where we used that $\alpha_{\fL(\mu)}(p_{s(\la)}) \geq p_{s(\mu)}$ and $s(\la\mu) = s(\mu)$.
For (NC), let $\la, \mu \in \La$ and consider two cases.
If $\fL(\la) \vee \fL(\mu) = \infty$ then $\La^{\min}(\la,\mu) = \emptyset$.
Therefore in this case we get
\[
t_\la^* t_\mu = p_{s(\la)} V_{\fL(\la)}^* V_{\fL(\mu)} p_{s(\mu)} = 0.
\]
Now suppose that $\fL(\la) \vee \fL(\mu) < \infty$ and let paths $x, y$ in $\La$ such that
\[
\fL(\la) \ast \fL(x) = \fL(\la) \vee \fL(\mu) = \fL(\mu) \ast \fL(y).
\]
At this point we do not claim that $\la x$ or $\mu y$ exist.
We can use Nica-covariance of $V$ to deduce that
\begin{align*}
V_{\fL(\la)}^* V_{\fL(\mu)}
& =
V_{\fL(\la)}^* V_{\fL(\la)} V_{\fL(\la)}^* V_{\fL(\mu)} V_{\fL(\mu)}^* V_{\fL(\mu)} 
 =
V_{\fL(\la)}^* V_{\fL(\la) \vee \fL(\mu)} V_{\fL(\la) \vee \fL(\mu)}^* V_{\fL(\mu)} \\
& =
\pi( T_{\fL(\la)}^* T_{\fL(\la)}) V_{\fL(x)} V_{\fL(y)}^* \pi(T_{\fL(\mu)}^* T_{\fL(\mu)}) 
=
V_{\fL(x)} \pi(a) V_{\fL(y)}^*
\end{align*}
where the last equality uses commutation up to applying $\alpha$, for
\[
a := \al_{\fL(x)}(T_{\fL(\la)}^* T_{\fL(\la)}) = T_{\fL(\la) \vee \fL(\mu)}^* T_{\fL(\la) \vee \fL(\mu)} = \al_{\fL(y)}(T_{\fL(\mu)}^* T_{\fL(\mu)}).
\]
We thus get that
\begin{align*}
t_{\la}^* t_\mu
& =
\pi(p_{s(\la)}) V_{\fL(x)} \pi(a) V_{\fL(y)}^* \pi(p_{s(\mu)}) 
 =
V_{\fL(x)} \pi(\al_{\fL(x)}(p_{s(\la)}) \cdot  a \cdot \al_{\fL(y)}(p_{s(\mu)})) V_{\fL(y)}^*.
\end{align*}
By construction we have
\begin{align*}
& \al_{\fL(x)}(p_{s(\la)}) \cdot  a \cdot \al_{\fL(y)}(p_{s(\mu)})
=\\
& \quad =
\sumoplus \{ p_{s(\nu_1)} p_{s(\nu)} p_{s(\nu_2)} \mid \nu_1, \nu_2, \nu \in \La \textup{ such that } \\
& \hspace{5cm} r(\nu_1) = s(\la), r(\nu_2) = s(\mu), \AND \\
& \hspace{5cm} \fL(\nu_1) = \fL(x), \fL(\nu_2) = \fL(y), \fL(\nu) = \fL(\la) \vee \fL(\mu) \} 
\end{align*}
Clearly $p_{s(\nu_1)} p_{s(\nu)} p_{s(\nu_2)}$ is non-zero exactly when $s(\nu_1) = s(\nu) = s(\nu_2)$.

Now there are two cases. 
First suppose that $\La^{\min}(\la,\mu) = \emptyset$.
As $(\La,\fL)$ is source-resolving, if $s(\nu_1) = s(\nu) = s(\nu_2)$ then we must have that $\nu = \la \nu_1 = \mu \nu_2$ and hence $(\nu_1,\nu_2) \in \La^{\min}(\la,\mu)$ which is a contradiction. 
Thus, all the summands above are zero giving that $t_\la^* t_\mu = 0$ as required.

Now suppose that $\La^{\min}(\la,\mu) \neq \emptyset$.
Then there are $\nu_1, \nu_2, \nu \in \La$ such that $s(\nu_1) = s(\nu) = s(\nu_2)$. Again since $(\La,\fL)$ is source-resolving, we must have that $\nu = \la \nu_1 = \mu \nu_2$ with $(\nu_1,\nu_2) \in \La^{\min}(\la,\mu)$. Indeed, we have that
\[
\fL(\la) \ast \fL(\nu_1) = \fL(\la) \ast \fL(x) = \fL(\la) \vee \fL(\mu) = \fL(\nu)
\]
and likewise for $\fL(\mu) \ast \fL(\nu_2)$.
As the degree of a path agrees with the multilength of its labeling, we have that $(\nu_1, \nu_2) \in \La^{\min}(\la,\mu)$.
Therefore we obtain that
\begin{align*}
t_{\la}^* t_\mu
& =
\sum_{\nu_1, \nu_2} \{ V_{\fL(x)} p_{s(\nu_1)} p_{s(\nu_2)} V_{\fL(y)}^* 
\mid \fL(\nu_1) = \fL(x), \ \fL(\nu_2) = \fL(y), \ (\nu_1, \nu_2) \in \La^{\min}(\la,\mu) \}
\\
& =
\sum \{t_{\nu_1} t_{\nu_2}^* \mid (\nu_1, \nu_2) \in \La^{\min}(\la, \mu)\}
\end{align*}
and thus (NC) is satisfied.
Finally we show that the family satisfies (CK') and then Theorem \ref{T:cnp is ck} will conclude that $\{V_{\fL(\la)} \pi(p_{s(\la)}) \mid \la \in \La\}$ is a Cuntz-Krieger $\La$-family.
To this end, by Proposition \ref{P:cnp fsg} it suffices to show that 
\begin{equation}\label{eq:show cnp}
\prod \{ t_v - t_{\mu} t_{\mu}^* \mid \mu \in v \La^{\Bi}\}
=
\pi(p_v) \prod_{i \in F} (I - p_{\Bi})
\end{equation}
for any $F$-tracing vertex $v$. Notice that 
\[
\prod \{ t_v - t_{\mu} t_{\mu}^* \mid \mu \in v \La^{\Bi}\}
=
t_v - \sum\{ t_{\mu} t_\mu^* \mid \mu \in v \La^\Bi \}.
\]
Since $p_v$ commutes with $p_{\Bi}$ we get
\[
\pi(p_v) \prod_{i \in F} (I - p_{\Bi})
=
\prod_{i \in F} (\pi(p_v) - \pi(p_v) p_{\Bi}).
\]
By construction we have that $p_{\Bi} = \sum_{k \in [d]} V_{\de_i(k)} V_{\de_i(k)}^*$ and so $\pi(p_v) p_{\Bi} \neq 0$ if and only if $v\La^\Bi \neq \mt$.
Thus, we need to verify that
\[
\sum\{ t_{\mu} t_\mu^* \mid \mu \in v \La^\Bi \}
=
\pi(p_v) \sum_{k \in [d]} V_{\de_i(k)} V_{\de_i(k)}^*
\]
for the $i \in F$ such that $v\La^\Bi \neq \mt$, and then equation (\ref{eq:show cnp}) will hold.
However a straightforward computation gives that
\begin{align*}
\sum\{ t_{\mu} t_\mu^* \mid \mu \in v \La^\Bi \}
& =
\sum_{k \in [d]} \sum\{ V_{\de_{i}(k)} \pi(p_{s(\mu)}) V_{\de_i(k)}^* \mid \mu \in v\La^\Bi \textup{ s.t. } \fL(\mu) = \de_i(k) \} \\
& =
\sum_{k \in [d]} V_{\de_{i}(k)} \pi(\al_{\de_i(k)}(p_v)) V_{\de_i(k)}^* 
=
\pi(p_v) \sum_{k \in [d]} V_{\de_i(k)} V_{\de_i(k)}^*,
\end{align*}
and the proof is complete.
\end{proof}

%%%%%%%%%%%%%%%%%%%%%%%%%%%%%%%%
\section{Future directions}\label{S:que}
%%%%%%%%%%%%%%%%%%%%%%%%%%%%%%%%

A number of results in C*-correspondences rely on GIUT and now we have simpler relations that we can use to pursue the higher rank analogues. Let us describe here some problems that can follow in subsequent works.

%%%%%%%%%%%%%%%%%%%%%%%%%%%%%%%%
\begin{question}
Our setting has re-formulations in the context of other semigroups.
It is possible that an algebraic description of $\N\O(X)$ for strong compactly product systems may still be carried over by using a more general setting, such as in Sims--Yeend \cite{SY11}. 
The main difference with $\bZ_+^N$ is that for general $P$ there may be no canonical generators and thus no reduction to relations in terms of subsets of the $\un{1}$-cube.
\end{question}

%%%%%%%%%%%%%%%%%%%%%%%%%%%%%%%%
\begin{question}
In \cite{Kat07}, Katsura uses his insight from \cite{Kat04} to characterize gauge invariant ideals of $\T_X$ from ideals of the coefficient algebra $A$ (see also \cite{Kak14} for gauge invariant ideals inside the ideal of Katsura's relations). 
It is plausible that such techniques can be carried over to our setting. 
An important technique to try to generalize is the tail adding technique used in \cite{KK06, DFK14}.
It was first established by Muhly-Tomforde \cite{MT04} and later extended by Kakariadis-Katsoulis \cite{KK11}. 
This technique is useful for establishing Morita equivalence with C*-algebras associated to an injective system, which then gives a correspondence between ideal lattices. 
\end{question}

%%%%%%%%%%%%%%%%%%%%%%%%%%%%%%%%
\begin{question}
The \emph{tensor algebra} of Muhly-Solel \cite{MS98} is the non-involuti\-ve algebra generated by a C*-correspondence inside its Toeplitz-Pimsner algebra.
Muhly-Solel \cite{MS00} have imported the notion of strong Morita equivalence to the context of C*-correspondences.
Under injectivity of the action, they have shown that it implies strong Morita equivalence of their Pimsner and tensor algebras (in the sense of Blecher-Muhly-Paulsen \cite{BMP00}) of the tensor algebras.
The injectivity condition has been removed by Eleftherakis-Kakariadis-Katsoulis \cite{EKK16} leading to strong Morita equivalence of the tensor algebras in the stronger version of Eleftherakis \cite{Ele14}.
These proofs depend on the GIUT and may extend to the context of strong compactly aligned product systems.
\end{question}

%%%%%%%%%%%%%%%%%%%%%%%%%%%%%%%%
\begin{question}
Taking motivation from Symbolic Dynamics, Muhly-Pask-Tomforde \cite{MPT08} lift the notion of strong shift equivalence to the context of regular C*-cor\-respon\-dences.
By using the GIUT they show that SSE implies strong Morita equivalence for the Cuntz-Pimsner algebras, but not for the Toeplitz-Pimsner algebras and tensor algebras.
Kakaria\-dis-Katsoulis \cite{KK12} extend this theory for shift equivalence.
It is natural to ask for the shift relations for product systems and then for the corresponding results.
\end{question}

%%%%%%%%%%%%%%%%%%%%%%%%%%%%%%%
\begin{question}
A key point in \cite{KK12} is the minimal extension of an injective C*-correspon\-dence to an essential imprimitivity bimodule.
This was established in \cite{KK11} in a similar way as to how one gets the minimal automorphic extension of an injective $*$-endomorphism.
Then it is shown in \cite{KK12} that the shift relations are stable under this extension.
The direct limit process for dynamical systems has been used effectively in \cite{DFK14} for C*-dynamical systems and expanding the theory to product systems would be desirable.
\end{question}

%%%%%%%%%%%%%%%%%%%%%%%%%%%%%%%%
\begin{question}
Now that a theory of higher-rank factorial languages and their algebras has been developed, it is natural to ask for analogous results to those of Matsumoto \cite{Mat02,Mat97}, Matsumoto-Carlsen \cite{CM04}, and Kakariadis-Shalit \cite{KS15}. 
For instance, one may ask for classification up to isometric/bounded isomorphism of various nonselfadjoint operator algebras associated to higher-rank factorial languages, and their relationship to subproduct systems in the sense of Shalit-Solel \cite{SS09}.

\end{question}

%%%%%%%%%%%%%%%%%%%%%%%%%%%%%%%%
\begin{acknow}
The first author is grateful to Ian Putnam for several discussions on sofic shifts and follower set graphs. The first author is also grateful for the support and hospitality of the Mathematics department of University of Victoria, for a visit during which work on this project was conducted.
The second author would like to thank the Isaac Newton Institute for Mathematical Sciences, Cambridge, for support and hospitality during the programme ``Operator algebras: subfactors and their applications'' where work on this paper was undertaken. 
\end{acknow}

%%%%%%%%%%%%%%%%%%%%%%%%%%%%%%%%%%%%%%%%%%%

\end{document}